\newcommand{\oh}{\mbox{$\frac{1}{2}$}}
\newcommand{\Z}{\mathcal{Z}}
\newcommand{\I}{\mathcal{I}}
\newcommand{\J}{\mathcal{J}}
\newcommand{\K}{\mathcal{K}}
\newcommand{\Ll}{\mathcal{L}}
\newcommand{\s}{\sigma}
\newcommand{\m}{{\bf m}}
\newcommand{\e}{\varepsilon}
\newcommand{\C}{{\mathbb C}}
\newcommand{\N}{{\mathbb N}}
\newcommand{\R}{{\mathbb R}}
\renewcommand{\S}{{\mathcal S}}
\newtheorem{prop}{Proposition}[section]
\newtheorem{thm}[prop]{Theorem}
\newtheorem{lem}[prop]{Lemma}
\newtheorem{defn}{Definition}
\newtheorem*{defn*}{Definition}
\newtheorem{conj}{Conjecture}
\numberwithin{equation}{section}
\begin{document}

\title[Mean values of long Dirichlet polynomials]{Mean values of long Dirichlet polynomials with higher divisor coefficients}

\thanks{This research was supported by NSERC Discovery grants RGPIN-2018-06313 of Alia Hamieh and RGPIN-2020-06032 of Nathan Ng. For part of this research, the first author was supported by the Pacific Institute for the Mathematical Sciences (PIMS) postdoctoral fellowship at the University of Lethbridge.}


\keywords{\noindent Dirichlet polynomials, mean value problems, moments of Riemann zeta function, generalized divisor functions, additive divisor sums}

\subjclass[2010]{Primary 11M06, 11M26, 11M41; Secondary 11N37, 11N75}

\author[Alia Hamieh and Nathan Ng]{Alia Hamieh and Nathan Ng}
\address{University of Northern British Columbia\\ Department of Mathematics and Statistics \\ 3333 University Way\\ Prince George, BC\ V2N4Z9\\ Canada}
\email{alia.hamieh@unbc.ca}
\address{University of Lethbridge \\ Department of Mathematics and Computer Science \\ 4401 University Drive \\ Lethbridge, AB \ T1K 3M4 \\ Canada}
\email{nathan.ng@uleth.ca}

\date{\today}
\begin{abstract}

 In this article, we 
prove an asymptotic formula for  mean values
of long Dirichlet polynomials with higher order shifted divisor functions, assuming a smoothed additive divisor conjecture
for higher order shifted divisor functions. 
As a consequence of this work, we prove special
cases of conjectures of Conrey-Keating \cite{CK3} 
on mean values of long Dirichlet polynomials
with higher order shifted divisor functions as coefficients.
 
\end{abstract}


\maketitle

\section{Introduction}\label{sec:intro}
An important field of research with a long history in analytic number theory is the study of the $2k$-th moments of the Riemann zeta
function, $\zeta(s)$.   These moments are given by 
\begin{equation}
  \label{IkT}
 I_k(T) = \int_{0}^{T} |\zeta(\tfrac{1}{2}+it)|^{2k} \, dt \text{ where } k>0 \text{ and}\; T \ge 1. 
\end{equation}
A  driving force in this field is the conjectural asymptotic
\begin{equation}
  \label{IkTasymp}
 I_k(T) \sim \frac{c_k g_k}{(k^2)!} T (\log T)^{k^2},
\end{equation}
where 
\begin{equation}
 \label{ckgk}
 c_k=\prod_{p}\Big(\Big(1-\frac{1}{p}\Big)^{k^2}\sum_{\alpha=0}^{\infty}\frac{\tau_{k}(p^{\alpha})^2 }{p^{\alpha}}\Big)
 \text{ and }
 g_k = (k^2)! \prod_{j=0}^{k-1} \frac{j!}{(j+k)!}. 
\end{equation}
This asymptotic is only known to be true in the cases  $k=1$ and  $k=2$ as established by Hardy-Littlewood \cite{HL}
and Ingham \cite{In} respectively. 
The conjecture in the form \eqref{IkTasymp} with $c_k$ given in \eqref{ckgk} is folklore.  For a long time, 
the values of $g_k$ were unknown until Keating and Snaith \cite{KS} famously  computed $g_k$ via a random
matrix model and then announced their result at a conference in Vienna in 1998.  
At the same conference Conrey and Gonek announced the conjecture $g_4=24024$, based on mean values of long Dirichlet polynomials. 
Previously, Conrey and Ghosh \cite{CGh} conjectured $g_3 =42$  by studying various mean value formulae for 
$\zeta(s)$ and Dirichlet polynomials. 

The Keating-Snaith conjecture \eqref{IkTasymp} is intimately related to the size of the Riemann zeta function and the Riemann hypothesis.   Littlewood showed that the Riemann hypothesis implies that 
\begin{equation}
  \label{littlewood}
  |\zeta(\tfrac{1}{2}+it)| \ll \exp \Big( 
  \frac{C \log t}{\log \log t} \Big), 
\end{equation}
for some positive constant $C$.
Note that the Lindel\"{o}f hypothesis (LH) is the assertion
\begin{equation}
    \label{lindelof}
    \text{ for all } \e >0, \,  \, 
     |\zeta(\tfrac{1}{2}+it)| \ll_{\e} |t|^{\e}.
\end{equation}
From  Littlewood's bound \eqref{littlewood}
it follows that the Riemann hypothesis implies LH. 
The first non-trivial subconvexity bound was established by Hardy-Littlewood, 
using a method of Weyl (see \cite[Chapter 5]{Ti}):
\begin{equation}
     \label{subconvex}
    \text{ for all } \e >0, \,  \, 
     |\zeta(\tfrac{1}{2}+it)| \ll_{\e} |t|^{\vartheta+\e}
\end{equation}
where $\vartheta= \frac{1}{6}=0.1666 \ldots$. The current record due to Bourgain
is \eqref{subconvex} with  $\vartheta= \frac{13}{84} = 0.1547 \ldots $.   
Hardy and Littlewood introduced the moments $I_k(T)$ in an attempt to solve LH. 
This is since it is known that the bound $I_k(T) \ll_{k,\e} T^{1+\e}$ for all $k \ge 1$ implies \eqref{lindelof}.  Therefore, the Keating-Snaith conjecture \eqref{IkTasymp} implies 
LH.   The Riemann hypothesis has a number of arithmetic consequences. In some instances these arithmetic consequences can be proved only assuming LH. 
For instance,  Ingham \cite{In} showed that LH implies that the gaps between consecutive primes satisfies $p_{n+1}-p_n \ll p_{n}^{\tfrac{1}{2}+\e}.$
One application of a uniform version of the Keating-Snaith conjecture 
is to the maximal size of the Riemann zeta function on the critical 
line.  Farmer, Gonek, and Hughes \cite{FGH} (see also \cite{HR})  conjectured that 
\begin{equation}
    \label{maxzeta} 
    \max_{0 \le t \le T} |\zeta(\tfrac{1}{2}+it)|
    = \exp \Big(  (1+o(1)) \sqrt{\tfrac{1}{2} \log T \log \log T}
    \Big).
\end{equation}
Another reason for studying the moments \eqref{IkT} is that the techniques, tools, and ideas  used in evaluating them can often be useful in the context of moment problems
for other families of $L$-functions. In the last thirty years, there has been a flurry of activity in the study of the distribution and moments of $L$-functions and the distribution of values of $L$-functions. 
For a comprehensive overview of many of the recent advances in the theory see \cite{S}.

In studying the moments \eqref{IkT} it is useful to consider
smoothed and shifted versions of them  given by 
  \begin{equation}
   \label{IIJT}
    \int_{-\infty}^{\infty} \omega(t)
   \Big( \prod_{j=1}^{k}
     \zeta(\oh+a_j +it) 
   \prod_{j=1}^{\ell}
    \zeta(\oh+b_j -it) \Big)
  \, dt,
\end{equation}
where $\{ a_1, \ldots, a_k \}$ and $\{ b_1, \ldots, b_{\ell} \}$ are multisets of complex numbers and   $\omega$ is a suitable 
real or complex valued function.  The idea of  using complex shifting parameters was introduced by Ingham \cite{In}, and
the idea of introducing smoothing weights has long been known and was used by Atkinson \cite{At} and Titchmarsh \cite{Ti}. 
In \cite{HB} a smooth weight $\omega(t)$ was removed and replaced by an indicator function. 
Generalized moments such as   \eqref{IIJT} are known to  reveal the combinatorial structure of the moments \eqref{IkT} (see \cite{CFKRS}).

The mean values \eqref{IkT} and \eqref{IIJT} can be modelled by mean values of long Dirichlet polynomials.

This approach had previously been introduced by 
Conrey-Gonek \cite{CG} and Ivi\'{c} \cite{Iv1}. 
We set 
\begin{equation}
  \label{ABphi} 
  \mathds{A}_{a,\varphi}(s) = \sum_{n =1}^{\infty} \frac{a(n)}{n^s} \varphi \Big( \frac{n}{K} \Big),\quad
  \mathds{B}_{b,\varphi}(s) = \sum_{n =1}^{\infty} \frac{b(n)}{n^s} \varphi \Big( \frac{n}{K} \Big),
\end{equation}
where $K=T^{\theta}$, $\{a(n)\}$ and $\{b(n)\}$ 
are arbitrary sequences, and $\varphi$ is a real valued function.  
Attached to these polynomials is the mean value
\begin{equation}
  \label{DabwK}
     \mathscr{D}_{a,b;  \omega}(K) = \int_{\mathbb{R}} \omega(t)  \mathds{A}_{a,\varphi}(\tfrac{1}{2}+it) 
     \mathds{B}_{b,\varphi}(\tfrac{1}{2}-it)\;dt.
\end{equation}
Such mean values are simple to evaluate when $0 < \theta \le 1$. 
In the case that $\theta >1$, they become harder to evaluate and they are called
mean values of a long Dirichlet polynomial. Goldston and Gonek \cite{GG} 
considered such mean values and provided certain formulae   for $\mathscr{D}_{a,b;  \omega}(K)$
based on correlation sum estimates for $a(n)$ and $b(n)$.
In this article we shall evaluate $\mathscr{D}_{a,b; \omega}(K)$ in the cases that $a(n)$ and $b(n)$ are 
generalized divisor functions, $\theta \in (1,2)$,  and $\omega$ and $\varphi$ are suitably chosen smooth functions (see Section \ref{section:weights} for details). 
 Throughout this article $k$ and $\ell$ denote natural numbers
and  $\I$ and $\J$ denote  multisets of complex numbers given by 
\begin{equation*}
  \label{IJ}
  \I = \{ a_1, \ldots, a_k \} \text{ and }
  \J = \{b_1, \ldots, b_{\ell} \}. 
\end{equation*}
It will be convenient to use the notation 
\begin{equation}
 \label{KL}
 \K =  \{ 1, \ldots, k \} \text{ and }
 \Ll =  \{ 1, \ldots, \ell \}.
\end{equation} 
We shall assume throughout the article that we have the following size condition:
there exists a positive absolute constant $ \delta $ such that 
\begin{equation}
    \label{sizerestrictiondelta}
   |a_i|, |b_j|   \le \delta \text{ for }  i \in \K \text{ and } j \in \Ll.  
\end{equation} 
At times we shall require the more restrictive size conditions
\begin{equation}
  \label{sizerestriction}
 |a_i|, |b_j|   \ll \frac{1}{\log T} \text{ for }  i \in \K \text{ and } j \in \Ll, 
\end{equation}
and \begin{equation}
  \label{sizerestriction-} 
  |a_{i_1} - a_{i_2}| \gg \frac{1}{\log T} 
  \text{ and }
  |b_{j_1}-b_{j_2}| \gg \frac{1}{\log T}
  \text{ for } i_1 \ne i_2 \in \K \text{ and }
  j_1 \ne j_2 \in \Ll,
\end{equation}

where $T \ge 2$ is a parameter.  Note that if $T$ is taken sufficiently large, and  if $\I$ and $\J$ satisfy
\eqref{sizerestriction}, then they will also satisfy \eqref{sizerestrictiondelta}. 
For $k \in \mathbb{N}$, we define the $k$-th divisor function to be 
\[
  \tau_k(n) = \# \{ (n_1, \ldots, n_k) \in \mathbb{N}^k \ | \ n_1 \cdots n_k = n \}
  \text{ for } n \in \mathbb{N}
\]
and for a multiset $\I= \{ a_1, \ldots, a_k \} \subset \mathbb{C}$, 
the shifted divisor function 
\footnote{In the  articles \cite{CK1}, \cite{CK2}, \cite{CK3}, \cite{CK4}, and \cite{CK5},  the authors use the 
notation $\tau_{\I}(n)$ instead of our $\sigma_{\I}(n)$. }
is given by 
\begin{equation}
  \label{shiftdiv}
  \sigma_{\I}(n) = \sum_{d_1 \cdots d_k=n} d_{1}^{-a_1} \cdots d_{k}^{-a_k}. 
\end{equation}
Observe that if $\I=\{0, \ldots, 0\}$, then $\sigma_{\I}(n)=\tau_{k}(n)$ where $k=|\I|$. 
We shall evaluate $ \mathscr{D}_{a,b;\omega}(K)$ in the cases
\begin{equation*}
  a(n) =\tau_k(n) \text{ and }
  b(n) =\tau_{\ell}(n),
\end{equation*}
and
\begin{equation*}
  a(n) = \sigma_{\I}(n) \text{ and }
  b(n) =\sigma_{\J}(n).
\end{equation*}
We shall use the short hand notation
\begin{equation*}
    \mathscr{D}_{k,\ell;\omega}(K) := \mathscr{D}_{\tau_k,\tau_\ell;\omega}(K)
    \text{ and }
      \mathscr{D}_{\I,\J;\omega}(K) := \mathscr{D}_{\sigma_{\I},\sigma_{\J};\omega}(K).
\end{equation*}
In \cite{CG}, Conrey and Gonek gave heuristic arguments which showed how to model the sixth and eighth moments
in terms of $   \mathscr{D}_{k,k;\omega}(K)$ for $k=3,4$.  
We have the following conjecture for   $\mathscr{D}_{k,\ell;\omega}(K)$.  In the case $k=\ell$ this is due 
to Conrey-Gonek \cite[Conjecture 4, p.583]{CG}.
\begin{conj}
  \label{conjCG}
Let $k,\ell \in \mathbb{N}$.  Let $T$ be sufficiently large, $K=T^{1+\eta}$ where $\eta \in (0,1)$.  Then we have
\begin{equation*}
    \mathscr{D}_{k,\ell;\omega}(K)  \sim \frac{a_{k,\ell}}{\Gamma(k\ell+1)} w_{k,\ell} \left(1+\eta \right)T (\log T)^{k\ell},
\end{equation*}
where
\begin{equation*}
a_{k,\ell}=\prod_{p}\left(\left(1-\frac{1}{p}\right)^{k\ell}\sum_{\alpha=0}^{\infty}\frac{\tau_{k}(p^{\alpha})\tau_{\ell}(p^{\alpha})}{p^{\alpha}}\right),
\end{equation*}
\begin{equation}
  \label{wklx}
 w_{k,\ell}(x) = x^{k\ell}  \Big( 1- \sum_{n=0}^{k\ell-1} \binom{k\ell}{n+1}\gamma_{k,\ell}(n) (-1)^{n+\ell+k} (1-x^{-n-1})  \Big),
\end{equation}
\begin{equation}
  \label{gammakln}
  \gamma_{k,\ell}(n) =  \sum_{i=1}^{\ell} \sum_{j=1}^{k}  \binom{\ell}{i} \binom{k}{j}
   \binom{n-1}{i+j-2 }\binom{i+j-2}{i+k-\ell-1},
\end{equation}
for $n\in\mathbb{N}$, and \[\gamma_{k,\ell}(0)=\sum_{i=1}^{\ell} \sum_{j=1}^{k} (-1)^{k+\ell+i+j} \binom{\ell}{i} \binom{k}{j}
   \binom{i+j-2}{i+k-\ell-1}.\]
\end{conj}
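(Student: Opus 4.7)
The plan is to open up the product of Dirichlet polynomials and perform the $t$-integral explicitly, writing
\[
\mathscr{D}_{k,\ell;\omega}(K) = \sum_{m,n \ge 1} \frac{\tau_k(m)\tau_\ell(n)}{\sqrt{mn}}\, \varphi\!\left(\frac{m}{K}\right)\varphi\!\left(\frac{n}{K}\right) \hat{\omega}\!\left(\frac{\log(m/n)}{2\pi}\right).
\]
Since the $t$-integral effectively has length $T$ while $K = T^{1+\eta}$, the factor $\hat{\omega}$ decays rapidly unless $|\log(m/n)| \lesssim T^{-1}$, so setting $h = m - n$ the relevant range is $|h| \lesssim K/T = T^{\eta}$. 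This produces both a diagonal ($h=0$) and an off-diagonal ($h\neq 0$) piece; the off-diagonal one is genuinely nontrivial in the long-polynomial regime and is precisely what distinguishes the problem from the easier case $\theta \le 1$.

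For the diagonal term, I would use Mellin inversion together with the factorization
\[
\sum_{n \ge 1} \frac{\tau_k(n)\tau_\ell(n)}{n^s} = \zeta(s)^{k\ell}\, G_{k,\ell}(s),
\]
where $G_{k,\ell}$ is an Euler product holomorphic in $\mathrm{Re}(s) > \tfrac{1}{2}$ with $G_{k,\ell}(1) = a_{k,\ell}$. Shifting the contour past the order-$k\ell$ pole at $s=1$ yields a polynomial in $\log K$ of degree $k\ell$ whose leading coefficient is $\frac{a_{k,\ell}}{(k\ell)!}\hat{\omega}(0)$ times combinatorial factors coming from $\varphi$; this is the source of the $(1+\eta)^{k\ell}$ prefix of $w_{k,\ell}(x)$ in (\ref{wklx}).

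For the off-diagonal, I would reindex by $h$ and invoke the assumed smoothed additive divisor conjecture for $\tau_k \tau_\ell$ correlations: for fixed $h \neq 0$ and a smooth weight $\psi$,
\[
\sum_n \tau_k(n+h)\tau_\ell(n) \psi(n) = \int_{1}^{\infty} P_{k,\ell,h}(\log x) \psi(x)\, dx + (\text{error}),
\]
where $P_{k,\ell,h}$ is an explicit polynomial of degree $k+\ell-2$ whose coefficients involve a singular series $\mathfrak{S}_{k,\ell}(h)$. Summing the main term over $0 < |h| \lesssim T^\eta$ weighted by $\hat{\omega}(\log(1+h/n)/(2\pi))$ and encoding $\sum_h \mathfrak{S}_{k,\ell}(h) h^{-s}$ as a Dirichlet series should unfold into a contour integral whose residues at $s = 0, 1, \ldots, k\ell-1$ produce exactly the terms $\binom{k\ell}{n+1}\gamma_{k,\ell}(n)(-1)^{n+k+\ell}(1-(1+\eta)^{-n-1})$ inside $w_{k,\ell}(1+\eta)$.

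I expect the main obstacle to be twofold. The analytic difficulty is controlling the error in the additive divisor conjecture \emph{uniformly in $h$} over the full range $|h| \lesssim T^\eta$, since the number of shifts grows with $T$ and any loss worse than a power saving in the shift would swamp the main term; this is the reason a smoothed and uniform version of the conjecture is required rather than the standard fixed-$h$ statement. The combinatorial difficulty is then matching the sum of residues from the diagonal and off-diagonal contributions against the explicit polynomial $w_{k,\ell}$ and the double binomial expression (\ref{gammakln}) for $\gamma_{k,\ell}(n)$: the factor $\binom{i+j-2}{i+k-\ell-1}$ reflects the swap symmetry between the $k$-shifts and $\ell$-shifts in the Conrey-Farmer-Keating-Rubinstein-Snaith recipe, and aligning our residue expansion with that shape will constitute the heart of the proof.
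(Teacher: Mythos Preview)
The statement you are trying to prove is labelled \emph{Conjecture}~\ref{conjCG} in the paper, and the paper does not contain a proof of it. What the paper actually proves is Theorem~\ref{mainthm}, which establishes the Conrey--Keating asymptotic (Conjecture~\ref{ck2}) for $\mathscr{D}_{\I,\J;\omega}(K)$ in the shifted setting, conditionally on the additive divisor hypothesis $\mathcal{AD}_{k,\ell}$. The passage from that result to the explicit polynomial $w_{k,\ell}$ and the binomial expression $\gamma_{k,\ell}(n)$ is deferred to the companion paper~\cite{HN}, and even there only the case $k=\ell=2$ is carried out (Theorem~\ref{thm22}).

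That said, your outline is essentially the strategy the paper uses for Theorem~\ref{mainthm}: decompose into diagonal and off-diagonal via the Fourier transform of $\omega$, handle the diagonal by Mellin inversion and the factorization $Z_{\I,\J}(s)=\mathcal{Z}(\I_s,\J)\mathcal{A}(\I_s,\J)$ (your $\zeta(s)^{k\ell}G_{k,\ell}(s)$ is the unshifted special case), and treat the off-diagonal by inserting the additive divisor conjecture and summing over shifts. The paper adds two technical devices you omit: a smooth dyadic partition of unity on $m$ and $n$ so that Conjecture~\ref{conj:shifted-sum} can be applied to functions $f_r$ with the required support and derivative bounds, and the use of distinct complex shifts $a_i,b_j$ to separate the poles before specializing (so the residue calculation involves simple poles rather than a single pole of order $k\ell$).

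The part you correctly flag as the ``heart of the proof'' --- matching the residue expansion to the explicit double-binomial form of $\gamma_{k,\ell}(n)$ --- is exactly the step the present paper does \emph{not} perform. Your plan to encode $\sum_h \mathfrak{S}_{k,\ell}(h)h^{-s}$ as a Dirichlet series is what the paper does via $H_{\I,\J;\{a_{i_1}\},\{b_{i_2}\}}(s)$ in Proposition~\ref{Hid}, but the paper stops after expressing the answer in the Conrey--Keating swap form $\mathcal{M}_0+\mathcal{M}_1$ and never extracts $w_{k,\ell}$. So your proposal is a reasonable sketch of a conditional argument, but there is no proof in the paper to compare it against; the combinatorial reduction you anticipate remains open here.
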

\label{w33}
Let   $\eta \in (0,1)$  and $\omega=\mathbb{1}_{[T,2T]}$.  In \cite{CG} an argument with the classical 
approximate functional equation for $\zeta^{k}(s)$ is given which suggests the asymptotics 
\begin{align}
 \label{zeta6dp}
\int_{T}^{2T} |\zeta(\tfrac{1}{2}+it)|^{6} \, dt & \sim 
  \mathscr{D}_{3,3;\omega}(T^{1+\eta}) + \mathscr{D}_{3,3;\omega}(T^{2-\eta}) 
  \sim 
 ( w_{3,3} (1+ \eta)
  + w_{3,3} ( 2-\eta))   \frac{c_3}{9!}  T (\log T)^{9} 
\end{align}
and 
\begin{equation}
 \label{zeta8dp}
  \int_{T}^{2T} |\zeta(\tfrac{1}{2}+it)|^{8} \, dt   \sim 
  2\mathscr{D}_{4,4;\omega}(T^{2})  
   \sim 2 w_{4,4} ( 2)   \frac{c_4}{16!}  T (\log T)^{16} .
\end{equation}
In fact, one can verify by straightforward computations that
\[
    w_{3,3} (1+ \eta)
  + w_{3,3} ( 2-\eta) = 42=g_3 \text{ and }
  w_{4,4}(2) = 24024=g_4,
  \footnote{Note that $w_{3,3}(x) = -2x^9+27x^8-324x^7+2268x^6-8694x^5+19278x^4-25452x^3+19764x^2-8343x+1479$  and \\
  $w_{4,4}(x) = 
 -3x^{16}+16x^{15}-1320x^{14}+14000x^{13}-78260x^{12}+179088x^{11} -152152x^{10}+11440x^{9}+12870x^{8}+11440x^{7}+8008x^6+4368x^5+1820x^4+560x^3+120x^2+16x+1$.}
\]
where $g_k$ is as defined in \eqref{ckgk}. Thus  \eqref{zeta6dp} and \eqref{zeta8dp} agree with the Keating-Snaith conjecture. 
This was one of the main motivations for evaluating mean values of long Dirichlet polynomials with divisor coefficients.  In \cite{Ng}, the heuristic \eqref{zeta6dp} is made precise assuming the $3$-$3$ additive divisor conjecture  (see Conjecture \ref{conj:shifted-sum} below) with error term $O(P^{C} X^{\frac{2}{3}-\delta})$, for some $\delta >0$, uniformly for $|r| \le \sqrt{X}$. In \cite{NSW}, \eqref{zeta8dp} is made precise in the same way where the $4$-$4$ additive divisor conjecture is required with an 
error term $O(P^{C} X^{\frac{1}{2}+\e_1})$, uniformly for $|r| \le X^{1-\e_2}$
where $\e_1, \e_2$ are arbitrarily small positive constants.  Note that the argument of Conrey-Gonek using the additive divisor sums \eqref{DfIJr} does not seem to extend to the moments $I_k(T)$ with $k >4$. 
In order to further understand $I_k(T)$, Conrey and Keating undertook an extensive study
\cite{CK1}, \cite{CK2}, \cite{CK3}, \cite{CK4}, \cite{CK5}
of $\mathscr{D}_{\I,\J,\omega}(K)$, the mean values of Dirichlet polynomials with shifted divisor functions.  This work has led to the consideration of more complicated additive divisor 
sums.  Furthermore, they have formulated a conjecture on the asymptotic size of 
$\mathscr{D}_{\I,\J,\omega}(K)$.

In order to state their conjecture we must introduce some notation and definitions. 
\begin{defn} 
Let $\I,\J$ be finite multisets of complex numbers. We define $\mathcal{B}(\I,\J)$ as the series 
\begin{align}\label{BIJ}\mathcal{B}(\I,\J)&=\sum_{n=1}^{\infty}\frac{\sigma_{\I}(n)\sigma_{\J}(n)}{n},\end{align} if the series converges (for example, when $\Re(a),\Re(b)>0$ for all $a\in\I$ and $b\in \J$), and by analytic continuation otherwise.
\end{defn}
Observe that when the series \eqref{BIJ} converges, we use the multiplicativity of $\sigma_{\I}\sigma_{\J}$ to write
\begin{align}\label{BIJeuler}
  \mathcal{B}(\I,\J)&=\prod_{p}\sum_{u=0}^{\infty}\frac{\sigma_{\I}(p^u)\sigma_{\J}(p^u)}{p^{u}}.\end{align}

Upon factoring out $\prod_{p}  \prod_{\substack{ i \in \K \\ j \in \Ll}  }  (1 - p^{-1-a_i -b_j})^{-1}$  from the right hand side of \eqref{BIJeuler}, we obtain 
\begin{equation*}
\begin{split}
  \mathcal{B}(\I,\J)&=\prod_p\prod_{\substack{i \in \K \\ j \in \Ll}}(1 - p^{-1-a_i -b_j})^{-1}\prod_{p}\prod_{\substack{i \in \K \\ j \in \Ll}}(1 - p^{-1-a_i -b_j})\sum_{u=0}^{\infty}\frac{\sigma_{\I}(p^u)\sigma_{\J}(p^u)}{p^{u}}.
\end{split}\end{equation*}

\begin{defn}\label{ZA}
For $p$ prime and $s \in \C$, we set $z_p(s) = (1-p^{-s})^{-1}$. 
 Attached to the local factors $z_{p}(s)$, we define 
\begin{align}
   \label{ZIJ}
     \Z(\I,\J) & =  \prod_{p}\prod_{\substack{i\in \K\\j\in\Ll}} z_{p}(1+a_i+b_j), \\
     \label{AIJ}
      \mathcal{A}(\I,\J) & =  \prod_{p} \prod_{\substack{i\in \K\\j\in\Ll}}  z_{p}^{-1}(1+a_i+b_j)\sum_{u=0}^{\infty}\frac{\sigma_{\I}(p^u)\sigma_{\J}(p^u)}{p^{u}}. 
 \end{align}

\end{defn}
Observe that we have 
\begin{equation}
  \label{ZIJid}
   \mathcal{Z}(\I,\J) = \prod_{i \in \K, j \in \Ll} \zeta(1+a_i+b_j),
\end{equation}
and 
\begin{equation}
  \label{BIJid}
   \mathcal{B}(\I,\J)  =   \mathcal{A}(\I,\J) \mathcal{Z}(\I,\J).
\end{equation}
We now introduce some useful notation on set operations. 
\begin{defn}\label{setops}
Given a multiset $U= \{\alpha_1, \ldots, \alpha_n \}$ and $w \in \mathbb{C}$, we 
define 
$U_{w} := U+\{ w \}= \{\alpha_1+w, \ldots, \alpha_n+w \}$. We also set $-U=\{-\alpha_1, \ldots, -\alpha_n \}$.With this notation, observe that we have the identity
\begin{equation}
    \label{sigmaidentity}
    \sigma_{U_w}(n) = n^{-w} \sigma_{U}(n).
\end{equation}
\end{defn} 

We can now state the Conrey-Keating conjectures for the mean values $ \mathscr{D}_{\I,\J;\omega}(K)$ (see \cite[pages~739-740]{CK3}). 
\begin{conj}[Conrey-Keating] \label{ck1}
Let $K=T^{\theta}$ with $\theta >0$.  Then for $T$ sufficiently large
\begin{align}\label{ck1eq}
       & \mathscr{D}_{\I,\J;\omega}(K)
       = \int_{0}^{\infty} \omega(t) 
       \frac{1}{(2 \pi i)^2} \int_{(c_1)} \int_{(c_2)} \Phi(s_1)\Phi(s_2)K^{s_1+s_2}
        \times \nonumber \\
      &  \sum_{ \substack{U \subset \I, V \subset \J \\ |U|=|V|   } }
       \Big( \frac{t}{2 \pi} \Big)^{-\sum_{x\in U, y \in V}(x+s_1+y+s_2)}  
       \mathcal{B}((\I_{s_2} \backslash U_{s_2}) \cup (-V_{s_1}),(\J_{s_1} 
       \backslash V_{s_1}) \cup (-U_{s_2})) \;ds_1 ds_2 dt
       +o(T),
\end{align}
where $c_1, c_2 >0$ and $\Phi$ is the Mellin tranform of $\varphi$.  
\end{conj}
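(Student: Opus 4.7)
The plan is to open the two Dirichlet polynomials, apply Mellin inversion to the smoothing factors $\varphi(n/K)$, and reduce the mean value to a sum of smoothed additive divisor sums which are then evaluated using the hypothesized additive divisor conjecture. Writing
\[
  \varphi(n/K) = \frac{1}{2\pi i}\int_{(c)} \Phi(s)\, K^{s}\, n^{-s}\, ds
\]
for a suitable $c>0$, one formally obtains
\[
  \mathscr{D}_{\I,\J;\omega}(K) = \frac{1}{(2\pi i)^2}\int_{(c_1)}\!\int_{(c_2)} \Phi(s_1)\Phi(s_2)\, K^{s_1+s_2}\, M(s_1,s_2)\, ds_1\, ds_2,
\]
where
\[
  M(s_1,s_2) = \int_{\R} \omega(t) \sum_{m,n\ge 1} \frac{\sigma_{\I}(m)\, \sigma_{\J}(n)}{m^{1/2+s_1+it}\, n^{1/2+s_2-it}}\, dt.
\]

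First I would split $M(s_1,s_2)$ into a diagonal contribution coming from $m=n$ and an off-diagonal contribution from $m\ne n$. The diagonal is evaluated directly: using the Dirichlet series $\sum_n \sigma_{\I_{s_2}}(n)\sigma_{\J_{s_1}}(n)/n = \mathcal{B}(\I_{s_2},\J_{s_1})$ together with \eqref{BIJid} and \eqref{ZIJid}, one reads off exactly the term $U=V=\emptyset$ of \eqref{ck1eq}. For the off-diagonal, I would write $n=m+h$ (and symmetrically $m=n+h$) for $h\ne 0$, interchange summation, and apply the assumed smoothed $k$-$\ell$ additive divisor conjecture to the inner sum
\[
  \sum_{m}\sigma_{\I}(m)\, \sigma_{\J}(m+h)\, g_{s_1,s_2,t}(m),
\]
in which $g_{s_1,s_2,t}$ absorbs the factor $m^{-1/2-s_1-it}(m+h)^{-1/2-s_2+it}$ together with the weight produced by $\omega$. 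The main term provided by the additive divisor conjecture is naturally indexed by pairs of subsets $U\subset \I$, $V\subset \J$ with $|U|=|V|$, which accounts for the outer summation in \eqref{ck1eq}.

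The remaining work is the $h$-summation. Carried out by Poisson summation (or Mellin-Barnes inversion applied to the additive divisor main term), it produces both the oscillatory factor $(t/2\pi)^{-\sum_{x\in U,\, y\in V}(x+s_1+y+s_2)}$ and, after collecting the local Euler factors corresponding to the shifts that are ``swapped in'' and ``swapped out,'' the clean expression $\mathcal{B}((\I_{s_2}\setminus U_{s_2})\cup(-V_{s_1}),(\J_{s_1}\setminus V_{s_1})\cup(-U_{s_2}))$. Reassembling the contour integrals in $s_1, s_2$ and keeping the outer integral in $t$ against $\omega$ yields \eqref{ck1eq}.

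The main obstacle is twofold. First, the combinatorial matching: one must verify that the main term of the additive divisor conjecture collapses, after the $h$-summation, into precisely the $\mathcal{B}$-factor displayed in the statement, using the shift identity $\sigma_{U_w}(n)=n^{-w}\sigma_U(n)$ and careful bookkeeping of which shift parameters are moved under the off-diagonal swap $m\leftrightarrow n$. Second, the error control: the error term from the additive divisor conjecture must be summed over $|h|$ up to roughly $K=T^{\theta}$ and then integrated against $\Phi(s_1)\Phi(s_2)$, and one must show that the resulting contribution is $o(T)$. This is where the restriction on $\theta$ enters, analogously to the $3$-$3$ and $4$-$4$ situations treated in \cite{Ng} and \cite{NSW}.
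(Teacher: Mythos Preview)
The statement you are attempting to prove is labeled in the paper as a \emph{conjecture} (Conjecture~\ref{ck1}), not a theorem, and the paper does not prove it. What the paper actually establishes (Theorem~\ref{mainthm}) is the weaker Conjecture~\ref{ck2}, which is the special case $K=o(T^2)$ where only the swaps with $|U|=|V|\le 1$ contribute. Your proposal conflates these two statements.

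The concrete gap is your claim that ``the main term provided by the additive divisor conjecture is naturally indexed by pairs of subsets $U\subset\I$, $V\subset\J$ with $|U|=|V|$.'' This is not what the additive divisor conjecture (Conjecture~\ref{conj:shifted-sum}) gives. Its main term is a double sum over \emph{single} indices $i_1\in\K$, $i_2\in\Ll$, and after the off-diagonal analysis this produces exactly the $|U|=|V|=1$ swap terms in \eqref{M1swaps}; the diagonal $m=n$ gives the $|U|=|V|=0$ term \eqref{M0swaps}. There is no mechanism in the additive divisor conjecture, as stated, that generates the higher swaps $|U|=|V|\ge 2$. Indeed the paper remarks that the Conrey--Gonek approach via \eqref{DfIJr} does not extend to moments higher than the eighth, precisely because the needed higher-swap terms do not emerge from binary correlations of divisor functions. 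Capturing the full sum over all $U,V$ in \eqref{ck1eq} is the content of the Conrey--Keating program \cite{CK1}--\cite{CK5} and requires, among other things, correlations of more than two divisor-like functions; it is not accessible by the route you outline.
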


In the above conjecture, we used the terminology of \cite{CK3}; for example, by $(\I_{s_2} \backslash U_{s_2}) \cup (-V_{s_1})$ we mean the following: remove the elements of $U_{s_2}$ from $\I_{s_2}$ and then include the negatives of the elements of $V_{s_1}$. The notation $(\J_{s_1} 
\backslash V_{s_1}) \cup (-U_{s_2})$ can be explained similarly. Since the subsets $U$ and $V$ have equal cardinalities, this process is referred to as swapping equal numbers of elements between $\I_{s_2}$ and $\J_{s_1}$. The cardinality $|U|$ is referred to as the number of swaps in the associated term.

To give more insight on the terms $\mathcal{B}((\I_{s_2} \backslash U_{s_2}) \cup (-V_{s_1}),(\J_{s_1} 
       \backslash V_{s_1}) \cup (-U_{s_2}))$ appearing in  \eqref{ck1eq}, we give precise formulae when $|\I|=|\J|=2$. Suppose that $\I=\{a_1,a_2\}$ and $\J=\{b_1,b_2\}$. The term $\mathcal{B}((\I_{s_2} \backslash U_{s_2}) \cup (-V_{s_1}),(\J_{s_1} 
       \backslash V_{s_1}) \cup (-U_{s_2}))$ corresponding to $|U|=|V|=0$ simplifies to \begin{align}\label{BIJcase2}
          & \mathcal{B}(\{a_1+s_2,a_2+s_2\},\{b_1+s_1,b_2+s_1\})\nonumber\\&=\frac{\zeta(1+s_1+s_2+a_1+b_1)\zeta(1+s_1+s_2+a_1+b_2)\zeta(1+s_1+s_2+a_2+b_1)\zeta(1+s_1+s_2+a_2+b_2)}{\zeta(2+2s_1+2s_2+a_1+a_2+b_1+b_2)},\end{align}
       which can be derived from a formula of Ramanujan (see \cite[Eq~1.3.3]{Ti}). To describe the terms corresponding to $|U|=|V|=1$,  we consider the case $U=\{a_1\}$ and $V=\{b_1\}$ as an example. In this case, the term $\mathcal{B}((\I_{s_2} \backslash U_{s_2}) \cup (-V_{s_1}),(\J_{s_1} 
       \backslash V_{s_1}) \cup (-U_{s_2}))$ simplifies to 
       \begin{align}
           &\mathcal{B}(\{a_2+s_2,-b_1-s_1\},\{-a_1-s_2,b_2+s_1\})\nonumber\\&=\frac{\zeta(1+a_2-a_1)\zeta(1+b_2-b_1)\zeta(1+s_1+s_2+a_2+b_2)\zeta(1-s_1-s_2-a_1-b_1)}{\zeta(2+a_2-a_1+b_2-b_1)}.
       \end{align}
       
       The remaining three terms corresponding to $|U|=|V|=1$ can be computed similarly. When $|U|=|V|=2$, we see that the term  $\mathcal{B}((\I_{s_2} \backslash U_{s_2}) \cup (-V_{s_1}),(\J_{s_1} 
       \backslash V_{s_1}) \cup (-U_{s_2}))$ simplifies to 
       \begin{align}
          &\mathcal{B}(\{-b_1-s_1,-b_2-s_1\},\{-a_1-s_2,-a_2-s_2\})\nonumber \\&=\frac{\zeta(1-s_1-s_2-a_1-b_1)\zeta(1-s_2-s_2-a_2-b_1)\zeta(1-s_1-s_2-a_1-b_2)\zeta(1-s_1-s_2-a_2-b_2)}{\zeta(2-2s_1-2s_2-a_1-a_1-b_1-b_2)}
       \end{align}
  Going back to \eqref{ck1eq} and using \eqref{sizerestriction}, we note that the inner double integrand is 
of size  \[|\Phi(s_1)\Phi(s_2)|(K/T^{|U|})^{\Re(s_1)+\Re(s_2)}.\] Thus, the size of $K$ determines which swaps contribute to our main term.
In the special case when $K=o(T^2)$,  we only get a contribution
from $U,V$ such that $|U|=|V| \le 1$.   In particular, if $K=o(T^2)$ and $|U|=|V|\ge 2$, then 
 these terms do not contribute to the main term. 
The contribution of the terms with $|U|=|V|=0$ to the integral is 
 \begin{equation}
 \begin{split}
    \label{M0swaps}
  \mathcal{M}_{0, \I, \J; \omega}(K) &= \int_{0}^{\infty} \omega(t) 
       \frac{1}{(2 \pi i)^2} \int_{(c_1)} \int_{(c_2)} \Phi(s_1)\Phi(s_2)K^{s_1+s_2}
       \mathcal{B}(\I_{s_2} ,\J_{s_1} ) \;ds_1 ds_2 dt
      \\&=\frac{\widehat{\omega}(0)}{2\pi i}\int_{(c_1+c_2)}K^{s}\Phi_{2}(s)\mathcal{B}\left(\I_{s},\J\right)\;ds,
      \end{split}
 \end{equation}
where $\Phi_2$ is defined in \eqref{phi2}  in Section \ref{section:weights}. The second equality in \eqref{M0swaps} is obtained by applying the change of variables $s=s_2+s_2$ and observing that $\mathcal{B}(\I_{s_2},\J_{s_1})=\mathcal{B}(\I_{s_1+s_2},\J)$, which follows from
two applications of \eqref{sigmaidentity}.
The contribution of the terms with $|U|=|V|=1$ is 
     \begin{equation*}
     \begin{split}
     \mathcal{M}_{1,\I, \J; \omega}(K)  & :=\int_{0}^{\infty} \omega(t) 
       \frac{1}{(2 \pi i)^2} \int_{(c_1)} \int_{(c_2)} \Phi(s_1)\Phi(s_2)K^{s_1+s_2}
          \sum_{i \in \K,j \in \Ll  }
       \Big( \frac{2\pi}{t} \Big)^{a_i+b_j+s_1+s_2}  
      \nonumber\\&\hspace{2em}\times \mathcal{B}((\I_{s_2} \backslash \{a_i+s_2\}) \cup \{-b_j-s_1\},(\J_{s_1} 
       \backslash \{b_j+s_1\}) \cup \{-a_i-s_2\}) \;ds_1 ds_2 dt.
     \end{split}
     \end{equation*}
     Observe that (see \cite[page~740]{CK3})
     \begin{align*}
     &\mathcal{Z}((\I_{s_2} \backslash \{a_i+s_2\}) \cup \{-b_j-s_1\},(\J_{s_1} 
       \backslash \{b_j+s_1\}) \cup \{-a_i-s_2\})\\&=\mathcal{Z}(\I \setminus \{a_i\},\{-a_i\})\mathcal{Z}(\{-b_j\},\J \setminus \{ b_j \})\mathcal{Z}((\I \setminus \{a_i\})+s,(\J \setminus \{b_j\})
     )\zeta(1-a_i-b_j-s).
         \end{align*}
     By \eqref{BIJid} and the change of variables $s=s_1+s_2$, we get
   \begin{equation}
     \begin{split}
      \label{M1swaps} \mathcal{M}_{1,\I, \J; \omega}(K)  &= \int_{0}^{\infty} \omega(t) \sum_{i \in \K,j \in \Ll  }
       \Big( \frac{t}{2 \pi} \Big)^{-a_i -b_j } \mathcal{Z}(\I \setminus \{a_i\},\{-a_i\})\mathcal{Z}(\{-b_j\},\J \setminus \{ b_j \})
     \\&\hspace{2em}\times  \frac{1}{2 \pi i}  \int_{(c)} \Phi_2(s)\left(\frac{2\pi K}{t}\right)^{s}\mathcal{Z}((\I \setminus \{a_i\})+s,(\J \setminus \{b_j\})
     )\zeta(1-a_i-b_j-s)\\&\hspace{5em}\times\mathcal{A}( (\I \setminus \{a_i\}) \cup \{ -b_j-s \},   
     ((\J \setminus \{b_j\})+s)\cup\{-a_i\})\;ds\;dt.
\end{split}
\end{equation}
Based on these observations, Conjecture \ref{ck1} simplifies as follows in the case $K=o(T^2)$. 
\begin{conj}[Conrey-Keating] \label{ck2}
If $T \ll K =o(T^2)$, then 
\begin{align*}
        \mathscr{D}_{\I,\J;\omega}(K) =   \mathcal{M}_{0, \I, \J; \omega}(K)+  \mathcal{M}_{1,\I, \J; \omega}(K) 
       +o(T)
\end{align*}
where $ \mathcal{M}_{0,\I, \J; \omega}(K) $ and $ \mathcal{M}_{1,\I, \J; \omega}(K) $ are given in \eqref{M0swaps} and \eqref{M1swaps}. 
\end{conj}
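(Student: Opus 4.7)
The plan is to start from the right-hand side of \eqref{ck1eq} in Conjecture~\ref{ck1} and split the sum over swap pairs $(U,V)$ with $|U|=|V|=m$ into three regimes: $m=0$, $m=1$, and $m\geq 2$. The goal is to show that the first two regimes produce exactly $\mathcal{M}_{0,\I,\J;\omega}(K)$ and $\mathcal{M}_{1,\I,\J;\omega}(K)$ as given in \eqref{M0swaps} and \eqref{M1swaps}, while under the hypothesis $T\ll K=o(T^2)$ the $m\geq 2$ regime is absorbed into the $o(T)$ error.

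For the $m=0$ contribution the integrand contains $\mathcal{B}(\I_{s_2},\J_{s_1})$. Two applications of \eqref{sigmaidentity} inside the definition \eqref{BIJ} of $\mathcal{B}$ show $\mathcal{B}(\I_{s_2},\J_{s_1})=\mathcal{B}(\I_{s_1+s_2},\J)$, so the change of variables $s=s_1+s_2$ collapses the double Mellin integral onto one variable, with the convolution $\Phi(s_1)\Phi(s_2)$ merging into $\Phi_2(s)$ as defined in Section~\ref{section:weights}, and the $t$-integral producing $\widehat{\omega}(0)$. This is exactly \eqref{M0swaps}. For the $m=1$ terms, I would run through $U=\{a_i\}$, $V=\{b_j\}$ and use \eqref{BIJid} to write $\mathcal{B}=\mathcal{A}\cdot\mathcal{Z}$, then apply the factorization recorded right before \eqref{M1swaps},
\[
\mathcal{Z}((\I_{s_2}\setminus\{a_i+s_2\})\cup\{-b_j-s_1\},(\J_{s_1}\setminus\{b_j+s_1\})\cup\{-a_i-s_2\})
\]
into $\mathcal{Z}(\I\setminus\{a_i\},\{-a_i\})\mathcal{Z}(\{-b_j\},\J\setminus\{b_j\})\mathcal{Z}((\I\setminus\{a_i\})+s,\J\setminus\{b_j\})\zeta(1-a_i-b_j-s)$. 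Pulling the first two factors outside (they depend on neither $s_1,s_2$ nor $t$), absorbing the prefactor $(t/2\pi)^{-a_i-b_j-s_1-s_2}$, and again passing to $s=s_1+s_2$ and $\Phi_2$, reduces the double integral to the single Mellin integral in \eqref{M1swaps}.

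The main obstacle is the bound on the $m\geq 2$ regime. The key observation, already flagged in the excerpt, is that on the support of $\omega$ the combined $K$- and $t$-dependent factors behave like $(K/T^{m})^{\Re(s_1+s_2)}$, and $K=o(T^2)$ forces $K/T^m=o(1)$ whenever $m\geq 2$. I would shift the contours $\Re(s_j)=c_j$ to a sufficiently large positive real part so that $(K/T^m)^{\Re(s_1+s_2)}=o(T^{-\varepsilon})$; the rapid decay of $\Phi$ (being the Mellin transform of the smooth weight $\varphi$ of Section~\ref{section:weights}) together with the polynomial growth of the zeta factors ensures absolute convergence of the shifted double contour integral. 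The delicate bookkeeping is in tracking the residues collected during the shift: at each pole of the zeta numerator of $\mathcal{Z}((\I_{s_2}\setminus U_{s_2})\cup(-V_{s_1}),(\J_{s_1}\setminus V_{s_1})\cup(-U_{s_2}))$ the exponent $s_1+s_2$ is fixed, and one must verify that the residual expression, multiplied by $(K/T^m)^{s_1+s_2}$ evaluated at that pole, is still $o(T)$. The size restriction \eqref{sizerestrictiondelta} on the shifts $\I,\J$ together with the non-vanishing of the $\mathcal{A}$ factor near the poles of $\mathcal{Z}$ is what makes this residue analysis succeed; this is the most technical step and the one where the hypothesis $K=o(T^2)$ is actually used.
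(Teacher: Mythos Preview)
Your derivation of Conjecture~\ref{ck2} from Conjecture~\ref{ck1} matches exactly the heuristic the paper gives in the paragraphs between the two conjectures: the $m=0$ and $m=1$ reductions to \eqref{M0swaps} and \eqref{M1swaps} via \eqref{sigmaidentity}, \eqref{BIJid}, and the change of variables $s=s_1+s_2$ are precisely what the paper records, and your size observation $(K/T^m)^{\Re(s_1+s_2)}$ for the $m\ge 2$ swaps is the paper's own remark that ``these terms do not contribute to the main term'' when $K=o(T^2)$. So as a reduction of one conjecture to another, your outline is correct and essentially identical to the paper's.

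What you should be aware of, however, is that the paper does \emph{not} regard this as the proof of Conjecture~\ref{ck2}. Both statements remain conjectures; the passage you are reconstructing is only an explanation of why Conjecture~\ref{ck1} specializes to Conjecture~\ref{ck2} in the range $T\ll K=o(T^2)$. The paper's actual rigorous result toward Conjecture~\ref{ck2} is Theorem~\ref{mainthm}, and its proof takes a completely different route: it never starts from Conjecture~\ref{ck1}, but instead decomposes $\mathscr{D}_{\I,\J;\omega}(K)$ into diagonal and off-diagonal pieces \eqref{decomposition}, identifies the diagonal with $\mathcal{M}_0$ directly \eqref{diagonal}, and then evaluates the off-diagonal via the additive divisor conjecture $\mathcal{AD}_{k,\ell}$ (Conjecture~\ref{conj:shifted-sum}), smooth partitions of unity, and the analytic continuation of the Dirichlet series $H_{\I,\J;\{a_{i_1}\},\{b_{i_2}\}}(s)$ (Proposition~\ref{Hid}) to produce $\mathcal{M}_1$. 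Your contour-shifting programme for the $m\ge 2$ swaps, with its residue bookkeeping, is not something the paper carries out; it treats that step purely heuristically, and the genuine work is the off-diagonal analysis of Sections~\ref{sec:off-diag}--\ref{completeproof}.
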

A key goal of this article is to establish Conjecture \ref{ck2} under the assumption of an averaged additive divisor conjecture which provides an asymptotic formula for certain smoothed additive divisor sums (see Theorem \ref{mainthm}). Let us now introduce these additive divisor sums. 
We put 
\begin{equation}
  \label{DfIJr}
D_{f; \I,\J}(r)=\sum_{\substack{m,n\geq1\\m-n=r}}\sigma_{\I}(m)\sigma_{\J}(n)f(m,n).
\end{equation}
Moreover, the partial derivatives of $f$ satisfy growth conditions. 
That is,  there exist $X,Y,$ and $P \ge 1$   such that 
\begin{equation}
  \label{fsupport}
  \text{support}(f) \subset [X,2X] \times [Y,2Y]
\end{equation}
and
\begin{equation} \label{fcond} 
 x^{m} y^{n} f^{(m,n)}(x,y) \ll_{m,n}
P^{m+n}. 
\end{equation}

Before we state a conjectural asymptotic formula for the shifted convolution sum $D_{f;\I,\J}(r)$, we need to introduce the following definition.
\begin{defn}\label{def:functions}
Let $A=\{a_{1},\cdots,a_{m}\}$ be a finite multiset of complex numbers and $s\in\mathbb{C}$. We define two multiplicative functions $n\mapsto g_{A}(s,n)$ and $n\mapsto G_{A}(s,n)$ by
\begin{equation}\label{eqn:g-mult}
  g_{A}(s,n) = \prod_{p^{e}||n}\frac{\sum_{j=0}^{\infty} \frac{\sigma_{A}(p^{j+e})}{p^{js}}}{ \sum_{j=0}^{\infty} \frac{\sigma_{A}(p^j)}{p^{js}}} 
\end{equation}
and
\begin{equation}\label{eqn:G-mult}
G_{A}(s,n) = \sum_{d \mid n} \frac{\mu(d) d^s}{\phi(d)}
  \sum_{e \mid d} \frac{\mu(e)}{e^s} g_{A} \Big(s, \frac{ne}{d} \Big).
\end{equation}
Notice that, for $n\in\mathbb{N}$ we have $\sum_{j=1}^{\infty}\frac{\sigma_{A}(jn)}{j^{s}}=g_{A}(s,n)\prod_{a\in A}\zeta(s+a)$.
\end{defn}
Simpler expressions for $G_{A}(s,n)$ can be derived from  Lemmas \ref{Gmult}, Lemma \ref{comb}, and Lemma \ref{GIk}
below.

We are now prepared to state the averaged additive divisor conjecture.  
\begin{conj} \label{conj:shifted-sum}\text{($k$-$\ell$ Additive divisor conjecture)}
Let $k, \ell \in \mathbb{N}$.  
There exists a triple $(\vartheta_{k,\ell}, C_{k,\ell},\beta_{k,\ell}) \in [\frac{1}{2}, 1) \times [0,\infty) \times (0,1]$ for which 
the following (henceforth to be referred to as $\mathcal{AD}_{k,\ell}(\vartheta_{k,\ell},C_{k,\ell},\beta_{k,\ell})$, or the 
`additive divisor hypothesis') holds.  Let $\varepsilon$  be  a positive
absolute constant. 
Let $P > 1$, $H\geq1$, and let $X,Y > \frac{1}{2}$ satisfy $Y \asymp X$. For each integer $r$ with $1\leq|r|\leq H$, let $f_r$
be a smooth function satisfying \eqref{fsupport} and \eqref{fcond}, and suppose 
$\I = \{ a_1, a_2, \ldots, a_k \}$ and $\J = \{ b_1, \ldots, b_{\ell} \}$ are sets of distinct complex numbers 
satisfying $|a_i|, |b_j| \ll (\log X)^{-1}$ where
$ i \in \{ 1, \ldots, k \}$ and  $j \in \{ 1, \ldots, \ell \}$ (where the implicit constants are absolute).
Then, in those cases where $X$ is sufficiently large (in absolute terms), one has
  \begin{align*}
   &   D_{f_r;\I,\J}(r)  =   \sum_{i_1=1}^{k} \sum_{i_2=1}^{\ell} 
    \prod_{j_1 \ne i_1} \zeta(1-a_{i_1}+a_{j_1})   \prod_{j_2 \ne i_2} \zeta(1-b_{i_2}+b_{j_2})   \\
 &\hspace{1em} \times \sum_{q=1}^{\infty} \frac{c_{q}(r)G_{\I}(1-a_{i_1},q)G_{\J}(1-b_{i_2},q)  }{q^{2-a_{i_1}-b_{i_2}}}
  \int_{\max(0,r)}^{\infty}  f_r(x,x-r) x^{-a_{i_1}}(x-r)^{-b_{i_2}} dx 
+ \Delta_{f_r;\I,\J}(r), 
 \end{align*}
 where \[\sum_{1\leq|r|\leq H}\Delta_{f_r;\I,\J}(r)=O\left(HP^{C_{k,\ell}} X^{\vartheta_{k,\ell}+\varepsilon}\right)\]
 uniformly for $1\leq H \ll X^{\beta_{k,\ell}}$.
 \end{conj}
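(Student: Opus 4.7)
The plan is to establish this via a circle-method / delta-symbol decomposition of the constraint $m-n=r$, followed by Voronoi summation applied to each of the two divisor factors. First I would use a smooth delta-symbol expansion (of Duke--Friedlander--Iwaniec or Jutila type) to write the condition $m-n=r$ as a sum $\sum_q (1/q) \sideset{}{^*}\sum_{a\bmod q} e_q(a(m-n-r))\,w(m-n-r;q)$ against a suitable smooth weight. This decouples the $m$- and $n$-variables and reduces $D_{f_r;\I,\J}(r)$ to expressions of the form $\sum_{q,a} e_q(-ar)\, A_\I(q,a)\, B_\J(q,-a)$, where $A_\I(q,a) = \sum_m \sigma_\I(m)\, e_q(am)\,F(m)$ and $B_\J$ is analogous. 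I would then apply the Voronoi summation formula for $\sigma_\I$ in the arithmetic progression $m \equiv a \pmod q$: each such application produces a ``main term'' coming from the poles at $s = 1-a_i$ of $\sum_m \sigma_\I(m)m^{-s} = \prod_i \zeta(s+a_i)$, plus a ``dual term'' involving hyper-Kloosterman sums of rank $k$ and a $k$-fold Mellin--Barnes / Meijer-$G$ integral. Doing the same for $\sigma_\J$ splits $D_{f_r;\I,\J}(r)$ into four pieces: main$\times$main, main$\times$dual, dual$\times$main, and dual$\times$dual.

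The main$\times$main piece produces the conjectured asymptotic. Summing $e_q(-ar)$ over $a \bmod q$ with $(a,q)=1$ yields the Ramanujan sum $c_q(r)$, while the local arithmetic-progression contributions assemble prime by prime into the multiplicative functions $g_\I, g_\J$ of Definition \ref{def:functions}. The M\"obius inversion structure relating $G_A$ to $g_A$ emerges from sieving out the coprimality condition $(a,q)=1$ via the M\"obius expansion of $c_q(r)$, matching exactly \eqref{eqn:G-mult}. After summing the resulting double Mellin integral over moduli $q$ and picking up the residues at $s_1 = 1-a_{i_1}$ and $s_2 = 1-b_{i_2}$, one reads off the displayed main term, including the leading factors $\prod_{j_1\ne i_1}\zeta(1-a_{i_1}+a_{j_1})\prod_{j_2\ne i_2}\zeta(1-b_{i_2}+b_{j_2})$ and the integral $\int f_r(x,x-r)\,x^{-a_{i_1}}(x-r)^{-b_{i_2}}\,dx$.

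The remaining three cross terms must be bounded with a power saving in $X$ after averaging over $r$, $1 \leq |r| \leq H$. Here I would interchange the $r$-sum with the $(q,a)$ and dual-variable sums to exploit the cancellation in $\sum_{|r|\le H} e_q(-ar)$; the dual$\times$dual piece would then be controlled by the Weil--Deligne bound on hyper-Kloosterman sums combined with a large-sieve or Poisson-summation step, while the mixed pieces should reduce (at least formally) to shifted convolutions of strictly lower complexity, accessible by induction on $\min(k,\ell)$.

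This last step is where I expect the fundamental obstacle to sit, and it is the reason the statement is formulated as a conjecture rather than a theorem. For $(k,\ell)=(2,2)$ it is established via Kloosterman's bounds and Kuznetsov's spectral formula on $\mathrm{SL}_2(\mathbb Z)$; the cases $(3,3)$ and $(4,4)$ underpin the results of \cite{Ng} and \cite{NSW} with the exponents $\vartheta_{3,3}=\tfrac{2}{3}-\delta$ and $\vartheta_{4,4}=\tfrac{1}{2}+\varepsilon_1$ quoted in the Introduction. For general $(k,\ell)$, however, a uniform power-saving estimate on the averaged error $\sum_{|r|\le H}\Delta_{f_r;\I,\J}(r)$ is presently out of reach: one would need $\mathrm{GL}_k$-Voronoi summation of the Miller--Schmid type combined with subtle on-average cancellation of higher-rank Kloosterman sums, or an entirely new arithmetic input. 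This explains why the hypothesis is stated parametrically in the triple $(\vartheta_{k,\ell},C_{k,\ell},\beta_{k,\ell})$ rather than proved.
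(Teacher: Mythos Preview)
The statement you were asked to prove is Conjecture~\ref{conj:shifted-sum}, and the paper does not provide a proof of it: it is an unproven hypothesis that the authors \emph{assume} in order to establish Theorem~\ref{mainthm}. You correctly recognise this, and your final paragraph identifies precisely the obstruction the paper highlights in its remarks following the conjecture.

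Your heuristic derivation of the main term via a $\delta$-symbol expansion and Voronoi summation is in line with what the paper alludes to in Remark~3 (``The main term in the above conjecture can be derived using Duke, Friedlander, and Iwaniec's $\delta$-method''), and your account of the $(2,2)$ case matches Remark~4. One small correction: when you write that the $(3,3)$ and $(4,4)$ cases ``underpin the results of \cite{Ng} and \cite{NSW}'', be aware that those exponents are \emph{assumed} in those papers, not proved; the additive divisor conjecture remains open for all $(k,\ell)$ with $\min(k,\ell)\ge 3$. Your phrasing is defensible but could be misread as asserting those cases are settled.

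In short: there is nothing to compare against, because the paper offers no proof. Your sketch of the expected strategy and its point of failure is accurate and consistent with the paper's own commentary.
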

\noindent {\bf Remarks}. 
\begin{enumerate}
\item Note that  $ c_{q}(r)= \sum_{\substack{a=1 \\ (a,\ell)=1}}^{q}
e ( \frac{ar}{q})$
 is Ramanujan's sum where $e(\theta) := e^{2 \pi i \theta}$. 
\item It should be observed that from Lemma \ref{GIk} below, we can see that 
\[
  G_{\I}(1-a_{i_1},q) \approx  \sigma_{\I \backslash \{ a_{i_1} \}} (q)
  \text{ and }
  G_{\J}(1-b_{i_2},q) \approx  \sigma_{\J \backslash \{ b_{j_2} \}} (q). 
\]
\item The main term in the above conjecture can be derived using Duke, Friedlander, and Iwaniec's $\delta$-method \cite{DFI}. The conjecture provides a bound for the error term on average over $r$ which is sufficient for our purposes. The reader is referred to \cite{Iv2} and \cite{Iv3} among other references for a treatment of the additive divisor conjecture on average.
\item In the case $|\I|=|\J| =2$, Hughes and Young \cite[page~218]{HY} have proven that this holds with
$\vartheta_{2,2} = \frac{3}{4}$, $C_{2,2} = \frac{5}{4}$, and $\beta_{2,2}=1$ (even without the averaging over $r$). 
The main term in the result of Hughes and Young can be established by using
Duke, Friedlander, and Iwaniec's \cite{DFI} $\delta$-method. 
\item Using work of Aryan \cite{Ar} and Topacogullari  \cite{T1}, it may be possible to establish
$\vartheta_{2,2} = \frac{1}{2}+\vartheta_0$, where $\vartheta_0$ is the current best bound 
for the Ramanujan conjecture.  
\item Topacogullari  \cite{T1}, \cite{T2} and Drappeau \cite{Dr} have recently established asymptotic formula for the additive  divisor sums 
$D_{k,\ell}(x,r)= \sum_{n \le x} \tau_k(n) \tau_{\ell}(n+r)$ in the cases $k \ge 3$,  $\ell =2$
where $0 \ne r \in \mathbb{Z}$. It is likely that their work will lead to the additive divisor conjecture
in the case $\ell =2$ with some $\vartheta_{k,2} < 1$ and $C_{k,2} >0$. 
\item It has been conjectured by Conrey and Keating \cite[p.740]{CK3} that $k$-$\ell$ additive divisor conjecture in the unsmoothed case holds with 
$\vartheta_{k,\ell} =\frac{1}{2}$ and $\beta_{k,\ell}=1-\varepsilon_0$, for sufficiently small $\epsilon_0$.   It is thus reasonable to expect that
$\mathcal{AD}_{k,\ell}(\tfrac{1}{2},C_{k,\ell},1-\epsilon_0)$ holds for some $C_{k,\ell}>0$ and $\epsilon_0 >0$.
\end{enumerate}

The main goal of this paper is to prove that Conjecture \ref{conj:shifted-sum} implies Conjecture \ref{ck2}. More precisely, we establish the following theorem.
\begin{thm}\label{mainthm}
Let $|\I|=k$ and $|\J|=\ell$ with $k,\ell \ge 2$, and suppose that $\I$ and $\J$ satisfy \eqref{sizerestriction} and \eqref{sizerestriction-}.  Assume that 
$\mathcal{AD}_{k,\ell}(\vartheta_{k,\ell},C_{k,\ell},\beta_{k,\ell})$ holds for some triple $(\vartheta_{k,\ell}, C_{k,\ell},\beta_{k,\ell}) \in [\frac{1}{2}, 1) \times [0,\infty) \times (0,1]$. Let $K=T^{1+\eta}$ with $\eta>0$, and let $\omega$ satisfy \eqref{cond1}, \eqref{cond2}, and \eqref{cond3} with $b>\frac{(1-\beta_{k,\ell})(1+\eta)}{1-\epsilon}$ and $0<\epsilon<1$. Then we have for any $\varepsilon >0$,  
\begin{equation*}
    \mathscr{D}_{\I,\J;\omega}(K) =   \mathcal{M}_{0, \I, \J; \omega}(K)  +   \mathcal{M}_{1, \I, \J; \omega}(K) 
    +   O \Big( K^{\vartheta_{k,\ell}+\e} \Big( \frac{T}{T_0} \Big)^{1+C_{k,\ell}} \Big).
\end{equation*}
\end{thm}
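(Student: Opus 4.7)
The plan is to expand $\mathscr{D}_{\I,\J;\omega}(K)$ into a double sum, decompose it according to the difference $r=m-n$, and treat the diagonal ($r=0$) and off-diagonal ($r\neq 0$) contributions separately. Opening the Dirichlet polynomials and interchanging the sum with the $t$-integral, one obtains
\begin{equation*}
\mathscr{D}_{\I,\J;\omega}(K) = \sum_{m,n\ge 1} \frac{\sigma_{\I}(m)\sigma_{\J}(n)}{\sqrt{mn}}\,\varphi(m/K)\varphi(n/K)\,\widetilde{\omega}(\log(n/m)),
\end{equation*}
where $\widetilde{\omega}$ is the Fourier transform of $\omega$. The diagonal $m=n$ produces $\widetilde{\omega}(0)\sum_n \sigma_{\I}(n)\sigma_{\J}(n)\varphi(n/K)^2/n$, and the off-diagonal part collapses to $\sum_{r\neq 0}D_{f_r;\I,\J}(r)$ where $f_r(x,y) = (xy)^{-1/2}\varphi(x/K)\varphi(y/K)\,\widetilde{\omega}(\log(y/x))$.

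Next, I will identify the diagonal with $\mathcal{M}_{0,\I,\J;\omega}(K)$. Mellin-inverting $\varphi^{2}$ against the kernel $\Phi_{2}$ recognises the Dirichlet series $\sum_{n}\sigma_{\I}(n)\sigma_{\J}(n)n^{-1-s} = \mathcal{B}(\I_{s},\J)$ by \eqref{sigmaidentity} and \eqref{BIJeuler}, so the diagonal contribution equals the Mellin integral displayed in \eqref{M0swaps}. For the off-diagonal, I verify that $f_r$ satisfies \eqref{fsupport}--\eqref{fcond} with $X,Y\asymp K$ and $P\asymp T/T_{0}$, the factor $T/T_{0}$ arising because $\widetilde{\omega}(\log(y/x))$ varies on scale $T_{0}/T$ in $\log(y/x)\asymp r/K$. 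Then $\mathcal{AD}_{k,\ell}(\vartheta_{k,\ell},C_{k,\ell},\beta_{k,\ell})$ applies on the range $1\le |r|\le K^{\beta_{k,\ell}}$, while the tail $|r|>K^{\beta_{k,\ell}}$ is disposed of by the rapid decay of $\widetilde{\omega}$; this is precisely where the smoothness assumption $b>(1-\beta_{k,\ell})(1+\eta)/(1-\epsilon)$ is consumed.

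The core of the argument is to show that the sum over $r\neq 0$ of the main term in $\mathcal{AD}_{k,\ell}$ reconstructs $\mathcal{M}_{1,\I,\J;\omega}(K)$ of \eqref{M1swaps}. This proceeds by interchanging the sums over $r$ and $q$, using the Fourier-analytic interpretation of Ramanujan sums $c_{q}(r)$ and the fact that $f_{r}$ depends on $r$ only through the factor $\widetilde{\omega}(\log(y/x))$, to convert the outer $r$-sum back into the $t$-integral present in \eqref{M1swaps}. Mellin inversion in the $x$-variable then produces the contour over $s$. The arithmetic pieces $G_{\I}(1-a_{i_{1}},q)$ and $G_{\J}(1-b_{i_{2}},q)$, multiplied by the $\zeta$-products $\prod_{j_{1}\neq i_{1}}\zeta(1-a_{i_{1}}+a_{j_{1}})$ and $\prod_{j_{2}\neq i_{2}}\zeta(1-b_{i_{2}}+b_{j_{2}})$, must be matched with $\mathcal{Z}(\I\setminus\{a_{i}\},\{-a_{i}\})\mathcal{Z}(\{-b_{j}\},\J\setminus\{b_{j}\})$, the contour factor $\mathcal{Z}((\I\setminus\{a_{i}\})+s,\J\setminus\{b_{j}\})\,\zeta(1-a_{i}-b_{j}-s)$, and the arithmetic factor $\mathcal{A}$, using the multiplicative identities encoded in Lemma~\ref{Gmult}, Lemma~\ref{comb}, and Lemma~\ref{GIk}. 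The separation hypothesis \eqref{sizerestriction-} guarantees that none of the $\zeta$-factors has a pole in the relevant region.

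The main obstacle I expect is exactly this final identification: matching the arithmetic side produced by $\mathcal{AD}_{k,\ell}$, summed over $r$, with the single-swap Conrey--Keating expression \eqref{M1swaps}. This requires a careful Euler-product computation tying Ramanujan sums, the $G$-functions of Definition~\ref{def:functions}, and the zeta-normalisation built into $\mathcal{A}$ and $\mathcal{Z}$, together with a contour shift to separate polar from arithmetic information. A secondary bookkeeping task is to confirm that the cumulative error, namely $O(HP^{C_{k,\ell}}K^{\vartheta_{k,\ell}+\varepsilon})$ with $H=K^{\beta_{k,\ell}}$ and $P\asymp T/T_{0}$ together with the tail contribution from $|r|>K^{\beta_{k,\ell}}$, fits inside $O\bigl(K^{\vartheta_{k,\ell}+\varepsilon}(T/T_{0})^{1+C_{k,\ell}}\bigr)$ as claimed in the theorem.
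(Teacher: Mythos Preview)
Your overall architecture is correct and matches the paper, but there is a genuine technical gap that would cause the argument to fail as written.

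The additive divisor conjecture $\mathcal{AD}_{k,\ell}$ requires the test function to have support in a dyadic box $[X,2X]\times[Y,2Y]$ with $X\asymp Y$ (condition \eqref{fsupport}). Your $f_r(x,y)=(xy)^{-1/2}\varphi(x/K)\varphi(y/K)\,\widetilde{\omega}(\log(y/x))$ is supported on $(0,K']\times(0,K']$, which is not of this shape, so the conjecture cannot be invoked directly. The paper handles this by first inserting a smooth dyadic partition of unity $\sum_{M}W_0(m/M)\sum_{N}W_0(n/N)=1$, reducing to blocks $I_{M,N}$ with $M\asymp N$ (the off-diagonal blocks are negligible by decay of $\widehat{\omega}$), applying the conjecture on each block with $X=M$, $Y=N$, and only afterwards re-summing over $M,N$ to remove the partition.

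This omission also throws off your error accounting. You take $H=K^{\beta_{k,\ell}}$, but in fact the decay of $\widehat{\omega}$ already forces $|r|\ll M T_0^{-1+\epsilon}$ on each block; the condition $b>(1-\beta_{k,\ell})(1+\eta)/(1-\epsilon)$ is used to check that this range lies inside $M^{\beta_{k,\ell}}$, not the other way around. With the correct $H\asymp M/T_0^{1-\epsilon}$, the block error is $H P^{C_{k,\ell}} M^{\vartheta_{k,\ell}+\varepsilon}\asymp (M/T_0)(T/T_0)^{C_{k,\ell}} M^{\vartheta_{k,\ell}+\varepsilon}$; multiplying by the prefactor $T/\sqrt{MN}\asymp T/M$ and summing dyadically over $M\le K$ produces exactly $(T/T_0)^{1+C_{k,\ell}}K^{\vartheta_{k,\ell}+\varepsilon}$. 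Your bookkeeping with $H=K^{\beta_{k,\ell}}$ would instead give an extra factor of $K^{\beta_{k,\ell}}$, which does not match the theorem.

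Your description of the main-term identification is along the right lines but compresses several nontrivial steps: the paper Mellin-inverts both factors of $\varphi$, evaluates the resulting $x$-integral via the Beta integral to produce ratios of Gamma functions, moves contours, applies Stirling to replace these ratios by $t^{-(s_1+s_2+a_{i_1}+b_{i_2})}$ times a cosine, uses the functional equation of $\zeta$, and finally invokes the factorisation of the Dirichlet series $H_{\I,\J;\{a_{i_1}\},\{b_{i_2}\}}(s)$ (Proposition~\ref{Hid}) together with the identification $\mathcal{C}=\mathcal{A}$ (Proposition~\ref{CAprop}) to land on \eqref{M1swaps}. These are the ``careful Euler-product computation'' and ``contour shift'' you anticipate, but they require substantial analytic control (Lemmas~\ref{Stirling} and~\ref{lem:Hbounds}) that your outline does not yet supply.
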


\noindent {\bf Remarks}. 
\begin{enumerate}
\item This result provides a rigorous proof of some of the arguments in \cite{CK3}. 
The main difference is that in \cite{CK3} the authors focus on the main terms without providing bounds for any error terms. 
Another  key difference is that in \cite{CK3} Perron's formula is applied twice whereas we make use of Mellin inversion. 
\item  In order for this result to be non-trivial the error term needs to be $o(T)$.  
This is the case if $\omega$ satisfies \eqref{cond3} below with
 $b>\frac{C_{k,\ell}+(\vartheta_{k,\ell}+\varepsilon)(\eta+1)}{1+C_{k,\ell}}$.  Observe that since we require $b \le 1$, this condition implies $\eta<\frac{1}{\vartheta_{k,\ell}}-1$.
 If the additive divisor conjecture is true with $\vartheta_{k,\ell}= \frac{1}{2}$, then this theorem allows one to take Dirichlet polynomials with length $K=T^{c}$, for any $c <2$. 
\item In the case $k=\ell=2$, this is an unconditional theorem, due to the work of Hughes-Young \cite{HY}
as they have established $\mathcal{AD}_{2,2}(\tfrac{3}{4},\tfrac{5}{4},1)$.  In this case, we have an asymptotic formula for $0 < \eta < \frac{1}{3}$.   Using ideas from \cite{Ar}, it may be possible to increase the range to $\eta \in (0, 0.61)$.  Furthermore, 
if the Ramanujan conjecture on the size of the Fourier coefficients of Maass forms is true, then the
range can be increased to $\eta \in (0,1)$.  
\item We expect to establish this theorem unconditionally in the cases $k \ge 3$ and $\ell=2$, by using
techniques from \cite{T1}, \cite{T2}, and \cite{Dr} and this is current work in progress.  
\end{enumerate}

As a consequence of the work in this article, we deduce in an accompanying article \cite{HN} the special case $k=\ell=2$ and $\eta \in (0,\frac{1}{3})$ of Conjecture \ref{conjCG} with all lower order terms and a power savings
in the error term.   Moreover, we expect to deduce a version of Conjecture  \ref{conjCG} with the full main term and a  power savings error term for all $k \ge 2$ and $\ell=2$ for 
some range of $\eta$.  We have 
\begin{thm} \label{thm22}
Let $K=T^{1+\eta}$ with $0 <\eta<\frac13$. Let $\omega$ satisfy \eqref{cond1}, \eqref{cond2} and \eqref{cond3} with $b>\frac{5+3(\eta+1)}{9}$. Then we have  
\begin{equation*}
      \mathscr{D}_{2,2;\omega}(K) = 
      \int_{-\infty}^{\infty} \omega(t) \Bigg( \sum_{i=0}^{4} Q_i(\log K, \log \tfrac{t}{2 \pi} )  \Bigg) \, dt 
      + O\left(T^{\frac34(1+\eta)+\e}\left(\frac{T}{T_0}\right)^{\frac94}+T^{1-\frac{\eta}{2}}\right) ,\end{equation*}
 where the $Q_i(x,y) \in \mathbb{R}[x,y]$ are polynomials of degree $i$ and the leading term is given by 
 \begin{equation*}
   Q_4(x,y) =  \frac{1}{\zeta(2)}
 \cdot \frac{1}{4!} (-x^4 +8x^3y-24x^2 y^2 +32 xy^3 -14 y^4). 
 \end{equation*}
 Precise formulae for the  $Q_i$ are given in \cite{HN}. 
  \end{thm}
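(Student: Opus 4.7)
The plan is to deduce Theorem \ref{thm22} from Theorem \ref{mainthm} by a limiting argument in the shifts. Hughes and Young \cite{HY} established the additive divisor hypothesis $\mathcal{AD}_{2,2}(\tfrac34,\tfrac54,1)$ unconditionally, so Theorem \ref{mainthm} applied with $k=\ell=2$ to any multisets $\I=\{a_1,a_2\}$, $\J=\{b_1,b_2\}$ of distinct complex numbers satisfying \eqref{sizerestriction} and \eqref{sizerestriction-} yields
\[
\mathscr{D}_{\I,\J;\omega}(K) = \mathcal{M}_{0,\I,\J;\omega}(K) + \mathcal{M}_{1,\I,\J;\omega}(K) + O\!\bigl(T^{\tfrac34(1+\eta)+\varepsilon}(T/T_0)^{9/4}\bigr).
\]
The constraint $b>(5+3(\eta+1))/9$ in Theorem \ref{thm22} is precisely the condition from Remark~2 after Theorem \ref{mainthm} with $(\vartheta_{2,2},C_{2,2})=(3/4,5/4)$, and it guarantees both that the hypothesis on $\omega$ is satisfied and that the displayed error is $o(T)$.

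Next, I would pass from the shifted polynomial to the unshifted one by letting the shifts tend to zero. Since $\sigma_{\I}(n)\to\tau_2(n)$ pointwise as $\I\to\{0,0\}$ and since $\omega$ has rapid decay, dominated convergence gives $\mathscr{D}_{\I,\J;\omega}(K)\to\mathscr{D}_{2,2;\omega}(K)$. On the right-hand side, the individual terms $\mathcal{M}_{0,\I,\J;\omega}(K)$ and $\mathcal{M}_{1,\I,\J;\omega}(K)$ develop poles when shifts collide, coming from the zeta factors visible in \eqref{M0swaps}, \eqref{BIJcase2}, and \eqref{M1swaps}, but these poles cancel between the two terms in the familiar CFKRS manner \cite{CFKRS}, so the sum $\mathcal{M}_0+\mathcal{M}_1$ extends holomorphically to $\I=\J=\{0,0\}$.

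To extract the explicit polynomial, combine $\mathcal{M}_0+\mathcal{M}_1$ into a single multi-variable contour integral in $s$ (and in the shifts) and move the $s$-contour past $s=0$, collecting residues at the points $s=-a_i-b_j$ where $\zeta(1+s+a_i+b_j)$ has its pole. Evaluated at $\I=\J=\{0,0\}$, the residues produce the polynomial $\sum_{i=0}^{4}Q_i(\log K,\log(t/2\pi))$ of degree $k\ell=4$. The Euler factor $\mathcal{A}(\{0,0\},\{0,0\})=\prod_{p}(1-p^{-1})^{4}\sum_{u\ge0}\tau_2(p^u)^{2}p^{-u}=1/\zeta(2)$ supplies the prefactor $1/\zeta(2)$ in $Q_4$, and a short combinatorial computation pins down the specific form $\tfrac{1}{4!}(-x^4+8x^3y-24x^2y^2+32xy^3-14y^4)$. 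The remaining line integral, bounded on $\Re(s)=-\eta/(2(1+\eta))$ using the rapid decay of $\Phi_2$ and $|\zeta(1+it)|\ll\log(2+|t|)$, contributes the secondary error $T^{1-\eta/2}$.

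The main technical obstacle is the algebraic bookkeeping to identify every $Q_i$ explicitly and to verify the pole-cancellation between $\mathcal{M}_0$ and $\mathcal{M}_1$. This is essentially the $k=\ell=2$ instance of the CFKRS recipe adapted to long Dirichlet polynomials, and the full execution (including explicit formulae for all lower-order $Q_i$) is carried out in the companion paper \cite{HN}.
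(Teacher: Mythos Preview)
Your proposal is correct and matches the paper's approach: the paper itself does not prove Theorem~\ref{thm22} in the text but states it as a consequence of Theorem~\ref{mainthm} combined with Hughes--Young's unconditional $\mathcal{AD}_{2,2}(\tfrac34,\tfrac54,1)$, deferring the residue computation and the extraction of the $Q_i$ to the companion paper \cite{HN}. Your outline---specialize Theorem~\ref{mainthm} to $k=\ell=2$, remove the shifts (the paper notes in Section~\ref{sec:off-diag} that this is done via Cauchy's integral formula as in \cite[pp.~20--21]{Ng} rather than a direct limit, which handles the uniformity issue cleanly), shift the $s$-contour in $\mathcal{M}_0+\mathcal{M}_1$ past the pole at $s=0$ to produce the degree-$4$ polynomial with arithmetic factor $\mathcal{A}(\{0,0\},\{0,0\})=1/\zeta(2)$ from \eqref{AIJs}, and bound the remaining line integral on $\Re(s)=-\eta/(2(1+\eta))$ to get the $T^{1-\eta/2}$ term---is exactly the intended deduction.
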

 We note here that $Q_4(x,y)=\frac{1}{\zeta(2)}\frac{1}{4!}y^4w_{2,2}(x/y)$, where $w_{2,2}$ is the polynomial given in \eqref{wklx} with $k=\ell=2$.


We now explain how our results relate to the previous literature on mean values of Dirichlet polynomials. Setting $\varphi_{0} = \mathbb{1}_{[0,1]}$ in \eqref{ABphi}
we get the following Dirichlet polynomials associated to the real sequences $\{a(n)\}$ and $\{b(n)\}$:
\begin{equation}
  \label{AB} 
 \mathds{A}(s) :=  \mathds{A}_{a,\varphi_{0}}(s) = \sum_{n \le K} \frac{a(n)}{n^s} \text{ and } 
  \mathds{B}(s):= \mathds{B}_{b,\varphi_{0}}(s) = \sum_{n \le K} \frac{b(n)}{n^s}.
\end{equation}
Set $\omega(t)$ to be $\omega_0(t) = \mathbb{1}_{[0,T]}(t)$ \footnote{ $\mathbb{1}_B(t)$ denotes the indicator function corresponding to $B \subset \mathbb{R}$.} in \eqref{DabwK}.
A standard tool in analytic number theory is the mean value estimate
\begin{equation*}
   \mathcal{D}_{a,a;\omega_0} =\int_{\mathbb{R}}\omega_0(t)\mathds{A}(\tfrac{1}{2}+it)|^2\;dt= \int_{0}^{T} | \mathds{A}(\tfrac{1}{2}+it)|^2\;dt =  \sum_{n \le K} \frac{a(n)^2}{n} (T + O(n)),
\end{equation*}
which follows from the work of Montgomery and Vaughan \cite[Corollary 3]{MV}. 
When $K=o(T)$ this implies
$\mathcal{D}_{a,a;\omega_0} \sim T  \sum_{n \le K} \frac{a(n)^2}{n}$,
and this is referred to as the `diagonal contribution'. In the case that $K \gg T$, this is not always the 
correct asymptotic.  Goldston and Gonek \cite{GG} studied  $\mathcal{D}_{a,b;\omega}$
 when $K \gg T$ for a certain smooth weight 
 $\omega$  supported in $[c_1T, c_2T]$ with $0<c_1 < c_2$. They showed that 
$ \mathcal{D}_{a,b;\omega}$ is intimately related to the correlation sums 
$\mathscr{C}_{a,b}(x,r) =  \sum_{n \le x} a(n) b(n+r)$.
They assume uniform formulae of the type
$ \mathscr{C}_{a,b}(x,r) =  \mathscr{M}_{a,b}(x,r) + \mathscr{E}_{a,b}(x,r)$
where $\mathscr{M}_{a,b}(x,r)$ is a main term and $ \mathscr{E}_{a,b}(x,r)$ is an error term.  
They show that when $K \gg T$ the main term of  $\mathcal{D}_{a,b;\omega}$ also contains an  additional 
off-diagonal contribution which arises from certain averages of $ \mathscr{M}_{a,b}(x,r)$
and that the error term is related to averages of $\mathscr{E}_{a,b;h}(x)$
(see \cite[Theorems 1,2, Corollary 1]{GG}).  

In this article we only consider   $\mathscr{D}_{a,b;  \omega}(K)$
in the case $a=\sigma_{\I}$ and $b=\sigma_{\J}$.  A key difference in our approach 
as opposed to \cite{GG} is that we make use of the smoothed additive sums $D_{f;\I,\J}(r)$ rather  than unsmoothed sums $\mathscr{C}_{a,b}(x,r)$.   
We handle the off-diagonals by using smooth partitions of unity so that we can deal with additive divisor sums 
of the type  $D_{f;\I,\J}(r)$.  Goldston and Gonek \cite{GG} treat the off-diagonals via partial summation and the additive divisor sums $\mathscr{C}_{a,b}(x,r)$.   A disadvantage of using the unsmoothed correlation sums 
is that most proofs of asymptotic formula of  $\mathscr{C}_{a,b}(x,r)$ begin with considering a smoothed version of such sums.  At the end of such arguments the smooth function is removed and this increases
the size of the error term $\mathscr{E}_{a,b}(x,r)$.  

It is natural to wonder if the mean values $ \mathscr{D}_{a,b;  \omega}(K)$ can be computed for other sequences.  The main fact we have used is that generalized
divisor functions are coefficients of $L$-functions (in the sense of Selberg).  Here they correspond to the non-primitive $L$-functions 
$\zeta(s+a_1) \cdots \zeta(s+a_k)$ and $\zeta(s+b_1) \cdots \zeta(s+b_{\ell})$. 
A natural generalization would be to consider sequences $a(n),b(n)$ which are the  Dirichlet series coefficients of 
$L(s+a_1, \pi_1) \cdots L(s+a_k, \pi_k)$ and 
 $L(s+b_1, \pi_1') \cdots L(s+b_{\ell}, \pi_{\ell}')$ where the $L(s,\pi_i)$ and $L(s, \pi_j')$ are automorphic $L$-functions. 
It seems likely that the approach in this article
would lead to an asymptotic evaluation of  $ \mathscr{D}_{a,b;  \omega}(K)$, subject to a suitable additive divisor conjecture. 
In some cases, the additive divisor conjecture is known to be true. 
For instance, when $a(n)=b(n)=r_2(n)$ with $r_2(n)$ being the sum of two squares function, the conjecture holds (see \cite[page~174]{Iwaniec-spectral}).   


\subsection{Conventions and Notation} \label{convnot}
Given two functions $f(x)$ and $g(x)$, we shall interchangeably use the notation  $f(x)=O(g(x))$, $f(x) \ll g(x)$, and $g(x) \gg f(x)$  to mean there exists $M >0$ such that $|f(x)| \le M |g(x)|$ for all sufficiently large $x$. 
We write $f(x) \asymp g(x)$ to mean that the estimates $f(x) \ll g(x)$ and $g(x) \ll f(x)$ simultaneously hold.  
  
In this article we shall use the convention that $\varepsilon$ denotes an arbitrarily small positive constant which may vary from instance to instance.  
In addition, $B$ shall denote a positive constant, which may be taken arbitrarily large and which may change from line to line. 
The letter $p$ will always be used to denote a prime number.
For a function $\varphi : \mathbb{R}^{+} \times \mathbb{R}^{+} \to \mathbb{C}$, $\varphi^{(m,n)}(x,y) = \frac{\partial^m}{\partial x^m}
\frac{\partial^n}{\partial y^n} \varphi(x,y)$.  
The integral notation $\int_{(c)} f(s) ds$ for a complex function $f(s)$ and $c \in \mathbb{R}$
 will be used frequently and is defined by  the following contour integral
\begin{equation}
   \label{intc}
  \int_{(c)}f(s) ds = \int_{c-i\infty}^{c + i \infty} f(s) \, ds. 
\end{equation}
In this article we shall consider $s \in \mathbb{C}$ and  usually we shall write its real part as $\sigma =\Re(s)$. 
Throughout this article we often use the fact that $\omega(t)$ has support in
$[c_1 T, c_2T]$ so that $t \asymp T$. For a polynomial $P(X_1, \ldots, X_m) \in \mathbb{C}[X_1, \ldots, X_m]$ 
we write $\text{deg}(P)$ to denote its degree.   \\

\noindent {\it Acknowledgements}.  Thank-you to Dave Morris and Gabriel Verret for helping us with the proof of Lemma 
\ref{comb}. 

\section{Preliminaries}\label{section:weights}

In this section we describe the smooth weights 
$\varphi$ which occur in \eqref{ABphi} and 
$\omega$ which occur in \eqref{DabwK}.  
We should think of $\varphi$ as a smooth approximation to $\mathbb{1}_{[0,K]}$
and $\omega$ as a smooth approximation to  $\mathbb{1}_{[T,2T]}$.
The function $\varphi$ is used so that we can work with the  Mellin transform $\Phi(s)$ of $\varphi$ instead of Perron's formula
(see \cite{FK} and \cite{NT} for other such uses of the Mellin transform in  analytic number theory).  
The smoothness of $\varphi$ allows integrals in the $s$-variable to be absolutely convergent.  
Similarly the weight $\omega$ is used so that its Fourier transform $\widehat{\omega}(u)$ is small if $u$ satisfies 
$|u| \gg T_{0}^{1-\e}$, where $T_0$ is a parameter satisfying of $T^{b} \ll T_0 \ll T$ for some $b>0$. 
In certain situations it is standard to remove the weight $\omega$ from the integral and replace it with a sharp cutoff 
function (for instance, if the integrand is positive).   Previously, the use of functions such as $\omega$ can be found in the works
\cite{BCHB} and \cite{HB}.

\subsection{Properties of $\omega$} \label{ss-omega}
Let $b$ be a positive absolute constant, and let
$\omega$ be a function from $\mathbb{R}$ to $\mathbb{C}$  that satisfies the following:
\begin{align}
 & \label{cond1}  \omega \text{ is smooth}, \\
& \label{cond2} \text{the support of }  \omega  \text{ lies in } [c_1T,c_2T]  \text{ where } 0 < c_1 < c_2,\\
& \label{cond3} \text{there exists } T_0  \ge T^{b} 
\text{ such that } T_0 \ll T \text{ and }
  \omega^{(j)}(t) \ll T_{0}^{-j}.  
\end{align}
The Fourier transform of $\omega$ is 
\begin{equation}
  \label{ft}
  \widehat{\omega}(u) = \int_{\mathbb{R}} \omega(t) e^{-2 \pi i ut} dt. 
\end{equation}
Integrating by parts $j$ times and using the second part of  \eqref{cond3} we see that 
\begin{equation}\label{omegaftbound}
  \widehat{\omega}(u) = \frac{1}{(2 \pi i u)^j} \int_{\mathbb{R}} \omega^{(j)}(t) e^{-2 \pi i u t} dt \ll  \frac{T}{(T_0 u)^j}. 
\end{equation}
Thus,
\begin{equation}
  \label{whatbound}
\text{ if }  |u| \gg T_{0}^{-1+\e}, \ \text{ then }
 | \widehat{\omega}(u)|  \ll T^{-A} \text{ for any } A >0
\end{equation}
by choosing $j  \ge \frac{A+1}{b \e}$.

\subsection{Properties of $\varphi$} \label{ss-phi}
Let $\varrho  \in (0,\frac{1}{2})$.  Let $\varphi$ be a smooth, non-negative function defined on $\R_{\ge 0}$ such that it equals one on $[0,1]$ and zero on $[1+\varrho,\infty)$ and for all $j \ge 0$
\begin{equation}\label{phibound}
  \varphi^{(j)}(t) \ll \varrho^{-j}. 
\end{equation}
Its Mellin transform is
\begin{equation}
   \label{Phi}
  \Phi(s) =\int_{0}^{\infty} \varphi(t) t^{s-1} dt,
\end{equation}
which converges absolutely for $\Re(s) >0$.  
By Mellin inversion, 
\begin{equation}
  \label{melinv}
  \varphi(t) = \frac{1}{2 \pi i} \int_{(c)} \Phi(s) t^{-s} ds
\end{equation}
for $c >0$.  
We now study $\Phi(s)$ further.  
Integrating by parts,  
\begin{equation}
  \label{Psidefinition}
 \Phi(s) = \frac{1}{s} \Psi(s) \text{ where } \Psi(s) = -\int_{0}^{\infty}  \varphi'(t) t^s dt.
\end{equation}
This is valid  for $\Re(s) >0$.  It may be shown that $\Psi(s)$ is entire. 
Thus $\Phi(s)$ is holomorphic  on $\mathbb{C}$ with the exception of a simple pole at $s=0$.  
We have the Laurent expansion
\begin{equation}
  \label{Philaurexp}
   \Phi(s) = \frac{\Psi(0)}{s} + \Psi'(0) + \frac{\Psi''(0)}{2} s + \cdots. 
\end{equation}
Note that 
\[
    \Psi(0) = -\int_{0}^{\infty}  \varphi'(t)  dt = \varphi(0)-\varphi(1+\varrho) =1. 
\]
Next we provide useful bounds for $\Phi$.  
Integrating \eqref{Phi} by parts $m$ times, we find that 
\[
  \Phi(s) = \frac{(-1)^m}{s(s+1) \cdots (s+m-1)} \int_{0}^{\infty} \varphi^{(m)}(t) t^{s+m-1} dt,
\]
which is valid for all $s \in \mathbb{C} \setminus \{ 0 \}$. Note that for $m\ge 2$ the integrand has simple zeros at $s=-1, \ldots, -(m-1)$.    
Thus, for $m \ge 1$ and $s \in \mathbb{C} \setminus \{ 0, -1, \ldots, -(m-1) \}$, 
\begin{equation}
\begin{split}
  \label{Phibd}
  |\Phi(s)| & \le \frac{1}{|s(s+1) \cdots (s+m-1)|} 
   \int_{1}^{1+\varrho} |\varphi^{(m)}(t)| t^{\sigma+m-1} dt 
   \ll_{m}  \frac{\varrho^{1-m} (1+\varrho)^{\sigma+m-1} }{|s(s+1) \cdots (s+m-1)|}. 
\end{split}
\end{equation}
Observe that this implies 
\begin{equation}
   \label{Phibd2}
    |\Phi(s)| \ll_{\sigma,\varrho,m} |\Im(s)|^{-m} \text{ for } |\Im(s)| \ge 1.   
\end{equation}
Let $c >0$.  For  $\Re(s) >c$, we define
\begin{equation}
  \label{phi2}
  \Phi_2(s) = \frac{1}{2 \pi i} \int_{(c)} \Phi(s_1) \Phi(s-s_1) \, ds_1. 
\end{equation}
We shall encounter this function frequently.  Observe that by a version of the convolution formula (see \cite[eq. (3.1.14), p. 83]{PK})
\begin{equation}
   \label{phi2mtimt}
     \Phi_2(s) = \int_{0}^{\infty} \varphi(t)^2 t^{s-1} \, dt
     \text{ and }
      \varphi(t)^2 = \frac{1}{2 \pi i} \int_{(c)} \Phi_2(s) t^{-s} \, ds 
      \text{ for } c>0.
\end{equation}
Note that 
\begin{equation*}
     \Phi_2(s) -\Phi(s)
     = \int_{1}^{1+\rho} \varphi(t)(\varphi(t)-1)t^{s-1} \, dt.
\end{equation*}
By \cite[Ch. 5, p.108]{Co} it follows that the right hand side is an entire function. 
Thus we see that $\Phi_2(s)$ has a simple pole at $s=0$ and $\Phi_2(s) = \Phi(s) +R(s)$
 with $R(s)$ an entire function.  
\subsection{The Dirichlet series $Z_{\I,\J}(s)$}
In this article we shall encounter the Dirichlet series
\begin{equation}\label{ZIJs}
Z_{\I,\J}(s)=\sum_{m=1}^{\infty}\frac{\sigma_{\I}(m)\sigma_{\J}(m)}{m^{1+s}}.
\end{equation}
From \eqref{BIJ} and \eqref{BIJid},  we have the alternate expression
\begin{equation}\label{eqn:Z-sum-CK}
Z_{\I,\J}(s)=\mathcal{B}(\I_{s},\J)=\mathcal{Z}(\I_{s},\J) \mathcal{A}(\I_{s},\J),
\end{equation}
where the first equality follows from \eqref{sigmaidentity} and 
$\mathcal{Z}$ and $\mathcal{A}$ are defined in Definition \ref{ZA}.
For example, when  $\I=\{a_1,a_2\}$ and $\J=\{b_1,b_2\}$, 
it follows from a formula of Ramanujan
(see \cite[Eq~1.3.3]{Ti})
that
\[
   Z_{\I,\J}(s) = \frac{\zeta(1+s+a_1+b_1)\zeta(1+s+a_1+b_2)\zeta(1+s+a_2+b_1)\zeta(1+s+a_2+b_2)}{\zeta(2+2s+a_1+a_2+b_1+b_2)},
\]
and so
\begin{equation}
  \label{AIJs}
   \mathcal{A}(\I_s,\J) = \frac{1}{\zeta(2+2s+a_1+a_2+b_1+b_2)}.
\end{equation}The next lemma gives an analytic continuation of $Z_{\I,\J}(s)$ and demonstrates that it has simple poles at 
\begin{equation}
   \label{poles}
   s=-a_i-b_j \text{ for } i \in \K, j \in \Ll,
\end{equation}
as long as the elements are distinct.
\begin{lem}\label{lem:Z-sum}
Let $\delta \in (0,\frac{1}{2})$. 
Assume that $\I$ and $\J$ satisfy \eqref{sizerestrictiondelta}. 
For $\Re(s) > 2 \delta$, we have that 
\begin{equation}
  \label{ZIJfact}
Z_{\I,\J}(s)= \Big( \prod_{ \substack{ i \in \K \\ j \in \Ll} }\zeta(1+s+a_i+b_j) \Big) A_{\I,\J}(s),
\end{equation}
where 
$A_{\I,\J}(s)=\mathcal{A}(\I_s,\J)$ is holomorphic in $\Re(s) > - \frac{1}{2}+2\delta$.  
\end{lem}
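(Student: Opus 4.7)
The plan is to establish the factorization by putting together identities already introduced, and then to verify the holomorphy claim by a direct Euler-product analysis showing cancellation of the dangerous linear term at each prime.

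First I would identify $Z_{\I,\J}(s)$ with an object already on the table. Using \eqref{sigmaidentity} one has $\sigma_{\I_s}(m) = m^{-s}\sigma_{\I}(m)$, so that
\begin{equation*}
Z_{\I,\J}(s) = \sum_{m=1}^{\infty} \frac{\sigma_{\I_s}(m)\sigma_{\J}(m)}{m} = \mathcal{B}(\I_s,\J)
\end{equation*}
for $\Re(s)>2\delta$ (where absolute convergence is ensured by $|a_i+b_j|\le 2\delta<1$). Applying \eqref{BIJid} and \eqref{ZIJid} to the shifted multisets gives
\begin{equation*}
Z_{\I,\J}(s) = \mathcal{Z}(\I_s,\J)\,\mathcal{A}(\I_s,\J) = \prod_{\substack{i\in\K\\ j\in\Ll}}\zeta(1+s+a_i+b_j)\cdot A_{\I,\J}(s),
\end{equation*}
which is the stated factorization with $A_{\I,\J}(s):=\mathcal{A}(\I_s,\J)$.

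The heart of the proof is the holomorphy of $A_{\I,\J}(s)$ in $\Re(s)>-\tfrac12+2\delta$, and this is where I expect the real work to lie. From Definition~\ref{ZA},
\begin{equation*}
A_{\I,\J}(s) = \prod_p L_p(s),\qquad L_p(s) = \prod_{i,j}\bigl(1-p^{-1-s-a_i-b_j}\bigr)\sum_{u\ge 0}\frac{\sigma_{\I}(p^u)\sigma_{\J}(p^u)}{p^{u(1+s)}},
\end{equation*}
after inserting $\sigma_{\I_s}(p^u)=p^{-us}\sigma_{\I}(p^u)$. I would Taylor expand both factors in powers of $p^{-1}$: from $\sigma_{\I}(p)=\sum_i p^{-a_i}$ and $\sigma_{\J}(p)=\sum_j p^{-b_j}$, the $u=1$ term of the Dirichlet sum contributes exactly $\sum_{i,j}p^{-1-s-a_i-b_j}$, which is precisely the negative of the linear part of $\prod_{i,j}(1-p^{-1-s-a_i-b_j})$. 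Thus the terms of order $p^{-1}$ in $L_p(s)$ cancel identically, and using the trivial bound $|\sigma_{\I}(p^u)|\le\tau_k(p^u)p^{u\delta}$ (and analogously for $\sigma_\J$) together with the size hypothesis \eqref{sizerestrictiondelta} I would conclude
\begin{equation*}
L_p(s) = 1 + O\!\left(p^{-2-2\Re(s)+4\delta}\right).
\end{equation*}
The implied constants here depend only on $k,\ell,\delta$, uniformly in $s$ in any compact subset of $\Re(s)>-\tfrac12+2\delta$.

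Finally, since the resulting exponent $2+2\Re(s)-4\delta$ exceeds $1$ exactly when $\Re(s)>-\tfrac12+2\delta$, the Euler product $\prod_p L_p(s)$ converges absolutely and uniformly on compacta there, producing a holomorphic function. A standard Weierstrass argument then gives the analytic continuation of $A_{\I,\J}(s)$ to this half-plane and hence the meromorphic continuation of $Z_{\I,\J}(s)$, with the poles visible only through the zeta factors in \eqref{ZIJfact}. The main subtlety is bookkeeping the cancellation of the $p^{-1}$ term cleanly when $\I$ or $\J$ has repeated entries (so $\sigma_\I(p)$ may be a sum of equal terms): this is purely a cosmetic issue since the identity $\sigma_\I(p)=\sum_i p^{-a_i}$ and the linear part of the Euler product match term-by-term regardless of multiplicity.
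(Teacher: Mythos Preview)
Your proposal is correct and follows essentially the same approach as the paper: both arguments reduce to showing that the local Euler factor $L_p(s)$ satisfies $L_p(s)=1+O(p^{-2-2\Re(s)+4\delta})$ via the cancellation of the linear term $\sum_{i,j}p^{-1-s-a_i-b_j}$ between the product and the Dirichlet sum. The only cosmetic difference is that you obtain the factorization \eqref{ZIJfact} by quoting the identities \eqref{BIJid} and \eqref{ZIJid} up front, whereas the paper re-derives it directly from multiplicativity before carrying out the same local estimate.
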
  
\begin{proof}
We let $\sigma = \Re(s)$.  
By multiplicativity we have 
\begin{equation*}
  Z_{\I,\J}(s)=\prod_{p} \sum_{u=0}^{\infty} \frac{\sigma_{\I}(p^u) \sigma_{\J}(p^u)}{p^{u(1+s)}}
  = \prod_{p} \Big(  1 + \sum_{ \substack{i \in \K \\ j \in \Ll} }
  p^{-1-s-a_i-b_j} +
   \sum_{u=2}^{\infty} \frac{\sigma_{\I}(p^u) \sigma_{\J}(p^u)}{p^{u(1+s)}}
\Big). 
\end{equation*}
We now factor out $\prod_{\substack{ i \in \K \\ j \in \Ll}  } \prod_{p}   (1 - p^{-1-s-a_i -b_j})^{-1}$ to obtain 
\begin{equation*}
   Z_{\I,\J}(s)=\prod_{\substack{i \in \K \\ j \in \Ll}}\zeta(1+s+a_i+b_j)A_{\I,\J}(s),
\end{equation*}
where $A_{\I,\J}(s) = \prod_{p}A_{p, \I,\J}(s)$ and 
\begin{equation*}
\begin{split}
    A_{p, \I,\J}(s) & =  \prod_{ \substack{i \in \K  \\ j \in \Ll}  }(1-p^{-1-s-a_i-b_j})\Big(  1 + \sum_{ \substack{i \in \K \\ j \in \Ll} }
  p^{-1-s-a_i-b_j} +
   \sum_{u=2}^{\infty} \frac{\sigma_{\I}(p^u) \sigma_{\J}(p^u)}{p^{u(1+s)}}
\Big).   
\end{split}
\end{equation*}
Expanding out and using \eqref{sizerestrictiondelta} the first factor is 
\begin{equation}
\begin{split}
  \label{factor1}
   \prod_{ i\in \K,  j \in \Ll  }(1-p^{-1-s-a_i-b_j})
   & = 1 -  \sum_{ i \in \K, j \in \Ll }
  p^{-1-s-a_i-b_j}  + O \Big( 
  \sum_{u=1}^{k \ell} p^{-2u -2u \sigma +4 j \delta}
  \Big) \\
  & = 1 -  \sum_{ i \in \K, j \in \Ll }
  p^{-1-s-a_i-b_j}  + O \Big( 
 p^{-2 -2 \sigma +4 \delta}
  \Big) 
\end{split}
\end{equation}
if $\sigma > 2\delta -1$. 
Using \eqref{sizerestrictiondelta} and the fact that 
$\sigma_{\I}(p^u) \ll_{k,\ell}  p^{u \delta}$, we find that 
\begin{equation*}
\begin{split}
   \sum_{u=2}^{\infty} \frac{\sigma_{\I}(p^u) \sigma_{\J}(p^u)}{p^{u(1+s)}} 
  \ll  \sum_{u=2}^{\infty} \frac{p^{2u\delta}}{ p^{u(1+\sigma)}}
  \ll   p^{-2-2\sigma+4\delta}.
\end{split}
\end{equation*}
It follows that 
\begin{equation*}
\begin{split}
    A_{p, \I,\J}(s) & = \Big(  1 -  \sum_{ i \in \K, j \in \Ll }
  p^{-1-s-a_i-b_j}  + O \Big( 
 p^{-2 -2 \sigma +4 \delta}
  \Big) \Big) 
   \Big(  1 + \sum_{ \substack{i \in \K \\ j \in \Ll} }
  p^{-1-s-a_i-b_j} + O   \Big(  p^{-2-2\sigma+4\delta} \Big) \Big)
   \\ 
    & = 1 + O_{k,\ell} \Big(  p^{-2-2\sigma+4\delta} \Big).
\end{split}
\end{equation*}
Hence, $A_{\I,\J}(s) = \prod_{p}A_{p, \I,\J}(s)$ converges absolutely if $\sigma>-\frac12+2\delta$.

\end{proof}

\section{Setting up the evaluation of  $\mathscr{D}_{\I,\J;\omega}(K)$}

Let us begin our evaluation of $\mathscr{D}_{\I,\J;\omega}(K)$.  
By splitting into diagonal terms $m=n$ and off-diagonal terms $m \ne n$ we have 
\begin{equation}
\begin{split}
  \label{decomposition}
 \mathscr{D}_{\I,\J;\omega}(K)  
  & =\sum_{m,n=1}^{\infty} \frac{\sigma_{\I}(m) \sigma_{\J}(n) \varphi(\frac{m}{K}) \varphi(\frac{n}{K})}{\sqrt{mn}}
  \widehat{\omega}(\tfrac{1}{2 \pi}\log(\tfrac{m}{n}))
  = \mathscr{D}_{\I,\J;\omega}^{\text{diag}}(K) + \mathscr{D}_{\I, \J;\omega}^{\text{off}}(K)
\end{split}
\end{equation}
where 
\begin{equation}
 \begin{split}
     \label{DIJdoff}
      \mathscr{D}_{\I,\J;\omega}^{\text{diag}}(K)
     = \widehat{\omega}(0) \sum_{m=1}^{\infty} \frac{\sigma_{\I}(m) \sigma_{\J}(m) \varphi^{2}(\frac{m}{K})}{m},  \, 
 \mathscr{D}_{\I, \J;\omega}^{\text{off}}(K)    = \sum_{m \ne n}  \frac{\sigma_{\I}(m) \sigma_{\J}(n) \varphi(\frac{m}{K}) \varphi(\frac{n}{K})}{\sqrt{mn}}
  \widehat{\omega}(\tfrac{1}{2 \pi}\log(\tfrac{m}{n})).
\end{split}
\end{equation}
First we examine the contribution of $\mathscr{D}_{\I, \J;\omega}^{\text{diag}}(K)$. Using \eqref{phi2} and \eqref{phi2mtimt}, we have for $c >0$
\begin{equation*}
\begin{split}
\mathscr{D}_{\I, \J;\omega}^{\text{diag}}(K)
&=\widehat{\omega}(0) \sum_{m=1}^{\infty} \frac{\sigma_{\I}(m) \sigma_{\J}(m) \varphi^{2}(\frac{m}{K})}{m}
 = \frac{\widehat{\omega}(0) }{2 \pi i}
 \sum_{m=1}^{\infty}  \frac{\sigma_{\I}(m) \sigma_{\J}(m)}{m}
 \int_{(2c)}   \Phi_2(s)  \Big( \frac{m}{K} \Big)^{-s} \, ds.
\\
\end{split}
\end{equation*}
Swapping summation and integration order we obtain 
\begin{equation}
\begin{split}
  \label{diagonal}
\mathscr{D}_{\I, \J;\omega}^{\text{diag}}(K) 
&=\frac{\widehat{\omega}(0)}{2\pi i}\int_{(2c)}K^{s}\Phi_2(s)
Z_{\I,\J}(s)
\;ds =   \mathcal{M}_{0, \I, \J; \omega}(K) ,
\end{split}
\end{equation}
where $Z_{\I,\J}(s)$ is defined in \eqref{ZIJs} and $ \mathcal{M}_{0, \I, \J; \omega}(K)$ is defined in \eqref{M0swaps}. Observe that these two expressions are equal since by \eqref{eqn:Z-sum-CK} we have $Z_{\I,\J}(s)=\mathcal{B}(\I_s,\J)$.

The key part of the calculation of $ \mathscr{D}_{\I,\J;\omega}(K)$ is that of the off-diagonal term
$\mathscr{D}_{\I, \J;\omega}^{\text{off}}(K)$.  We establish the following result. 

\begin{prop}
  \label{offdiagonal}
Let $|\I|=k$ and $|\J|=\ell$ with $k,\ell \ge 2$, and suppose that $\I$ and $\J$ satisfy \eqref{sizerestriction} and \eqref{sizerestriction-}.  Assume that $\mathcal{AD}_{k,\ell}(\vartheta_{k,\ell},C_{k,\ell},\beta_{k,\ell})$ holds for some triple $(\vartheta_{k,\ell}, C_{k,\ell},\beta_{k,\ell}) \in [\frac{1}{2}, 1) \times [0,\infty) \times (0,1]$. Let $K=T^{1+\eta}$  with $\eta>0$, and let $\omega$ satisfy \eqref{cond1}, \eqref{cond2}, and \eqref{cond3} with 
$b>\frac{(1-\beta_{k,\ell})(1+\eta)}{1-\epsilon}$ 
and $0<\epsilon<1$. Then we have for any $\varepsilon >0$,   
\begin{equation*}
   \mathscr{D}_{\I, \J;\omega}^{\text{off}}(K)
  =    \mathcal{M}_{1, \I, \J; \omega}(K) + O \Big( K^{\vartheta_{k,\ell}+\e} \Big( \frac{T}{T_0} \Big)^{1+C_{k,\ell}} \Big),
\end{equation*}
where $ \mathcal{M}_1$ is given in \eqref{M1swaps}. 
\end{prop}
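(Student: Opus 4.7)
The plan is to express $\mathscr{D}_{\I,\J;\omega}^{\text{off}}(K)$ as a sum of smoothed additive divisor sums, apply the hypothesis $\mathcal{AD}_{k,\ell}$, and match the resulting main term against $\mathcal{M}_{1,\I,\J;\omega}(K)$. Define
\[
f(x,y)=\frac{\varphi(x/K)\varphi(y/K)}{\sqrt{xy}}\,\widehat{\omega}\!\Big(\tfrac{1}{2\pi}\log(x/y)\Big),
\]
so that, from \eqref{DIJdoff} and \eqref{DfIJr}, $\mathscr{D}_{\I,\J;\omega}^{\text{off}}(K)=\sum_{r\ne 0}D_{f;\I,\J}(r)$, where the range $r<0$ is obtained from $r>0$ by swapping $\I\leftrightarrow \J$. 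Because $x,y\asymp K$ on the support of $f$, we have $|\log(x/y)|\gg |x-y|/K$, and the rapid decay \eqref{whatbound} forces the contribution to be negligible unless $|x-y|\ll KT_0^{-1}T^{\varepsilon}$; so setting $H:=CKT_0^{-1}T^{\varepsilon}$ for a suitable absolute constant $C$ truncates the outer sum to $1\le |r|\le H$ at the cost of an error $O(T^{-B})$.

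Next I would verify that $f$ (after a smooth dyadic partition of unity if needed) satisfies \eqref{fsupport} with $X\asymp Y\asymp K$ and \eqref{fcond} with $P\asymp T/T_0$: differentiating $\widehat{\omega}(\log(x/y)/(2\pi))$ and using the bound $\widehat{\omega}^{(j)}(u)\ll T^{j+1}$ together with its effective localization to $|u|\ll T_0^{-1+\varepsilon}$ shows that each derivative effectively costs a factor of $T_0^{-1}$; combined with \eqref{phibound} for $\varphi$, this yields the stated bound. The range restriction $H\ll K^{\beta_{k,\ell}}$ becomes, using $K=T^{1+\eta}$ and $T_0\ge T^b$, precisely the assumption $b>(1-\beta_{k,\ell})(1+\eta)/(1-\varepsilon)$. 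Invoking $\mathcal{AD}_{k,\ell}(\vartheta_{k,\ell},C_{k,\ell},\beta_{k,\ell})$ for each $D_{f;\I,\J}(r)$ with $1\le |r|\le H$ therefore produces the listed main terms plus a total error bounded by
\[
HP^{C_{k,\ell}}K^{\vartheta_{k,\ell}+\varepsilon}\ll K^{\vartheta_{k,\ell}+\varepsilon}(T/T_0)^{1+C_{k,\ell}},
\]
which is precisely the error asserted in the proposition.

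For the matching step, observe first that the zeta prefactors $\prod_{j_1\ne i_1}\zeta(1-a_{i_1}+a_{j_1})\prod_{j_2\ne i_2}\zeta(1-b_{i_2}+b_{j_2})$ coincide with $\mathcal{Z}(\I\setminus\{a_{i_1}\},\{-a_{i_1}\})\,\mathcal{Z}(\{-b_{i_2}\},\J\setminus\{b_{i_2}\})$ via \eqref{ZIJid}. Writing $\widehat{\omega}(\log(x/y)/(2\pi))=\int\omega(t)(y/x)^{it}\,dt$ and applying Mellin inversion to each factor $\varphi(\cdot/K)$ turns the integral $\int f(x,x-r)x^{-a_{i_1}}(x-r)^{-b_{i_2}}\,dx$ into a $t$-integral, producing the factor $(t/2\pi)^{-a_{i_1}-b_{i_2}}$ visible in \eqref{M1swaps}, together with a double Mellin integral in $s_1,s_2$ that, after the change of variables $s=s_1+s_2$, collapses to a single $\Phi_2$-integral through the convolution formula \eqref{phi2mtimt}. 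The remaining $r$-sum against $\sum_q c_q(r)G_{\I}(1-a_{i_1},q)G_{\J}(1-b_{i_2},q)/q^{2-a_{i_1}-b_{i_2}}$ must be shown, after rearrangement and use of the simplified formulae for $G_{\I}$ and $G_{\J}$ afforded by Lemmas \ref{Gmult}, \ref{comb}, and \ref{GIk}, to yield the product of $\mathcal{A}((\I\setminus\{a_{i_1}\})\cup\{-b_{i_2}-s\},((\J\setminus\{b_{i_2}\})+s)\cup\{-a_{i_1}\})$ and $\zeta(1-a_{i_1}-b_{i_2}-s)$ appearing in \eqref{M1swaps}. This local Euler-product identification, carried out prime by prime and uniformly in the shift parameters so as to justify the resulting contour manipulations, is the principal technical obstacle of the proof.
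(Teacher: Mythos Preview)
Your outline follows the same route as the paper: reduce $\mathscr{D}_{\I,\J;\omega}^{\text{off}}(K)$ to a sum of smoothed additive divisor sums, truncate $r$ by the decay of $\widehat{\omega}$, apply $\mathcal{AD}_{k,\ell}$ on dyadic boxes, and then identify the resulting main term with $\mathcal{M}_{1,\I,\J;\omega}(K)$. The error-term calculation and the verification $H\ll K^{\beta_{k,\ell}}$ are correct. Two points need sharpening. First, the dyadic partition is not optional: as written, your $f$ has support in $[0,(1+\varrho)K]^2$, which does not meet \eqref{fsupport}; the paper partitions into boxes $[M,2M]\times[N,2N]$, applies the conjecture on each, and then removes the partition (Proposition~\ref{secondstepoff}). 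Without this step one cannot invoke $\mathcal{AD}_{k,\ell}$ as stated.

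Second, your account of the matching step misplaces where the factors $(t/2\pi)^{-a_{i_1}-b_{i_2}-s}$ and $\zeta(1-a_{i_1}-b_{i_2}-s)$ come from. They do not emerge directly from Mellin inversion or from the $r$-sum alone. In the paper one writes $\widehat{\omega}$ as a $t$-integral, applies Mellin inversion in $s_1,s_2$, and evaluates the remaining $x$-integral as a Beta integral \eqref{eqn:beta}, yielding a ratio of $\Gamma$-functions together with $\Gamma(a_{i_1}+b_{i_2}+s_1+s_2)$. Stirling's formula (Lemma~\ref{Stirling}) converts the ratio to $t^{-a_{i_1}-b_{i_2}-s}\cos(\tfrac{\pi}{2}(a_{i_1}+b_{i_2}+s))$ up to admissible error. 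Separately, the $r$-sum is the Dirichlet series $H_{\I,\J;\{a_{i_1}\},\{b_{i_2}\}}(s)$ of \eqref{H}, whose factorization (Proposition~\ref{Hid}) extracts $\zeta(a_{i_1}+b_{i_2}+s)$ and the remaining $\zeta(1+a_{k_1}+b_{k_2}+s)$ factors. The functional equation then combines $\zeta(a_{i_1}+b_{i_2}+s)$, the $\Gamma$-factor, and the cosine into $(2\pi)^{a_{i_1}+b_{i_2}+s}\zeta(1-a_{i_1}-b_{i_2}-s)$, producing exactly $(2\pi/t)^{a_{i_1}+b_{i_2}+s}$ and the $\zeta(1-\cdots)$ in \eqref{M1swaps}; the leftover Euler product is identified with $\mathcal{A}$ via Proposition~\ref{CAprop}. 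This chain --- Beta integral, Stirling, functional equation, Euler-product identity --- is the content of what you called the principal technical obstacle, and it should be made explicit.
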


This result builds on previous work of Hughes-Young \cite{HY} on the twisted fourth moment of the Riemann zeta function
and work of the second author \cite{Ng} on the sixth moment of the Riemann zeta function.  Those results are directly related to the 
cases $(k,\ell) \in \{ (2,2), (3,3) \}$ of this theorem.

Combining \eqref{decomposition}, \eqref{diagonal}, and Proposition \ref{offdiagonal} yields Theorem \ref{mainthm}. The proof of Proposition \ref{offdiagonal} will be given in Sections \ref{sec:off-diag} and \ref{completeproof}.

\section{Off-Diagonal Terms}\label{sec:off-diag}
In this section we evaluate the off-diagonal terms $ \mathscr{D}_{\I, \J;\omega}^{\text{off}}(K)$.
In order to evaluate this we must impose some initial size conditions on $\I$ and $\J$.  We require that the coefficients satisfy
\eqref{sizerestriction-}. 
These conditions are imposed since some error terms involve factors of the form
$ \zeta(1-a_{i_1}+a_{i_2})$ and $\zeta(1-b_{j_1}+b_{j_2})$ which are unbounded unless
\eqref{sizerestriction-} is imposed.  These conditions can be removed via use of an argument
with Cauchy's integral formula (see the argument in \cite[pp.20-21]{Ng}). 
Recall that 
\begin{equation}\label{eqn:off-diagonal}
\mathscr{D}_{\I,\J;\omega}^{\text{off}}(K)
= \sum_{m \ne n}  \frac{\sigma_{\I}(m) \sigma_{\J}(n) \varphi(\frac{m}{K}) \varphi(\frac{n}{K})}{\sqrt{mn}}
  \widehat{\omega}(\tfrac{1}{2 \pi}\log(\tfrac{m}{n})).
\end{equation}
\subsection{Smooth partition of unity.}
First, we apply a dyadic partition of unity to the sums over $m$ and $n$. 
To do this, we consider a smooth non-negative function $W_{0}$ on $(0,\infty)$ whose support lies in $[1,2]$, and which satisfies $\sum_{k \in \mathbb{Z}} W_0 ( x/2^{\frac{k}{2}} ) =1$, for all $x >0$. This implies that 
\begin{equation}
  \label{W0dyadic}
   \sum_{ \substack{ M= 2^{\frac{k}{2}} \\ k \ge -1} } W_0 \Big( \frac{x}{M} \Big) =1 \text{ for } x \ge 1. 
\end{equation}
An example of such a function is given in \cite[Section 5]{H}. 
Given two integers $m,n\geq1$, we have 
\begin{equation}\label{eqn:partition-of-unity}
\sum_{M}W_{0}\left(\frac{m}{M}\right)\sum_{N}W_{0}\left(\frac{n}{N}\right)=1,
\end{equation}
where $M,N \in \{ 2^{\frac{k}{2}} \ | \ k \in  \mathbb{Z} \text{ and } k \ge -1 \}$.  We shall often use the fact that $ 
\# \{ M \ | \ M \le X \} \ll \log X$. 
Upon inserting the identity (\ref{eqn:partition-of-unity}) in (\ref{eqn:off-diagonal}), we get
\begin{align*}
\mathscr{D}_{\I,\J;\omega}^{\text{off}}(K)&= \sum_{m \ne n} \sum_{M,N} \frac{\sigma_{\I}(m) \sigma_{\J}(n) W_{0}\left(\frac{m}{M}\right)W_{0}\left(\frac{n}{N}\right)\varphi(\frac{m}{K}) \varphi(\frac{n}{K})}{\sqrt{mn}}
  \widehat{\omega}(\tfrac{1}{2 \pi}\log(\tfrac{m}{n}))\\&= \sum_{M,N \le K'}\frac{1}{\sqrt{MN}}\sum_{\substack{m \ne n\\M\leq m\leq2M\\N\leq n\leq2N}}  \sigma_{\I}(m) \sigma_{\J}(n) W\left(\frac{m}{M}\right)W\left(\frac{n}{N}\right)
  \varphi \Big(\frac{m}{K} \Big)\varphi \Big(\frac{n}{K} \Big)
  \widehat{\omega}(\tfrac{1}{2 \pi}\log(\tfrac{m}{n})),
\end{align*}
where $K'=(1+\varrho)K$ and $W(x)=x^{-\frac12}W_{0}\left(x\right)$. By setting
\begin{equation}\label{IMN}
I_{M,N}=\sum_{\substack{m \ne n\\M\leq m\leq2M\\N\leq n\leq2N}}  \sigma_{\I}(m) \sigma_{\J}(n) W \Big(\frac{m}{M} \Big)W \Big(\frac{n}{N} \Big)\varphi \Big(\frac{m}{K} \Big)\varphi \Big(\frac{n}{K} \Big)
  \frac{\widehat{\omega}(\tfrac{1}{2 \pi}\log(\tfrac{m}{n}))}{T},
\end{equation} 
we can write
\begin{equation}
  \label{DIJoffdecomposition}
\mathscr{D}_{\I,\J;\omega}^{\text{off}}(K)
=\sum_{M,N\leq K'}\frac{T}{\sqrt{MN}}I_{M,N}.
\end{equation}
\subsection{Restricting $M$ and $N$.}

First, we observe that if  $M<\frac{N}{3}$ or $M>3N$, then the variables $m$ and $n$ satisfy $\left|\log\left(\tfrac{m}{n}\right)\right|>\log\left(\tfrac32\right)$. 
In this case, we know from \eqref{whatbound} that  for any  $A >0$, we have 
  $\widehat{\omega}(\tfrac{1}{2 \pi}\log(\tfrac{m}{n})) \ll T^{-A}$.  Hence, we see that $I_{M,N}$ can be made very small.
 Throughout the rest of this article we write  $M\asymp N$ to mean
 $\frac{N}{3} \le M \le 3N$.
 Furthermore, we may restrict the sum in \eqref{IMN} to integers $m,n$ such that 
\begin{equation}\label{condmn}
|\log(\tfrac{m}{n})|\ll T_{0}^{-1+\epsilon}.
\end{equation}

In fact, by \eqref{whatbound} we know that if 
 $|\log(\tfrac{m}{n})|\gg T_{0}^{-1+\epsilon}$, then  for any  $A >0$, we have 
  $\widehat{\omega}(\tfrac{1}{2 \pi}\log(\tfrac{m}{n})) \ll T^{-A}$. Therefore, the integers $m,n$ satisfying $|\log(\tfrac{m}{n})|\gg T_{0}^{-1+\epsilon}$
lead to an error term of the form $O(T^{O(1)-A})$ for any $A >0$. 
Without loss of generality, we may assume that $m > n$ and $m=n+r$ with $r \ge 1$. Thus, condition \eqref{condmn} 
becomes $|\log(1+\tfrac{r}{n})|\ll T_{0}^{-1+\epsilon}$, and so $|r| \ll n T_{0}^{-1+\epsilon} \ll M T_{0}^{-1+\epsilon}$
since $N \asymp M$.   
Hence, 
\[I_{M,N}=\sum_{1 \le |r| \ll M T_{0}^{-1+\epsilon}}\sum_{\substack{m,n\geq1\\m-n=r}} \sigma_{\I}(m) \sigma_{\J}(n)
W \Big(\frac{m}{M} \Big)W \Big(\frac{n}{N} \Big)\varphi \Big(\frac{m}{K} \Big)\varphi \Big(\frac{n}{K} \Big)
  \frac{\widehat{\omega}(\tfrac{1}{2 \pi}\log(\tfrac{m}{n}))}{T}+O(T^{-A}).\]
  Finally, observe that the condition on $r$ can be replaced by $0<|r|\ll (MN)^{\frac{1}{2}} T_{0}^{-1+\epsilon}$ since $M \asymp N$. 
 To summarize, we state the following proposition.

\begin{prop} 
 \label{IMNapprox}
If 
$\frac{N}{3}\leq M\leq 3N$,  then  for any $A>0$ we have
\[I_{M,N}=\sum_{0<|r|\ll\frac{\sqrt{MN}}{T_{0}^{1-\epsilon}}}\sum_{\substack{m,n\geq1\\m-n=r}} \sigma_{\I}(m) \sigma_{\J}(n) f^{*}(m,n)+O(T^{-A}),\] where  \begin{equation}
\label{fdefn}
f^{*}(m,n)=W\left(\frac{m}{M}\right)W\left(\frac{n}{N}\right)\varphi\left(\frac{m}{K}\right)\varphi\left(\frac{n}{K}\right)
  \frac{\widehat{\omega}(\tfrac{1}{2 \pi}\log(\tfrac{m}{n}))}{T}.\end{equation} If $M<\frac{N}{3}$ or $M>3N$, then for any $B>0$ we have $I_{M,N}=O(T^{-B})$ .
\end{prop}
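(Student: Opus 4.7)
The plan is to derive both parts of the proposition from the rapid decay of $\widehat{\omega}(u)$ recorded in \eqref{whatbound}: for any fixed $A>0$, $|\widehat{\omega}(u)| \ll T^{-A}$ whenever $|u| \gg T_{0}^{-1+\epsilon}$. All other factors in the summand of \eqref{IMN} will be controlled by entirely trivial estimates: the support conditions imposed by $W$ restrict $m \in [M,2M]$ and $n \in [N,2N]$; the divisor bound gives $\sigma_{\I}(m), \sigma_{\J}(n) \ll (mn)^{\epsilon}$; and $W, \varphi$ are bounded.

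For the case $M \not\asymp N$, I note that if $M < N/3$ then $m/n \le 2M/N < 2/3$, while if $M > 3N$ then $m/n \ge M/(2N) > 3/2$. Either way, $|\tfrac{1}{2\pi}\log(m/n)|$ exceeds an absolute positive constant, so \eqref{whatbound} gives $\widehat{\omega}(\tfrac{1}{2\pi}\log(m/n)) \ll T^{-A}$ for any $A$. Combining this with the trivial bound $(MN)^{1+\epsilon}/T$ on the remaining sum, together with the size constraints $M, N \ll K' \ll T^{1+\eta+\epsilon}$, yields $I_{M,N} \ll T^{-B}$ for any fixed $B$, which is the second assertion.

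For the case $M \asymp N$, I need to show that pairs with $|r| := |m-n| \gg \sqrt{MN}\, T_{0}^{-1+\epsilon}$ contribute only $O(T^{-A})$. Because $M \asymp N$, this condition is equivalent to $|r| \gg M T_{0}^{-1+\epsilon}$, and for such pairs the estimate $n \asymp M$ forces $|r|/n \gg T_{0}^{-1+\epsilon}$; hence $|\log(m/n)| = |\log(1 + r/n)| \gg T_{0}^{-1+\epsilon}$, using $\log(1+x) \gg \min(x,1)$ for $x>0$ (and analogously for negative $r$). A final application of \eqref{whatbound}, combined with the same trivial bound over the $O(M^2)$ pairs in the support, produces an overall $O(T^{-A})$ contribution, which is the desired truncation.

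I do not anticipate any substantive obstacle: the statement is a standard localization step whose content is built into the hypotheses \eqref{cond1}--\eqref{cond3} on $\omega$. The only care required is to verify that the polynomial-in-$T$ losses from the trivial bounds are dominated by the super-polynomial savings from \eqref{whatbound}, which is automatic since the exponent $A$ in \eqref{whatbound} may be taken arbitrarily large.
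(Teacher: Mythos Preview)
Your proposal is correct and follows essentially the same route as the paper: both arguments reduce to the decay bound \eqref{whatbound} for $\widehat{\omega}$, using the support of $W$ to force $|\log(m/n)| \ge \log(3/2)$ when $M \not\asymp N$, and using $|\log(1+r/n)| \gg |r|/n$ to truncate to $|r| \ll \sqrt{MN}\,T_0^{-1+\epsilon}$ when $M \asymp N$, with the polynomial losses from the divisor and support bounds absorbed by the arbitrarily large power saving in $T$.
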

Observe that if $m-n=r$, we have
\begin{equation}
  \label{f*special}
    f^{*}(m,n)=W\left(\frac{m}{M}\right)W\left(\frac{n}{N}\right)\varphi\left(\frac{m}{K}\right)\varphi\left(\frac{n}{K}\right)
  \frac{\widehat{\omega}(\tfrac{1}{2 \pi}\log(1+\tfrac{r}{n}))}{T}.
\end{equation}
\subsection{Applying the Additive Divisor Conjecture}

By combining \eqref{DIJoffdecomposition}, Proposition \ref{IMNapprox}, 
and \eqref{f*special}, we obtain
 \begin{equation}\label{eqn:apply-ADC}\mathscr{D}_{\I,\J;\omega}^{\text{off}}(K)=\sum_{\substack{M,N\leq K'\\M\asymp N}}\frac{T}{\sqrt{MN}}\sum_{0<|r|\ll\frac{\sqrt{MN}}{T_{0}^{1-\epsilon}}}D_{f_r; \I,\J}(r)+O(T^{-A}),
 \end{equation}
 where for $0 \ne r \in \mathbb{Z}$ we define
 \begin{equation}
    \label{fr}
     f_r(x,y) := f_{r,M,N}(x,y)
     =W\left(\frac{x}{M}\right)W\left(\frac{y}{N}\right)\varphi\left(\frac{x}{K}\right)\varphi\left(\frac{y}{K}\right)
  \frac{\widehat{\omega}(\tfrac{1}{2 \pi}\log(1+ \tfrac{r}{y}))}{T}
 \end{equation}
 when $x,y > 0$ (and put $f_r(x,y) = 0$ otherwise) and 
 $D_{f_r; \I,\J}(r)$ is the additive divisor sum associated to $\eqref{fr}$, $\I$, $\J$, and $r$ as defined in \eqref{DfIJr}.  Observe that in  
 \eqref{eqn:apply-ADC}, for each fixed $0 \ne |r| \ll \frac{\sqrt{MN}}{T_{0}^{1-\epsilon}}$, the inner sum contains the additive 
 divisor sum $D_{f_r; \I,\J}(r)$ associated to distinct functions $f_r$.
 
 In order to apply the additive divisor conjecture, Conjecture \ref{conj:shifted-sum}, we use the following lemma which shows that  the smooth function $f_r$ and its partial derivatives of all orders satisfy certain bounds. The proof of this  technical lemma is given in Section \ref{sec:technical-lemmas}.
\begin{lem} \label{fderivativebd}
 Let  $M \asymp N$  and $(x,y) \in [M,2M] \times [N,2N]$ with $1 \le |r| \ll \frac{\sqrt{MN}}{T_0} T^{\e}$.  
Then 
\begin{equation*}
  x^m y^n f_r^{(m,n)}(x,y) \ll P^{n},\; \text{ where }
     P = \Big( \frac{T}{T_0} \Big) T^{\e}. 
\end{equation*}
\end{lem}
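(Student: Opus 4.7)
The plan is to exploit the product structure $f_r(x,y) = \bigl[W(x/M)\varphi(x/K)\bigr] \cdot \bigl[W(y/N)\varphi(y/K) \cdot g(y)/T\bigr]$, where $g(y) := \widehat{\omega}\bigl(\tfrac{1}{2\pi}\log(1+r/y)\bigr)$. Because $x$ does not appear inside $\widehat{\omega}$, the factor $P=(T/T_0)T^\varepsilon$ will arise only from $y$-derivatives, which explains the asymmetric bound $P^n$ rather than $P^{m+n}$.

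First I would handle the ``easy'' scale-invariant pieces. For any smooth $\psi$ and scale $L$, one has $\partial_x^j \psi(x/L) = L^{-j}\psi^{(j)}(x/L)$, and on the relevant range $x/L$ is bounded, so $x^j \partial_x^j \psi(x/L) \ll_j 1$. Applying Leibniz to $W(x/M)\varphi(x/K)$ (using $M \le K'$ so that $x/K$ stays bounded on $[M,2M]$, and absorbing the fixed constant $\varrho^{-1}$ from \eqref{phibound}) gives $x^m \partial_x^m\bigl[W(x/M)\varphi(x/K)\bigr] \ll 1$. The same argument handles $y^b \partial_y^b\bigl[W(y/N)\varphi(y/K)\bigr] \ll 1$ for each $b \le n$.

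The main work is bounding $y^c (g/T)^{(c)}(y)$. Writing $u(y) = \tfrac{1}{2\pi}\bigl(\log(y+r) - \log y\bigr)$, explicit differentiation gives, for $k \ge 1$,
\begin{equation*}
u^{(k)}(y) = \frac{(-1)^{k-1}(k-1)!}{2\pi}\bigl((y+r)^{-k} - y^{-k}\bigr),
\end{equation*}
and since $|r|/y \ll T^\varepsilon/T_0 = o(1)$ on the support (using $|r| \ll \sqrt{MN}/T_0 \cdot T^\varepsilon$ and $M\asymp N \asymp y$), a Taylor expansion yields $y^k u^{(k)}(y) \ll_k |r|/y$. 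For $\widehat{\omega}$ itself, the trivial bound $|\widehat{\omega}^{(j)}(v)| \le \int |2\pi t|^j |\omega(t)|\,dt \ll T^{j+1}$ holds uniformly in $v$. Apply Fa\`a di Bruno to $g = \widehat{\omega}\circ u$:
\begin{equation*}
g^{(c)}(y) = \sum_{\pi \vdash \{1,\ldots,c\}} \widehat{\omega}^{(|\pi|)}(u(y)) \prod_{B\in\pi} u^{(|B|)}(y),
\end{equation*}
and multiply through by $y^c/T$. Since $\sum_{B\in\pi}|B| = c$, the product of $u$-derivatives contributes $\prod_B y^{-|B|}(|r|/y) \cdot y^c$-stuff that collapses to $(|r|/y)^{|\pi|}$, while the $\widehat{\omega}^{(|\pi|)}/T$ factor is $\ll T^{|\pi|}$. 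Using $T\cdot(|r|/y) \ll (T/T_0)T^\varepsilon = P$ and bounding the number of partitions by $c! \le (const)^c$:
\begin{equation*}
\frac{y^c}{T}\bigl|g^{(c)}(y)\bigr| \ll \sum_\pi P^{|\pi|} \ll P^c,
\end{equation*}
since $P \ge 1$.

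Finally, combining via Leibniz on the product structure,
\begin{equation*}
x^m y^n f_r^{(m,n)}(x,y) = \bigl(x^m \partial_x^m[\text{$x$-part}]\bigr) \cdot y^n \sum_{a+b+c=n}\binom{n}{a,b,c} \frac{W^{(a)}(y/N)}{N^a}\,\frac{\varphi^{(b)}(y/K)}{K^b}\,(g/T)^{(c)}(y),
\end{equation*}
and each term in the $y$-sum is $\ll P^c \le P^n$. The main (mild) obstacle is the Fa\`a di Bruno bookkeeping together with verifying that the size restriction on $|r|$ makes $T|r|/y$ of size $P$; the rest is scale-invariance plus the trivial bound for $\widehat{\omega}^{(j)}$. (One could sharpen $\widehat{\omega}^{(j)}(u) \ll T^{j+1}$ via integration by parts using \eqref{cond3}, but the stated exponent $P^n$ only requires this crude bound.)
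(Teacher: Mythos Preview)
Your proof is correct and follows the same overall architecture as the paper's: decompose $f_r$ as a product, bound the $W$- and $\varphi$-factors by scale invariance, isolate the crucial $y$-dependent factor $a(y)=\widehat{\omega}\bigl(\tfrac{1}{2\pi}\log(1+r/y)\bigr)/T$, establish $y^v a^{(v)}(y)\ll P^v$, and then recombine via Leibniz. The only substantive difference is in how the key bound on $a^{(v)}$ is obtained. The paper writes $a(y)=T^{-1}\int\omega(t)(1+r/y)^{-it}\,dt$, differentiates under the integral, and quotes \cite[Eq.~8.18]{Ng} for $\bigl|\tfrac{d^v}{dy^v}(1+r/y)^{-it}\bigr|\ll (P/y)^v$. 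You instead stay outside the integral, use the trivial moment bound $|\widehat{\omega}^{(j)}|\ll T^{j+1}$, and apply Fa\`a di Bruno to $\widehat{\omega}\circ u$ together with the elementary estimate $y^k u^{(k)}(y)\ll_k |r|/y$; the combination $T\cdot(|r|/y)\ll P$ then gives $P^{|\pi|}\le P^c$. Your route is slightly more self-contained (no external citation needed), while the paper's route makes the $t$-dependence explicit. One cosmetic point: the number of set partitions of $\{1,\dots,c\}$ is the Bell number $B_c$, not $c!$, but since you only need an $O_c(1)$ bound this does not affect the argument.
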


We must also ensure that the length of the sum over $r$ in \eqref{eqn:apply-ADC}  is within the range $H\ll M^{\beta_{k,\ell}}$ as required by Conjecture \ref{conj:shifted-sum}. We have 
\[\frac{\sqrt{MN}}{T_{0}^{1-\epsilon}}\ll MT^{-b(1-\epsilon)}\leq M^{1-b\frac{1-\epsilon}{1+\eta}}.\] If we assume that $b>\frac{(1-\beta_{k,\ell})(1+\eta)}{1-\epsilon}$ for $0<\epsilon<1$, we get that $\frac{\sqrt{MN}}{T_{0}^{1-\epsilon}}\ll M^{\beta_{k,\ell}}$ as desired. We are now ready to apply the additive divisor conjecture  to compute the main terms in $\mathscr{D}_{\I,\J;\omega}^{\text{off}}(K)$. To simplify notation, 
it is convenient to 
 set 
 \begin{equation} 
   \label{ci1i2} 
 c_{i_{1},i_{2}}  = \prod_{j_1 \in \K \setminus \{ i_1 \}}  \zeta(1-a_{i_1}+a_{j_1})   
      \prod_{j_2 \in  \Ll \setminus \{ i_2 \} }
    \zeta(1-b_{i_2}+b_{j_2}), 
\end{equation} 
where $\K$ and $\Ll$ are as  in \eqref{KL}.
We also set
 \begin{equation}
  \label{Ndefn}
 N_{f;i_1,i_2}(r)=c_{i_{1},i_{2}}\sum_{q=1}^{\infty} \frac{c_q(r)G_{\I}(1-a_{i_1},q)G_{\J}(1-b_{i_2},q)  }{q^{2-a_{i_1}-b_{i_2}}} \int_{\max(0,r)}^{\infty} f(x,x-r) x^{-a_{i_1}} (x-r)^{-b_{i_2}}\; dx, 
 \end{equation}
and
 \begin{equation}
     \label{uqri1i2}
 u_{q,r;i_1,i_2}  =  c_{q}(r)G_{\I}(1-a_{i_1},q)G_{\J}(1-b_{i_2},q)  q^{-2+a_{i_1}+b_{i_2}}.
 \end{equation}
 
It follows from the definition of $D_{f;\I,\J}(r)$ in \eqref{DfIJr}, Proposition \eqref{IMNapprox}, and an application of Conjecture~\ref{conj:shifted-sum} with $X=M$, $Y=N$, $M\asymp N$ and $P=  (\frac{T}{T_0}) T^{\e}$ that  \begin{equation}
  \label{IMNdecomposition}
    I_{M,N}=\sum_{i_{1}=1}^{k}\sum_{i_{2}=1}^{\ell}\sum_{0<|r|\ll\frac{\sqrt{MN}}{T_{0}^{1-\epsilon}}}
N_{f;i_1,i_2}(r)+\mathcal{E}_{M,N}+O\left(T^{-A}\right), \end{equation}
where
\begin{equation}
  \label{EMN}
   \mathcal{E}_{M,N}  =
    \sum_{0 < |r| \ll \frac{\sqrt{MN}}{T_0} T^{\varepsilon}}  \Delta_{f;\I,\J}(r)=O\left(\left(\frac{T}{T_0}\right)^{C_{k,\ell}}\frac{T^{\varepsilon}}{T_0}M^{\vartheta_{k,\ell}+1+\varepsilon}\right).
\end{equation} 
We now estimate the contribution of the error terms $\mathcal{E}_{M,N}$ when substituted in \eqref{DIJoffdecomposition}.
\begin{lem}  \label{EMNsum}
Let $M \asymp N$. We have 
\begin{equation*}
 \sum_{\substack{M,N \le K' \\ M \asymp N}}  \frac{T}{\sqrt{MN}}  \mathcal{E}_{M,N}  \ll T^{\varepsilon} \Big( \frac{T}{T_0} \Big)^{1+C_{k,\ell}} K^{\vartheta_{k,\ell}}.
\end{equation*}
\end{lem}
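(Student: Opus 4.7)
The plan is straightforward: substitute the bound for $\mathcal{E}_{M,N}$ from \eqref{EMN} into the sum, use the constraint $M \asymp N$ to collapse the double sum to a single sum, and then estimate the resulting geometric series over dyadic values of $M$.

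First I would apply \eqref{EMN} together with the relation $\sqrt{MN} \asymp M$ (which follows from $M \asymp N$) to obtain, for each admissible pair $(M,N)$,
\begin{equation*}
\frac{T}{\sqrt{MN}} \mathcal{E}_{M,N} \ll \frac{T}{M}\left(\frac{T}{T_0}\right)^{C_{k,\ell}}\frac{T^{\varepsilon}}{T_0} M^{\vartheta_{k,\ell}+1+\varepsilon}
= \left(\frac{T}{T_0}\right)^{1+C_{k,\ell}} T^{\varepsilon} M^{\vartheta_{k,\ell}+\varepsilon}.
\end{equation*}

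Next I would carry out the summation. Since $M$ and $N$ range over $\{2^{k/2}: k \ge -1\}$ and the constraint $M/3 \le N \le 3M$ permits only $O(1)$ values of $N$ for each $M$, the inner sum over $N$ contributes a harmless constant factor. Summing over dyadic $M \le K'$, the number of such $M$ is $O(\log K)$ and the expression $M^{\vartheta_{k,\ell}+\varepsilon}$ is monotone increasing in $M$, so the sum is dominated by the largest term, giving
\begin{equation*}
\sum_{M \le K'} M^{\vartheta_{k,\ell}+\varepsilon} \ll K^{\vartheta_{k,\ell}+\varepsilon} \log K \ll K^{\vartheta_{k,\ell}+2\varepsilon}.
\end{equation*}

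Combining these two estimates yields
\begin{equation*}
\sum_{\substack{M,N \le K' \\ M \asymp N}} \frac{T}{\sqrt{MN}} \mathcal{E}_{M,N} \ll T^{\varepsilon}\left(\frac{T}{T_0}\right)^{1+C_{k,\ell}} K^{\vartheta_{k,\ell}+\varepsilon},
\end{equation*}
and since $K = T^{1+\eta}$, the factor $K^{\varepsilon}$ can be absorbed into $T^{\varepsilon'}$ (following the convention that $\varepsilon$ may change from line to line, as noted in Section \ref{convnot}), producing the stated bound. There is no substantial obstacle here; the estimate is a direct dyadic summation, and the only thing to check is that the restriction $M \asymp N$ genuinely reduces the double sum to a single geometric series, which is immediate from the structure of the dyadic partition.
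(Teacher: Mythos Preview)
Your proof is correct and follows essentially the same approach as the paper: bound each term using \eqref{EMN} and $\sqrt{MN}\asymp M$, then sum dyadically using that there are $O(1)$ values of $N$ with $N\asymp M$ for each $M$. The paper's proof is terser but identical in substance.
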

\begin{proof}
We have
\begin{align*}
\frac{T}{M} \mathcal{E}_{M,N} &  
   \ll 
   T^{\varepsilon} \Big( \frac{T}{T_0} \Big)^{1+C_{k,\ell}} M^{\vartheta_{k,\ell}+\varepsilon}. 
 \end{align*}
It follows that 
 \begin{align*}
    \sum_{\substack{M,N \le K' \\ M \asymp N}}  \frac{T}{\sqrt{MN}}  \mathcal{E}_{M,N} 
 &   \ll  T^{\varepsilon} \Big( \frac{T}{T_0} \Big)^{1+C_{k,\ell}}  
   \sum_{M \le K'} \sum_{N \asymp M} M^{\vartheta_{k,\ell}+\varepsilon}  
      \ll K^{\vartheta_{k,\ell}+\varepsilon} \Big( \frac{T}{T_0} \Big)^{1+C_{k,\ell}}.
 \end{align*}
\end{proof}
Hence, by \eqref{DIJoffdecomposition}, \eqref{IMNdecomposition},  and Lemma \ref{EMNsum}  we have 
\begin{equation}
  \label{Doff}
\mathscr{D}_{\I,\J;\omega}^{\text{off}}(K)=\sum_{\substack{M,N\le K'\\M\asymp N}}\frac{T}{\sqrt{MN}}\sum_{i_{1}=1}^{k}\sum_{i_{2}=1}^{\ell}\sum_{0<|r|\ll\frac{\sqrt{MN}}{T_{0}^{1-\epsilon}}}
N_{f;i_1,i_2}(r)+O\left(K^{\vartheta_{k,\ell}} T^{\varepsilon} \Big( \frac{T}{T_0} \Big)^{1+C_{k,\ell}}\right).
\end{equation}
The next step is to extend the summation over $r$ to $|r| \le R_0$, where we set $R_0 = T^{A}$ for some large enough fixed $A$. 
This will be useful later when we extend the summation over $r$ to all integers.
 
Recalling the definition of $f$ in \eqref{fdefn}, the integral in \eqref{Ndefn} is
\begin{equation*}
 \mathfrak{i}= \int_{\max(0,r)}^{\infty}
 W\left(\frac{x}{M}\right)W\left(\frac{x-r}{N}\right)\varphi\left(\frac{x}{K}\right)\varphi\left(\frac{x-r}{K}\right)
  \frac{\widehat{\omega}(\tfrac{1}{2 \pi}\log(\tfrac{x}{x-r}))}{T}
  x^{-a_{i_1}} (x-r)^{-b_{i_2}}\; dx.
\end{equation*}
First, observe that the integrand is zero unless $x \in [M,2M]$ and $x \in [N+r,2N+r]$, since otherwise $W\left(\frac{x}{M}\right)W\left(\frac{x-r}{N}\right)=0$. 
Suppose without loss of generality that $r \ge 1$.  In order for these conditions to hold, the intervals $[M,2M]$ and $[N+r,2N+r]$ must intersect.   In particular, 
we must have $N+r \le 2M$ and thus $r \le 2M-N \le 2M-\frac{M}{3}=\frac{5M}{3}$ since $\frac{M}{3} \le N \le 3N$.

Notice that $x-r \ge N \ge 2^{-\frac{1}{2}}$, and so $x \ge r+2^{-\frac{1}{2}}$.  
By \eqref{omegaftbound} we have 
\begin{equation}\label{eqn:an-omega-hat-bound}
\begin{split}
  \frac{\widehat{\omega}(\tfrac{1}{2 \pi}\log(\tfrac{x}{x-r}))}{T} 
  & \ll \frac{1}{ |\log(\frac{x}{x-r})  T_0|^j} \le \frac{1}{ |\log(\frac{2M}{2M-r})  T_0|^j} \text{ since } x \in [r+2^{-\frac{1}{2}},2M]  \\
  & =   \frac{1}{ |\log(1-\frac{r}{2M})  T_0|^j}   <   \frac{1}{  |\frac{r}{2M} T_0|^j},
\end{split}
\end{equation}
where for the last inequality we used $|\log\left(1-\frac{r}{2M}\right)| > \frac{r}{2M}$ since
  $0 < \frac{r}{2M} \leq\frac{5}{6}$. If $ \frac{|r|}{M}  \gg  \frac{T^{\varepsilon}}{T_0} $, then \eqref{fdefn} and \eqref{eqn:an-omega-hat-bound} give
  \begin{equation}
\begin{split}
  \label{fstarxr}
  f(x,x-r) 
  & \ll T^{-j \varepsilon}  \ll T^{-B} \text{ for any } B>0
\end{split}
\end{equation} by choosing $j$ sufficiently large.  We note that \eqref{fstarxr} still holds for $\frac{r}{2M}>\frac{5}{6}$ since then $f(x,x-r)=0$. It follows that $ \mathfrak{i} \ll M T^{-B}$ 
and that for $|r| \le R_0$ all of these terms contribute

\begin{equation*}
 \mathcal{ET} :=  \frac{T}{\sqrt{MN}} \sum_{ \frac{\sqrt{MN}}{T_0} T^{\varepsilon} \le    |r| \le R_0 }
   \sum_{i_1=1}^{k} \sum_{i_2=1}^{\ell}  |c_{i_1,i_2}|
 \sum_{q=1}^{\infty}  |u_{q,r;i_1,i_2}|   MT^{-B} ,
\end{equation*}
where $c_{i_1,i_2}$ and $u_{q,r;i_1,i_2}$ are defined in \eqref{ci1i2} and \eqref{uqri1i2} respectively. 
Observe that by \eqref{sizerestriction-} it follows that 
\begin{equation}
   \label{ci1i2bound}
 c_{i_1,i_2} = O( (\log T)^{(k-1)(\ell-1)}) 
\end{equation}
 and 
\begin{equation}
\begin{split}
  \label{ulrbound}
  \sum_{q \ge 1} |u_{q,r;i_1,i_2}|  & \ll \sum_{q \ge 1} \gcd(q,r) q^{-2+4 \delta}
  = \sum_{g \mid r} g^{-1+4 \delta}  \sum_{\substack{ q' \ge 1 \\ \gcd(q',r/g)=1 }} (q')^{-2 +4 \delta} 
  \ll \tau_2(r),
\end{split}
\end{equation}
since $|c_{q}(r)| \le \gcd(q,r)$ and
\begin{equation}
 \label{GIJqbd}
|G_{\I}(1-a_{i_1},q)G_{\J}(1-b_{i_2},q)|  \ll
C^{\omega(q)} q^{2 \delta}
\end{equation}
for some $C>0$. This follows from the
the multiplicativity of $G_{\I}(s,n)$ and the bound \eqref{GIpabd} established in Lemma \ref{GIk} below.
Hence it follows that 
\begin{equation}
  \label{Erbound}
  \mathcal{ET} \ll   T^{1-B}\left(\log T\right)^{(k-1)(\ell-1)} \Big(\sum_{1 \le |r| \le R_0} \tau_2(r) \Big) \ll R_0 \left(\log T\right)^{(k-1)(\ell-1)+1} T^{1-B} \ll T^{-A}, 
\end{equation}
by choosing $B=1+2A+\varepsilon$. This shows that we can extend the summation over $r$ in \eqref{Doff}  to $|r|\leq R_0$ with a negligible error as desired.

We now extend the sum in \eqref{Doff} to all $M,N$.  
If  $M \not \asymp N$, then we have $N > 3M$ or $N < M/3$.  Suppose without loss of generality that $N > 3M$. 
Since $M \le x \le 2M$ and $N \le x-r \le 2N$,  it follows that $
  |\log ( \frac{x}{x-r} )| = \log  ( \frac{x-r}{x}  ) \ge \log(\frac{N}{2M}) \ge \log(\frac{3}{2})$.
Therefore,  $ \mathfrak{i}
 \ll \max(2M,2N+r)  T^{-B}$ for any $B>0$. It follows that we can add in the condition
 $M \not \asymp N$ into \eqref{Doff} with an error of size $O(T^{-A})$, by arguing as we did in the previous paragraph.  Finally, we note that the terms with $M>K'$ or $N>K'$ vanish since $f(x,x-r)=0$ then. Hence, we have shown that we can extend the summations in \eqref{Doff}  to $|r|\leq R_0$ and all $M,N\in\mathbb{N}$ with negligible error, thus proving the following proposition.
\begin{prop}. \label{firststepoff}
\begin{equation}\label{eqn:off-diag-main}\mathscr{D}_{\I,\J;\omega}^{\text{off}}(K)=\sum_{\substack{M,N}}\frac{T}{\sqrt{MN}}\sum_{i_{1}=1}^{k}\sum_{i_{2}=1}^{\ell}\sum_{1 \le |r| \le R_0}
N_{f;i_1,i_2}(r)+O\left(K^{\vartheta_{k,\ell}} T^{\varepsilon} \Big( \frac{T}{T_0} \Big)^{1+C_{k,\ell}} 
\right),
\end{equation} 
where by \eqref{fdefn} and \eqref{Ndefn} we have 
\begin{equation*}
\begin{split}
   N_{f;i_1,i_2}(r)
 & =c_{i_{1},i_{2}}\sum_{q=1}^{\infty} \frac{c_q(r)G_{\I}(1-a_{i_1},q)G_{\J}(1-b_{i_2},q)  }{q^{2-a_{i_1}-b_{i_2}}}\\
&\hspace{1em}\times \int_{\max(0,r)}^{\infty} x^{-a_{i_1}} (x-r)^{-b_{i_2}}W\left(\tfrac{x}{M}\right)W\left(\tfrac{x-r}{N}\right)\varphi\left(\tfrac{x}{K}\right)\varphi\left(\tfrac{x-r}{K}\right) \frac{\widehat{\omega}(\tfrac{1}{2 \pi}\log(\tfrac{x}{x-r}))}{T} \;dx. 
\end{split}
\end{equation*}
\end{prop}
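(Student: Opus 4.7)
The plan is to reduce $\mathscr{D}_{\I,\J;\omega}^{\text{off}}(K)$ to a double sum indexed by dyadic scales $(M,N)$ and a shift $r = m-n$, invoke the additive divisor conjecture $\mathcal{AD}_{k,\ell}$ on each piece, and then clean up the resulting sum. I would first insert a smooth dyadic partition of unity $\{W_0(\cdot/M)\}$ satisfying \eqref{W0dyadic} in both variables $m$ and $n$, absorbing the factor $(mn)^{-1/2}$ into $W(x) := x^{-1/2} W_0(x)$. Because the outer weights $\varphi(\cdot/K)$ vanish outside $[0,K']$, only scales $M,N \le K'$ survive, giving the decomposition $\mathscr{D}_{\I,\J;\omega}^{\text{off}}(K) = \sum_{M,N \le K'} (T/\sqrt{MN}) I_{M,N}$ as in \eqref{DIJoffdecomposition}.

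Next I would localize. From the rapid decay \eqref{whatbound} of $\widehat{\omega}$, if $M$ and $N$ are not comparable (say $M \le N/3$ or $M \ge 3N$) then $|\log(m/n)| \gg 1$ on the support, forcing $I_{M,N} = O(T^{-A})$. On the diagonal $M \asymp N$, the same decay bound confines the shift to $|r| \ll \sqrt{MN}/T_0^{1-\epsilon}$ up to a negligible error; this is exactly Proposition \ref{IMNapprox}, and expresses $I_{M,N}$ as a short sum of additive divisor sums $D_{f_r;\I,\J}(r)$ with $f_r$ as in \eqref{fr}.

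At this point I would apply $\mathcal{AD}_{k,\ell}$. The function $f_r$ is supported on $[M,2M]\times[N,2N]$, and Lemma \ref{fderivativebd} supplies the partial-derivative bound \eqref{fcond} with $P = (T/T_0) T^{\varepsilon}$. The hypothesis $b > (1-\beta_{k,\ell})(1+\eta)/(1-\epsilon)$ was imposed precisely so that $\sqrt{MN}/T_0^{1-\epsilon} \ll M^{\beta_{k,\ell}}$, placing the $r$-range inside the window of the conjecture. Applying the conjecture produces the main terms $\sum_{i_1,i_2} N_{f;i_1,i_2}(r)$ plus a remainder whose dyadic sum is controlled by Lemma \ref{EMNsum}, yielding a global error of $O(K^{\vartheta_{k,\ell}+\varepsilon}(T/T_0)^{1+C_{k,\ell}})$.

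It remains to extend the $r$-sum from $|r| \ll \sqrt{MN}/T_0^{1-\epsilon}$ to $|r| \le R_0 = T^A$ and to drop the restriction $M \asymp N$ in the main-term shape. For any $(M,N,r)$ outside the previous range, the integral inside $N_{f;i_1,i_2}(r)$ is $O(M T^{-B})$ for any $B>0$: one reduces to $x \asymp M$, bounds $|\log(1 - r/(2M))| > r/(2M)$, and applies \eqref{omegaftbound} with $j$ chosen large. The estimate $c_{i_1,i_2} \ll (\log T)^{(k-1)(\ell-1)}$ coming from \eqref{sizerestriction-} together with $\sum_q |u_{q,r;i_1,i_2}| \ll \tau_2(r)$, obtained from $|c_q(r)| \le \gcd(q,r)$ and the multiplicativity of $G_{\I}, G_{\J}$, then shows that the entire tail contribution is $O(T^{-A})$ for any $A>0$. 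The main technical obstacle is establishing the uniform derivative estimate in Lemma \ref{fderivativebd}, since differentiating the factor $\widehat{\omega}(\frac{1}{2\pi}\log(1+r/y))/T$ in $y$ produces potentially large powers of $r/y$ that must be absorbed by the specific decay of $\widehat{\omega}$; once that lemma is in hand, every other step is routine bookkeeping.
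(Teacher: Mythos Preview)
Your proposal is correct and follows essentially the same approach as the paper: dyadic decomposition via \eqref{W0dyadic}, localization to $M\asymp N$ and $|r|\ll \sqrt{MN}/T_0^{1-\epsilon}$ by the decay \eqref{whatbound}, application of $\mathcal{AD}_{k,\ell}$ with Lemma~\ref{fderivativebd} and Lemma~\ref{EMNsum}, and finally extension of the $r$-range to $|r|\le R_0$ and removal of the $M\asymp N$ constraint using \eqref{omegaftbound} together with the bounds \eqref{ci1i2bound}, \eqref{ulrbound}. The paper carries out exactly these steps in the same order.
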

Our goal is to evaluate $\mathscr{D}_{\I,\J;\omega}^{\text{off}}(K)$ asymptotically. More precisely, we want to prove that
\begin{equation}\label{eqn:prop3.1}
    \sum_{\substack{M,N}}\frac{T}{\sqrt{MN}}\sum_{i_{1}=1}^{k}\sum_{i_{2}=1}^{\ell}\sum_{1 \le |r| \le R_0}
N_{f;i_1,i_2}(r)
 =   \mathcal{M}_{1,\I, \J; \omega}(K)   + O(T^{8 \delta} \log T), 
\end{equation}
where $ \mathcal{M}_{1,\I, \J; \omega}(K)$ is the double integral given by \eqref{M1swaps}. 
Towards proving \eqref{eqn:prop3.1}, we shall establish the following proposition in the remaining part of this section.

\begin{prop} \label{secondstepoff}
\begin{equation*}
\begin{split}
 &    \sum_{\substack{M,N}}\frac{T}{\sqrt{MN}}\sum_{i_{1}=1}^{k}\sum_{i_{2}=1}^{\ell}\sum_{1 \le |r| \le R_0}
N_{f;i_1,i_2}(r)
 =  \sum_{i_{1}=1}^{k}\sum_{i_{2}=1}^{\ell}  \frac{c_{i_{1},i_{2}}}{(2\pi i)^2} \\
  & 
 \int_{-\infty}^{\infty}\omega(t)\int_{(1)} \int_{(1)}\Phi(s_{1}) \Phi(s_{2})K^{s_1+s_2}
   H_{\I,\J; \{ a_{i_1} \}, \{ b_{i_2} \}}(s_{1}+s_{2})\nonumber\\&\hspace{1em}\times 
 \Gamma(a_{i_1}+b_{i_2}+s_{1}+s_{2})   \Big( 
   \frac{\Gamma(\frac12-b_{i_2}-s_{2}+it)}{\Gamma(\frac12+a_{i_1}+s_{1}+it)}
   +\frac{\Gamma(\frac12-a_{i_1}-s_{1}-it)}{\Gamma(\frac12+b_{i_2}+s_{2}-it)} \Big)
  \;ds_{1}  \;ds_{2}\;dt +O(1),
\end{split}
\end{equation*}
where 
\begin{equation}\label{H}
H_{\I,\J; \{ a_{i_1} \} , \{ b_{i_2} \}}(s)=\sum_{r=1}^{\infty}\sum_{q=1}^{\infty} \frac{c_q(r)G_{\I}(1-a_{i_1},q)G_{\J}(1-b_{i_2},q)  }{q^{2-a_{i_1}-b_{i_2}}r^{a_{i_1}+b_{i_2}+s}}.
\end{equation}
\end{prop}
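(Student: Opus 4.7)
The plan is to collapse the dyadic partition back into a clean $x$-integral, apply Mellin inversion to $\varphi$, scale out the $r$-dependence to uncover $H_{\I,\J;\{a_{i_1}\},\{b_{i_2}\}}$, and evaluate the remaining integral via the classical Beta-integral identity, which is what produces the two Gamma ratios.

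First I would substitute $W(u) = u^{-1/2} W_0(u)$ into $N_{f;i_1,i_2}(r)$ with $f = f_r$ from \eqref{fr}, producing
\[
\frac{T}{\sqrt{MN}} \cdot \frac{W(x/M)\,W((x-r)/N)}{T} \;=\; \frac{W_0(x/M)\,W_0((x-r)/N)}{\sqrt{x(x-r)}}.
\]
Summing over $M, N$ and invoking \eqref{W0dyadic} collapses the two dyadic factors to $1$ on the support of $\varphi$. Then I would apply Mellin inversion \eqref{melinv} to both $\varphi(x/K)$ and $\varphi((x-r)/K)$ on lines $\Re(s_j) = c_j$ chosen so the ensuing manipulations converge absolutely; this recasts the left-hand side as a double contour integral in $(s_1, s_2)$ of $\Phi(s_1)\Phi(s_2)K^{s_1+s_2}$ against the sum over $r, q$ and the remaining $x$-integral
\[
\mathcal{J}_r(s_1,s_2) := \int_{\max(0,r)}^{\infty} x^{-\tfrac12 - a_{i_1} - s_1}(x-r)^{-\tfrac12 - b_{i_2} - s_2}\,\widehat{\omega}\!\left(\tfrac{1}{2\pi}\log\tfrac{x}{x-r}\right) dx.
\]

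Next, for $r \ge 1$ the substitution $x = ry$ yields
\[
\mathcal{J}_r(s_1,s_2) \;=\; r^{-a_{i_1} - b_{i_2} - s_1 - s_2}\int_1^\infty y^{-\tfrac12 - a_{i_1} - s_1}(y-1)^{-\tfrac12 - b_{i_2} - s_2}\,\widehat{\omega}\!\left(\tfrac{1}{2\pi}\log\tfrac{y}{y-1}\right) dy,
\]
so that the $r, q$ sum reconstructs the $r>0$ portion of $H_{\I,\J;\{a_{i_1}\},\{b_{i_2}\}}(s_1+s_2)$ by the very definition \eqref{H}. Expanding $\widehat{\omega}\!\left(\tfrac{1}{2\pi}\log\tfrac{y}{y-1}\right) = \int \omega(t)\!\left(\tfrac{y}{y-1}\right)^{-it}\!dt$ from \eqref{ft} and interchanging the $y$ and $t$ integrations reduces the inner integral to the classical Beta integral
\[
\int_1^\infty y^{-\alpha}(y-1)^{-\beta}\,dy \;=\; \frac{\Gamma(\alpha+\beta-1)\,\Gamma(1-\beta)}{\Gamma(\alpha)}
\]
(proved via the substitution $u = 1 - 1/y$), evaluated at $\alpha = \tfrac12 + a_{i_1} + s_1 + it$ and $\beta = \tfrac12 + b_{i_2} + s_2 - it$; this gives $\alpha + \beta - 1 = a_{i_1}+b_{i_2}+s_1+s_2$ and produces exactly the first Gamma ratio in the stated conclusion. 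The contribution from $r \le -1$ is handled by the substitution $x = |r|(y-1)$ together with the identity $\widehat{\omega}(-u) = \int \omega(t)\, e^{2\pi i ut}dt$, yielding the companion Gamma ratio via the same Beta integral with the roles of $\alpha$ and $\beta$ swapped.

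Finally, I would extend the truncated sum $\sum_{1 \le |r| \le R_0}$ to $\sum_{r \ne 0}$, completing $H_{\I,\J;\{a_{i_1}\},\{b_{i_2}\}}(s_1+s_2)$. On $\Re(s_1+s_2) = 2$ the tail $\sum_{|r| > R_0} r^{-a_{i_1} - b_{i_2} - s_1 - s_2}$ is $\ll R_0^{-1+\varepsilon}$, which combined with the rapid decay of $\Phi(s_j)$ from \eqref{Phibd2}, Stirling decay of the Gamma quotient in $t$, and the compact $t$-support of $\omega$ gives an overall error of $O(1)$. The principal technical obstacle lies in the contour bookkeeping: the Beta integral requires $\Re(\tfrac12 - b_{i_2} - s_2) > 0$, so one must begin with $c_2 < \tfrac12$ (and symmetrically $c_1 < \tfrac12$) together with $c_1 + c_2 > 0$, perform the Beta evaluation there, and then shift both contours to $\Re(s_j) = 1$ as in the proposition's statement. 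During this shift one must analyze the poles of $\Gamma(\tfrac12 - b_{i_2} - s_2 + it)$ at $s_2 = \tfrac12 - b_{i_2} + it$ and of $\Gamma(\tfrac12 - a_{i_1} - s_1 - it)$ at $s_1 = \tfrac12 - a_{i_1} - it$, and verify that the corresponding residues from the two branches cancel (noting that $\Gamma(a_{i_1}+b_{i_2}+s_1+s_2)$ in the numerator reduces at these poles to $\Gamma(\tfrac12 + a_{i_1} + s_1 + it)$, which kills the denominator and leaves a structurally symmetric residue); the size conditions \eqref{sizerestriction} and \eqref{sizerestriction-} are essential for keeping the $\Gamma$ factors in the numerator away from their own integer poles throughout this analysis.
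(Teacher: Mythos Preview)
Your approach is essentially the same as the paper's: collapse the dyadic partition, Mellin-invert $\varphi$, scale out $r$ via $x=ry$ (the paper uses the equivalent $x\mapsto rx+r$), apply the Beta integral to extract the Gamma ratios, then shift contours to $\Re(s_j)=1$ and extend the $r$-sum. The structure and the convergence constraints $c_j<\tfrac12$, $c_1+c_2>0$ are all correctly identified.

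The one genuine error is your claim that the residues encountered during the contour shift \emph{cancel}. They do not. When you move $s_2$ past $P_2=\tfrac12-b_{i_2}+it$, only the first Gamma ratio has a pole there; you correctly observe that $\Gamma(a_{i_1}+b_{i_2}+s_1+s_2)$ becomes $\Gamma(\tfrac12+a_{i_1}+s_1+it)$ and kills the denominator, so the residue is simply $-\Phi(P_2)(K/r)^{s_1+P_2}$. But the second Gamma ratio is regular at $P_2$ (its denominator becomes $\Gamma(1)=1$), so there is nothing to cancel against. The reason the residue is negligible is that $\Im(P_2)=t+\Im(-b_{i_2})\asymp T$, and by \eqref{Phibd2} we have $\Phi(P_2)\ll T^{-B}$ for any $B>0$. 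The same mechanism handles the pole at $s_1=\tfrac12-a_{i_1}-it$ when shifting $s_1$. So the contour shift works, but for a different reason than you state: each residue is individually tiny because $\Phi$ is rapidly decreasing in the imaginary direction, not because of any cancellation between the $r>0$ and $r<0$ branches.
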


There are quite a few steps involved in proving this proposition which we now outline:
\begin{enumerate}
\item  The sum over $M,N$ is executed and the smooth partition of unity \eqref{W0dyadic} is applied again.  This removes the functions $W_0$.
\item Each occurrence of the  function $\varphi(u)$ is replaced by the inverse Mellin transform \[\frac{1}{2 \pi i} \int_{(c)} \Phi(s) u^{-s} \, du.\] 
This creates a double integral in the variables $s_1$, $s_2$. 

\item  The sum over $r$ is extended to all integers which introduce the Dirichlet series $H_{\I,\J; \{ a_{i_1} \} , \{ b_{i_2} \}}(s)$. 
\end{enumerate}

In establishing Proposition \ref{secondstepoff} we require a number of technical results related to Stirling's formula. 

We often use the following weak form of Stirling's formula
    \begin{equation*}
  |\Gamma(\s+iu)| =\sqrt{2 \pi} |u|^{\s-1/2}e^{-\frac{1}{2} \pi  |u|} (1+O(u^{-1}))
  \text{ for } 0<\sigma\ll 1 \text{ and } |u| \ge 1, 
\end{equation*}
in addition to the trivial bound \begin{equation*}
     |\Gamma(\sigma+iu)|   \ll_{\eta_0, A} 1  \text{ for }   |u| \le 1, \sigma\in[-1+\eta_0,-\eta_0]\cup[\eta_0,A],
\end{equation*}
where $\eta_0\in (0,\frac12)$ and $A>0$ are fixed constants. 
It follows that for $\Re(s_1)=\Re(s_2)=1$, we have
   \begin{equation}\label{eqn:weak-stirling}
 \Gamma(a_{i_1}+b_{i_2}+s_{1}+s_{2})\ll\begin{cases} |\Im(s_1+s_2)|^{\frac32+2\delta}\exp\left(-\frac{\pi}{2}|\Im(s_1+s_2)|\right)&\text{if }\;|\Im(s_1+s_2)|\geq1\\
1&\text{if}\; |\Im(s_1+s_2)|\leq1. \end{cases}
   \end{equation}

We also need the following lemma, the proof of which is given in Section \ref{sec:technical-lemmas}.
\begin{lem} \label{Stirling} Suppose that $a_{i_1}$ and $b_{i_2}$ satisfy \eqref{sizerestrictiondelta}. Let $\eta_0 \in (0,\tfrac{1}{2})$ and $A>0$. Assume that  $\Re(a_{i_1}+s_1) \in [0,A]$ and $\Re(b_{i_2}+s_2) \in [0, \tfrac{1}{2}-\eta_0]\cup[\tfrac12+\eta_0,\tfrac32-\eta_0]$.  \\
(i)  Assume that $\Re(s_1+s_2+a_{i_1}+b_{i_2})\leq1$. When $|\Im(s_1)|\leq t+1$ and $|\Im(s_2)|\leq t+1$, we have
 \begin{eqnarray}
 \label{ratio-gamma-0}
 \frac{\Gamma(\frac12-b_{i_2}-s_{2}\pm it)}{\Gamma(\frac12+a_{i_1}+s_{1}\pm it)}&=&t^{-(s_{1}+s_{2}+a_{i_1}+b_{i_2})}\exp\left(\mp i\frac{\pi}{2}(s_{1}+s_{2}+a_{i_1}+b_{i_2})\right)\\&&\hspace{1em}\times\left(1+O\left(\frac{1+|s_{1}|^2+|s_{2}|^2}{t}\right)\right).\nonumber
\end{eqnarray}
(ii) When $|\Im(s_1)|\geq t+1$ or $|\Im{s_2}|\geq t+1$, we have the  bound
\begin{equation}\label{ratio-gamma-1}
\frac{\Gamma(\frac12-b_{i_2}-s_{2}+ it)}{\Gamma(\frac12+a_{i_1}+s_{1}+it)}\ll \frac{\Im(s_1)^2+\Im(s_2)^2}{t^2}e^{\frac{\pi}{2}|\Im(s_1+s_2)|}.
\end{equation}
\end{lem}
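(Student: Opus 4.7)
The plan is to apply Stirling's formula
\[
\log\Gamma(z) = (z-\tfrac12)\log z - z + \tfrac12\log(2\pi) + O(1/|z|),
\]
valid uniformly for $|\arg z|\leq\pi-\delta_0$ and $|z|$ bounded below, to each of the two Gamma factors in the ratio, and then track the dependence on $s_1,s_2$ in the resulting error term. Write $z_1=\tfrac12-b_{i_2}-s_2\pm it$ and $z_2=\tfrac12+a_{i_1}+s_1\pm it$, so that $z_2-z_1=w$ where $w:=s_1+s_2+a_{i_1}+b_{i_2}$.

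For part (i), I split into two regimes by the size of $\max(|\Im s_1|,|\Im s_2|)$ relative to $t$. When $|\Im s_j|\leq t/2$ for $j=1,2$, both $|z_1|$ and $|z_2|$ are comparable to $t$, so writing $z_j=\pm it\,(1+u_j)$ with $u_j=O((1+|s_j|)/t)$ and $|u_j|\leq\tfrac{2}{3}$, I may expand $\log z_j=\log(\pm it)+u_j-\tfrac12 u_j^2+O(|u_j|^3)$. Substituting into $(z_j-\tfrac12)\log z_j-z_j$ and subtracting the two resulting expansions, the leading contribution simplifies to $(z_1-z_2)\log(\pm it)=-w(\log t\pm i\pi/2)$ (for $t>0$); exponentiating this produces the main factor $t^{-w}\exp(\mp i\pi w/2)$. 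The quadratic terms $\tfrac12 u_j^2$, multiplied by the $O(t)$ prefactor, accumulate to an error of size $O((1+|s_1|^2+|s_2|^2)/t)$ after exponentiation. In the remaining regime $\max(|\Im s_1|,|\Im s_2|)\in(t/2,t+1]$, the prescribed error is already of order at least a positive constant, so the asymptotic collapses to a bound; the weak Stirling estimate $|\Gamma(\sigma+i\tau)|\asymp(1+|\tau|)^{\sigma-1/2}e^{-\pi|\tau|/2}$ applied to $z_1$ and $z_2$ confirms that both sides have comparable size.

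For part (ii), suppose without loss of generality $|\Im s_1|\geq t+1$ (the other case is symmetric). Then $|\Im z_2|\geq 1$, and applying the weak Stirling bound to both Gamma factors yields
\[
\left|\frac{\Gamma(z_1)}{\Gamma(z_2)}\right|\ll\frac{(1+|\Im z_1|)^{\Re z_1-1/2}}{(1+|\Im z_2|)^{\Re z_2-1/2}}\,e^{-\pi(|\Im z_1|-|\Im z_2|)/2}.
\]
The real parts are bounded in terms of $\delta$ by \eqref{sizerestrictiondelta}. Writing $|\Im z_j|\asymp t+|\Im s_j|$ in the relevant ranges and estimating $(1+|\Im z_j|)/t$ together with $|\Im z_1|-|\Im z_2|$ by elementary manipulations extracts the polynomial factor $(\Im s_1^2+\Im s_2^2)/t^2$ and the exponential $e^{\pi|\Im(s_1+s_2)|/2}$.

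The main obstacle is the careful bookkeeping in the Stirling expansion for part (i): one must check that the quadratic corrections in $u_j$ indeed combine into the advertised error $O((1+|s_1|^2+|s_2|^2)/t)$ rather than something larger, and that the phase $\mp i\pi/2$ coming from $\log(\pm it)$ is cleanly separated from the real logarithmic contributions before exponentiating, so that no spurious exponential factor from the imaginary parts of $s_1,s_2$ survives.
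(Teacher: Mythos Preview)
Your approach is essentially the same as the paper's: apply Stirling to both Gamma factors, subtract, and handle the regime where the error term exceeds a constant by falling back on the weak Stirling estimate $|\Gamma(\sigma+i\tau)|\asymp(1+|\tau|)^{\sigma-1/2}e^{-\pi|\tau|/2}$. Your split at $t/2$ rather than the paper's $\sqrt{t}$ is harmless (it merely shifts where the asymptotic hands off to the bound).

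There is, however, a concrete gap you should address. The weak Stirling estimate you invoke is only valid for $|\tau|\ge 1$; it fails near $\tau=0$ if $\sigma$ lies in a neighborhood of a non-positive integer. In your ``bound'' regime for part~(i), and again in part~(ii), the argument $\tfrac12-b_{i_2}-s_2+it$ can have imaginary part in $[-1,1]$ (precisely when $|\Im s_2|$ is within $1$ of $t$), so you cannot apply weak Stirling to the numerator there. This is exactly why the hypothesis $\Re(b_{i_2}+s_2)\in[0,\tfrac12-\eta_0]\cup[\tfrac12+\eta_0,\tfrac32-\eta_0]$ appears in the statement: it keeps $\tfrac12-\sigma_2$ away from $0$ and $-1$, so that $\Gamma(\tfrac12-\sigma_2+i\tau)$ remains $O_{\eta_0}(1)$ for $|\tau|\le 1$. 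Your sketch never uses this hypothesis, so as written the argument does not go through in the endpoint case $|\Im s_2|\in[t-1,t+1]$. The paper handles this with an explicit case analysis (splitting $|u_j|$ into the intervals $[\tfrac t2,t-1]$ and $[t-1,t+1]$) together with the trivial bounds $|\Gamma(\sigma+i\tau)|\ll_{\eta_0,A}1$ and $|\Gamma(\sigma+i\tau)|^{-1}\ll_A 1$ on the relevant compact sets.

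A second, smaller issue: in part~(ii) your claim $|\Im z_j|\asymp t+|\Im s_j|$ is not correct in general. For instance if $\Im s_1<-(t+1)$ then $|\Im z_2|=|t+\Im(a_{i_1}+s_1)|=|\Im s_1|-t+O(\delta)$, which is not comparable to $t+|\Im s_1|$ when $|\Im s_1|$ is only slightly larger than $t$. One really does need to track the signs of $u_1,u_2$ separately to verify that $-(|\Im z_1|-|\Im z_2|)\le |u_1+u_2|$ in each of the four sign combinations; the paper carries this out explicitly.
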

We remark that if  $\Re(s_1+s_2+a_{i_1}+b_{i_2})\geq1$, $\Re(a_{i_1}+s_1) \in [0,A]$ and $\Re(b_{i_2}+s_2) \in [0, \tfrac{1}{2}-\eta_0]\cup[\tfrac12+\eta_0,\tfrac32-\eta_0]$, then the proof of Lemma \ref{Stirling} (i) yields the asymptotic bound
\begin{equation}\label{eqn:weak-lemma4.6-i}
\frac{\Gamma(\frac12-b_{i_2}-s_{2}\pm it)}{\Gamma(\frac12+a_{i_1}+s_{1}\pm it)}\ll \frac{\Im(s_1)^2+\Im(s_2)^2 }{t^{2}}e^{\pm\frac{\pi}{2}\left(\Im(s_1+s_2)\right)}, \end{equation}
for $|\Im(s_1)|\leq t+1$ and $|\Im(s_2)|\leq t+1$.

Moreover, by applying \eqref{ratio-gamma-0} twice we get the asymptotic formula:
\begin{align}\label{sum-of-gamma-quotients} 
& \frac{\Gamma(\frac12-b_{i_2}-s_{2}+it)}{\Gamma(\frac12+a_{i_1}+s_{1}+it)}
   +\frac{\Gamma(\frac12-a_{i_1}-s_{1}-it)}{\Gamma(\frac12+b_{i_2}+s_{2}-it)}\nonumber\\&\hspace{2em}= 2t^{-a_{i_1}-b_{i_2}-s_1-s_2}\cos\left(\frac{\pi}{2}(a_{i_1}+b_{i_2}+s_1+s_2)\right)+O\left(e^{\frac{\pi}{2}|\Im\left(s_1+s_2\right)|}\left(\frac{1+|s_{1}|^2+|s_{2}|^2}{t^{1-2\delta+\Re(s_1+s_2)}}\right)\right)
   \end{align}
  for $|\Im(s_1)|\leq t+1$ and $|\Im(s_2)|\leq t+1$ under the assumptions $\Re(a_{i_1}+s_1), \Re(b_{i_2}+s_2) \in (0, \tfrac{1}{2}-\eta_0]\cup[\tfrac12+\eta_0,1]$, 
   and $\Re(s_1+s_2+a_{i_1}+b_{i_2})\leq1$.

\begin{proof}[Proof of Prop \ref{secondstepoff}]

Recall that 
\begin{align*}
N_{f;i_1,i_2}(r)&=c_{i_{1},i_{2}}\sum_{q=1}^{\infty}u_{q,r;i_1,i_2}   \int_{\max(0,r)}^{\infty} f(x,x-r) x^{-a_{i_1}} (x-r)^{-b_{i_2}}\; dx, 
\end{align*}
 where 
 \begin{equation*}
 u_{q,r;i_1,i_2}  =  c_{q}(r)G_{\I}(1-a_{i_1},q)G_{\J}(1-b_{i_2},q)  q^{-2+a_{i_1}+b_{i_2}}.
 \end{equation*}
 and \begin{equation*}f(x,x-r)=W\left(\frac{x}{M}\right)W\left(\frac{x-r}{N}\right)\varphi\left(\frac{x}{K}\right)\varphi\left(\frac{x-r}{K}\right)
  \frac{\widehat{\omega}(\tfrac{1}{2 \pi}\log(\tfrac{x}{x-r}))}{T}.\end{equation*}
  
Using the fact that $W(x)=x^{-\frac12}W_0(x)$, we see that $N_{f;i_1,i_2}(r)$ can be written as 
\begin{align*}
N_{f;i_1,i_2}(r)&= \frac{\sqrt{MN}}{T} c_{i_{1},i_{2}} \sum_{q=1}^{\infty}u_{q,r;i_1,i_2}\int_{\max(0,r)}^{\infty}W_0\left(\frac{x}{M}\right)W_0\left(\frac{x-r}{N}\right)x^{-a_{i_1}-\frac12} (x-r)^{-b_{i_2}-\frac12}\\&\hspace{13em}\times \varphi\left(\frac{x}{K}\right)\varphi\left(\frac{x-r}{K}\right)
  \widehat{\omega}(\tfrac{1}{2 \pi}\log(\tfrac{x}{x-r})) \; dx.
\end{align*}

Hence,
\begin{align}\label{MNsum1}
&  \sum_{\substack{M,N}}\frac{T}{\sqrt{MN}}\sum_{i_{1}=1}^{k}\sum_{i_{2}=1}^{\ell}\sum_{1 \le |r| \le R_0}
N_{f;i_1,i_2}(r) \nonumber\\
& =\sum_{i_{1}=1}^{k}\sum_{i_{2}=1}^{\ell}c_{i_{1},i_{2}} \sum_{1 \le |r| \le R_0} \sum_{q=1}^{\infty}u_{q,r;i_1,i_2}\int_{\max(0,r)}^{\infty} \left(\sum_{\substack{M,N}}W_0\left(\frac{x}{M}\right)W_0\left(\frac{x-r}{N}\right)\right)x^{-a_{i_1}-\frac12} (x-r)^{-b_{i_2}-\frac12}\nonumber\\&\hspace{17em}\times \varphi\left(\frac{x}{K}\right)\varphi\left(\frac{x-r}{K}\right)
  \widehat{\omega}(\tfrac{1}{2 \pi}\log(\tfrac{x}{x-r})) \; dx.
\end{align}
Using \eqref{W0dyadic} we observe that, 
for $0 \ne r \in \mathbb{Z}$, we have 
\[
  \sum_{M,N}  W_0 \Big( \frac{x}{M} \Big) W_0 \Big(  \frac{x-r}{N} \Big)  =
  \begin{cases}
  1 & \text{ if } x >  \max(0,r)+1, \\
    W_0 \Big( \frac{x -\max(0,r)}{2^{-\frac{1}{2}}} \Big) & \text{ if }   \max(0,r) +2^{-\frac{1}{2}}< x \le  \max(0,r)+1, \\
    0 & \text{ if }   \max(0,r) \le  x \le  \max(0,r)+2^{-\frac{1}{2}}.
  \end{cases}
\]
Substituting this expression, we see that we may replace $\sum_{M,N}  W_0 ( \frac{x}{M} ) W_0 (  \frac{x-r}{N} )$
by $1$ with an error  of size $O(1)$.   This is very similar to an analogous calculation in \cite[pp.28-29]{Ng}.
Therefore, 
\begin{align}\label{MNsum1-MNremoved}
&  \sum_{\substack{M,N}}\frac{T}{\sqrt{MN}}\sum_{i_{1}=1}^{k}\sum_{i_{2}=1}^{\ell}\sum_{1 \le |r| \le R_0}
N_{f;i_1,i_2}(r) \nonumber\\
& =\sum_{i_{1}=1}^{k}\sum_{i_{2}=1}^{\ell}c_{i_{1},i_{2}} \sum_{1 \le |r| \le R_0} \sum_{q=1}^{\infty}c_{q}(r)G_{\I}(1-a_{i_1},q)G_{\J}(1-b_{i_2},q)  q^{-2+a_{i_1}+b_{i_2}}\nonumber\\&\hspace{9em}\times\int_{\max(0,r)}^{\infty}x^{-a_{i_1}-\frac12} (x-r)^{-b_{i_2}-\frac12}\varphi\left(\frac{x}{K}\right)\varphi\left(\frac{x-r}{K}\right)
  \widehat{\omega}(\tfrac{1}{2 \pi}\log(\tfrac{x}{x-r})) \; dx +O(1).
\end{align}
Applying \eqref{ft} and \eqref{melinv} we get
\begin{equation}
\begin{split}
\label{MNsum1-MNremovedB}
&  \sum_{\substack{M,N}}\frac{T}{\sqrt{MN}}\sum_{i_{1}=1}^{k}\sum_{i_{2}=1}^{\ell}\sum_{1 \le |r| \le R_0}
N_{f;i_1,i_2}(r) \\
& = \sum_{i_{1}=1}^{k}\sum_{i_{2}=1}^{\ell} c_{i_{1},i_{2}} \sum_{1 \le |r| \le R_0}
\sum_{q=1}^{\infty} \frac{c_q(r)G_{\I}(1-a_{i_1},q)G_{\J}(1-b_{i_2},q)  }{q^{2-a_{i_1}-b_{i_2}}}\\&\hspace{1em}\times \int_{\max(0,r)}^{\infty}x^{-a_{i_1}-\frac12} (x-r)^{-b_{i_2}-\frac12} 
\\&\hspace{1em}\times \frac{1}{(2 \pi i)^2}\int_{(c_{1})} \int_{(c_{2})}\Phi(s_{1}) \Phi(s_{2}) \left(\tfrac{K}{x}\right)^{s_{1}} (\tfrac{K}{x-r})^{s_{2}}\;ds_{1}\;ds_{2} \int_{-\infty}^{\infty}\omega(t)\left(1-\tfrac{r}{x}\right)^{it}\;dt\;dx +O(1)\\&=\sum_{i_{1}=1}^{k}\sum_{i_{2}=1}^{\ell}(I_{i_{1},i_{2}}^{+}+I_{i_{1},i_{2}}^{-})+O(1),
\end{split}
\end{equation}

where
\begin{align*}
I_{i_{1},i_{2}}^{+}&= c_{i_{1},i_{2}}\sum_{r=1}^{R_0} \sum_{q=1}^{\infty} \frac{c_q(r)G_{\I}(1-a_{i_1},q)G_{\J}(1-b_{i_2},q)  }{q^{2-a_{i_1}-b_{i_2}}} \int_{r}^{\infty}x^{-a_{i_1}-\frac12} (x-r)^{-b_{i_2}-\frac12}\\&\hspace{1em}\times\frac{1}{(2 \pi i)^2} \int_{(c_{1})} \int_{(c_{2})}\Phi(s_{1}) \Phi(s_{2}) \left(\tfrac{K}{x}\right)^{s_{1}} \Big(\tfrac{K}{x-r}\Big)^{s_{2}}\;ds_{1}\;ds_{2} \int_{-\infty}^{\infty}\omega(t)\left(1-\tfrac{r}{x}\right)^{it}\;dt\;dx
\end{align*}
and 
\begin{align*}
I_{i_{1},i_{2}}^{-}&= c_{i_{1},i_{2}}\sum_{r=-R_0}^{-1} \sum_{q=1}^{\infty} \frac{c_q(r)G_{\I}(1-a_{i_1},q)G_{\J}(1-b_{i_2},q)  }{q^{2-a_{i_1}-b_{i_2}}} \int_{0}^{\infty}x^{-a_{i_1}-\frac12} (x-r)^{-b_{i_2}-\frac12} \\&\hspace{1em}\times \frac{1}{(2 \pi i)^2}\int_{(c_{1})} \int_{(c_{2})}\Phi(s_{1}) \Phi(s_{2}) \left(\tfrac{K}{x}\right)^{s_{1}}  \Big(\tfrac{K}{x-r}\Big)^{s_{2}}\;ds_{1}\;ds_{2} \int_{-\infty}^{\infty}\omega(t)\left(1-\tfrac{r}{x}\right)^{it}\;dt\;dx\\&=c_{i_{1},i_{2}}\sum_{r=1}^{R_0}\sum_{q=1}^{\infty} \frac{c_q(r)G_{\I}(1-a_{i_1},q)G_{\J}(1-b_{i_2},q)  }{q^{2-a_{i_1}-b_{i_2}}} \int_{0}^{\infty}x^{-a_{i_1}-\frac12} (x+r)^{-b_{i_2}-\frac12} \\&\hspace{1em}\times \frac{1}{(2 \pi i)^2}\int_{(c_{1})} \int_{(c_{2})}\Phi(s_{1}) \Phi(s_{2}) (\tfrac{K}{x} )^{s_{1}} (\tfrac{K}{x+r})^{s_{2}}\;ds_{1}\;ds_{2} \int_{-\infty}^{\infty}\omega(t)\left(1+\tfrac{r}{x}\right)^{it}\;dt\;dx.
\end{align*}
Note that in view of \eqref{eqn:off-diag-main} we have
\begin{equation}
  \label{DIJwKmainterm}
  \mathscr{D}_{\I,\J;\omega}^{\text{off}}(K) =
\sum_{i_{1}=1}^{k}\sum_{i_{2}=1}^{\ell}(I_{i_{1},i_{2}}^{+}+I_{i_{1},i_{2}}^{-})
+O\left(K^{\vartheta_{k,\ell}} T^{\varepsilon} \Big( \frac{T}{T_0} \Big)^{1+C_{k,\ell}} 
+ 1\right).
\end{equation}

First, we carry out computations for $I_{i_{1},i_{2}}^{+}$. The change of variable $x\mapsto rx+r$ gives
\begin{align*}
I_{i_{1},i_{2}}^{+}&= c_{i_{1},i_{2}}\sum_{r=1}^{R_0}\sum_{q=1}^{\infty} \frac{c_q(r)G_{\I}(1-a_{i_1},q)G_{\J}(1-b_{i_2},q)  }{q^{2-a_{i_1}-b_{i_2}}r^{a_{i_1}+b_{i_2}}} \int_{0}^{\infty}(1+x)^{-a_{i_1}-\frac12} x^{-b_{i_2}-\frac12} \\&\hspace{1em}\times\frac{1}{(2 \pi i)^2} \int_{(c_{1})} \int_{(c_{2})}\Phi(s_{1}) \Phi(s_{2})\frac{K^{s_{1}+s_{2}}}{r^{s_{1}+s_{2}}(1+x)^{s_{1}}x^{s_{2}}} \;ds_{1}\;ds_{2} \int_{-\infty}^{\infty}\omega(t)x^{it}(x+1)^{-it}\;dt\;dx.
\end{align*}
We rearrange the orders of integration to get 
\begin{align}\label{eqn:I+}
I_{i_{1},i_{2}}^{+}&= c_{i_{1},i_{2}}\sum_{r=1}^{R_0}\sum_{q=1}^{\infty} \frac{c_q(r)G_{\I}(1-a_{i_1},q)G_{\J}(1-b_{i_2},q)  }{q^{2-a_{i_1}-b_{i_2}}r^{a_{i_1}+b_{i_2}}(2\pi i)^2}\int_{-\infty}^{\infty}\omega(t)\int_{(c_{1})} \int_{(c_{2})}\Phi(s_{1}) \Phi(s_{2})\frac{K^{s_{1}+s_{2}}}{r^{s_{1}+s_{2}}}\nonumber\\&\hspace{1em}\times \int_{0}^{\infty}(1+x)^{-\frac12-a_{i_1}-s_{1}-it} x^{-\frac12-b_{i_2}-s_{2}+it}\;dx\;ds_{1}\;ds_{2}\;dt.
\end{align}
We now simplify the inner integral with the following useful identities.  The first
gives an integral representation for the Beta function and the second relates it to Gamma functions:
\begin{equation}\label{eqn:beta}
\int_{0}^{\infty}\frac{x^{u-1}}{(1+x)^{v}}\;dx=B(u,v-u)=\frac{\Gamma(u)\Gamma(v-u)}{\Gamma(v)},\quad\quad\quad\Re(u),\Re(v)>0.
\end{equation}
Applying \eqref{eqn:beta} with $u =  \frac12-b_{i_2}-s_{2}+it $ and $v=\frac12+a_{i_1}+s_{1}+it$
to the innermost integral in \eqref{eqn:I+},
 we obtain 
\begin{align*}
I_{i_{1},i_{2}}^{+}&= c_{i_{1},i_{2}}\sum_{r=1}^{R_0}\sum_{q=1}^{\infty} \frac{c_q(r)G_{\I}(1-a_{i_1},q)G_{\J}(1-b_{i_2},q)  }{q^{2-a_{i_1}-b_{i_2}}r^{a_{i_1}+b_{i_2}}(2\pi i)^2}\int_{-\infty}^{\infty}\omega(t)\int_{(c_{1})} \int_{(c_{2})}\Phi(s_{1}) \Phi(s_{2})\frac{K^{s_{1}+s_{2}}}{r^{s_{1}+s_{2}}}\\&\hspace{1em}\times \frac{\Gamma(\frac12-b_{i_2}-s_{2}+it)\Gamma(a_{i_1}+b_{i_2}+s_{1}+s_{2})}{\Gamma(\frac12+a_{i_1}+s_{1}+it)}\;ds_{1}\;ds_{2}\;dt.
\end{align*}
The computations for $I_{i_{1},i_{2}}^{-}$ are similar. We make the change of variable $x\mapsto rx$ and interchange the orders of integration to get
\begin{align}\label{eqn:I-}
I_{i_{1},i_{2}}^{-}&=  c_{i_{1},i_{2}}\sum_{r=1}^{R_0}\sum_{q=1}^{\infty} \frac{c_q(r)G_{\I}(1-a_{i_1},q)G_{\J}(1-b_{i_2},q)  }{q^{2-a_{i_1}-b_{i_2}}r^{a_{i_1}+b_{i_2}}(2\pi i)^2}\int_{-\infty}^{\infty}\omega(t)\int_{(c_{1})} \int_{(c_{2})}\Phi(s_{1}) \Phi(s_{2})\frac{K^{s_{1}+s_{2}}}{r^{s_{1}+s_{2}}}\nonumber\\&\hspace{1em}\times \int_{0}^{\infty}(1+x)^{-\frac12-b_{i_2}-s_{2}+it} x^{-\frac12-a_{i_1}-s_{1}-it}\;dx\;ds_{1}\;ds_{2}\;dt.
\end{align}
Applying (\ref{eqn:beta}) in equation (\ref{eqn:I-}) gives
\begin{align*}
I_{i_{1},i_{2}}^{-}&= c_{i_{1},i_{2}}\sum_{r=1}^{R_0}\sum_{q=1}^{\infty} \frac{c_q(r)G_{\I}(1-a_{i_1},q)G_{\J}(1-b_{i_2},q)  }{q^{2-a_{i_1}-b_{i_2}}r^{a_{i_1}+b_{i_2}}(2\pi i)^2}\int_{-\infty}^{\infty}\omega(t)\int_{(c_{1})} \int_{(c_{2})}\Phi(s_{1}) \Phi(s_{2})\frac{K^{s_{1}+s_{2}}}{r^{s_{1}+s_{2}}}\\&\hspace{1em}\times \frac{\Gamma(\frac12-a_{i_1}-s_{1}-it)\Gamma(a_{i_1}+b_{i_2}+s_{1}+s_{2})}{\Gamma(\frac12+b_{i_2}+s_{2}-it)}\;ds_{1}\;ds_{2}\;dt.
\end{align*}
We combine $I_{i_{1},i_{2}}^{+}$ and $I_{i_{1},i_{2}}^{-}$ to obtain 
\begin{align}\label{Iplus+Iminus1}
  &  I_{i_{1},i_{2}}^{+}+I_{i_{1},i_{2}}^{-}
   = 
   c_{i_{1},i_{2}}\sum_{r=1}^{R_0}\sum_{q=1}^{\infty} \frac{c_q(r)G_{\I}(1-a_{i_1},q)G_{\J}(1-b_{i_2},q)  }{q^{2-a_{i_1}-b_{i_2}}r^{a_{i_1}+b_{i_2}}(2\pi i)^2}\int_{-\infty}^{\infty}\omega(t)\int_{(c_{1})} \int_{(c_{2})}\Phi(s_{1}) \Phi(s_{2})\frac{K^{s_{1}+s_{2}}}{r^{s_{1}+s_{2}}}\nonumber\\&\hspace{1em}\times 
 \Gamma(a_{i_1}+b_{i_2}+s_{1}+s_{2})   \Big( 
   \frac{\Gamma(\frac12-b_{i_2}-s_{2}+it)}{\Gamma(\frac12+a_{i_1}+s_{1}+it)}
   +\frac{\Gamma(\frac12-a_{i_1}-s_{1}-it)}{\Gamma(\frac12+b_{i_2}+s_{2}-it)} \Big)
  \;ds_{1}  \;ds_{2}\;dt.
\end{align}
We move the $s_2$ contour right to $\Re(s_2)=1$ and pass
a pole at  $s_{2}=P_{2}$, where  $P_2=\frac{1}{2}-b_{i_2}+it$. Therefore,
\[
 \frac{1}{2 \pi i} \int_{(c_2)} f(s_1,s_2) ds_2 =   \frac{1}{2 \pi i} \int_{(1)} f(s_1,s_2) ds_2
 - \text{Res}_{s_2 =P_2} \Big( f(s_1,s_2) \Big),
\]
where 
\[
  f(s_1,s_2) =  \Phi(s_{2})\frac{K^{s_{1}+s_{2}}}{r^{s_{1}+s_{2}}}   
 \Gamma(a_{i_1}+b_{i_2}+s_{1}+s_{2})   \Big( 
   \frac{\Gamma(\frac12-b_{i_2}-s_{2}+it)}{\Gamma(\frac12+a_{i_1}+s_{1}+it)}
   +\frac{\Gamma(\frac12-a_{i_1}-s_{1}-it)}{\Gamma(\frac12+b_{i_2}+s_{2}-it)} \Big).
\]
Since $\Gamma(z) \sim \frac{1}{z}$ as $z\to0$, it follows that 
\[
  \text{Res}_{s_2 =P_2} \Big( f(s_1,s_2) \Big) = -\Phi(P_2)   \Big( \frac{K}{r} \Big)^{s_1+ P_2}
   = -\Phi(\tfrac{1}{2}-b_{i_2}+  it)   \Big( \frac{K}{r} \Big)^{s_1+ P_2}
   \ll  T^{-B} \Big( \frac{K}{r} \Big)^{c_1+ \frac{1}{2}+\delta},
\] for any $B>0$. 
By \eqref{ci1i2bound}, \eqref{sizerestrictiondelta}, and \eqref{GIJqbd},  the residue's contribution to \eqref{Iplus+Iminus1} is
\begin{align*}
  (\log T)^{(k-1)(\ell-1)} 
   \sum_{r=1}^{R_0}\sum_{q=1}^{\infty} \frac{|c_q(r)|r^{2 \delta} C^{\omega(q)} }{q^{2-4 \delta}} \int_{-\infty}^{\infty}\omega(t)\int_{(c_{1})} |\Phi(s_{1})|  \Big( \frac{K}{r} \Big)^{c_1+ \frac{1}{2}+\delta}  T^{-B} |ds_1| dt 
   \ll T^{-B+O(1)}. 
\end{align*}

Thus, we have
\begin{align}\label{Iplus+Iminus2}
 &  I_{i_{1},i_{2}}^{+}+I_{i_{1},i_{2}}^{-}
   = 
   c_{i_{1},i_{2}}\sum_{r=1}^{R_0}\sum_{q=1}^{\infty} \frac{c_q(r)G_{\I}(1-a_{i_1},q)G_{\J}(1-b_{i_2},q)  }{q^{2-a_{i_1}-b_{i_2}}r^{a_{i_1}+b_{i_2}}(2\pi i)^2}\int_{-\infty}^{\infty}\omega(t)\int_{(c_{1})} \int_{(1)}\Phi(s_{1}) \Phi(s_{2})\frac{K^{s_{1}+s_{2}}}{r^{s_{1}+s_{2}}}\nonumber\\&\hspace{1em}\times 
 \Gamma(a_{i_1}+b_{i_2}+s_{1}+s_{2})   \Big( 
   \frac{\Gamma(\frac12-b_{i_2}-s_{2}+it)}{\Gamma(\frac12+a_{i_1}+s_{1}+it)}
   +\frac{\Gamma(\frac12-a_{i_1}-s_{1}-it)}{\Gamma(\frac12+b_{i_2}+s_{2}-it)} \Big)
  \;ds_{1}  \;ds_{2}\;dt +O(T^{-A}),
\end{align}
for any $A >0$. Next, we move the $s_1$ contour right to $\Re(s_1)=1$ and pass a pole at $s_1=P_1$, where $P_1=\frac12-a_{i_1}-it$. We have 
\begin{align*}
  & \text{Res}_{s_1=P_1} \Phi(s_{1})\frac{K^{s_{1}+s_{2}}}{r^{s_{1}+s_{2}}}   
 \Gamma(a_{i_1}+b_{i_2}+s_{1}+s_{2})   \Big( 
   \frac{\Gamma(\frac12-b_{i_2}-s_{2}+it)}{\Gamma(\frac12+a_{i_1}+s_{1}+it)}
   +\frac{\Gamma(\frac12-a_{i_1}-s_{1}-it)}{\Gamma(\frac12+b_{i_2}+s_{2}-it)} \Big)\\&= -\Phi(P_1)   \Big( \frac{K}{r} \Big)^{s_2+ P_1}
   = -\Phi(\tfrac{1}{2}-a_{i_1}-  it)   \Big( \frac{K}{r} \Big)^{s_2+ P_1}
   \ll  T^{-B} \Big( \frac{K}{r} \Big)^{\frac{3}{2}+\delta},
\end{align*} for any $B>0$. 
By \eqref{ci1i2bound}, \eqref{sizerestrictiondelta}, and \eqref{GIJqbd},
the residue's contribution to \eqref{Iplus+Iminus2} is
\begin{align*}
  (\log T)^{(k-1)(\ell-1)} 
   \sum_{r=1}^{R_0}\sum_{q=1}^{\infty} \frac{|c_q(r)|r^{2 \delta} C^{\omega(q)}}{q^{2-4 \delta}} \int_{-\infty}^{\infty}\omega(t)\int_{(1)} |\Phi(s_{2})|  \Big( \frac{K}{r} \Big)^{ \frac{3}{2}+\delta}  T^{-B} |ds_2| dt 
   \ll T^{-B+O(1)}. 
\end{align*}
Thus, we have
\begin{align}\label{eqn:Ii1i2}
 &  I_{i_{1},i_{2}}^{+}+I_{i_{1},i_{2}}^{-}
   = 
   c_{i_{1},i_{2}}\sum_{r=1}^{R_0}\sum_{q=1}^{\infty} \frac{c_q(r)G_{\I}(1-a_{i_1},q)G_{\J}(1-b_{i_2},q)  }{q^{2-a_{i_1}-b_{i_2}}r^{a_{i_1}+b_{i_2}}(2\pi i)^2}\int_{-\infty}^{\infty}\omega(t)\int_{(1)} \int_{(1)}\Phi(s_{1}) \Phi(s_{2})\frac{K^{s_{1}+s_{2}}}{r^{s_{1}+s_{2}}}\nonumber\\&\hspace{1em}\times 
 \Gamma(a_{i_1}+b_{i_2}+s_{1}+s_{2})   \Big( 
   \frac{\Gamma(\frac12-b_{i_2}-s_{2}+it)}{\Gamma(\frac12+a_{i_1}+s_{1}+it)}
   +\frac{\Gamma(\frac12-a_{i_1}-s_{1}-it)}{\Gamma(\frac12+b_{i_2}+s_{2}-it)} \Big)
  \;ds_{1}  \;ds_{2}\;dt +O(T^{-A}),
\end{align}
for any $A >0$.

By \eqref{ratio-gamma-1} and \eqref{eqn:weak-lemma4.6-i}, we have 
 \begin{equation}\label{eqn:ratio-gamma}
  \frac{\Gamma(\frac12-b_{i_2}-s_{2}+it)}{\Gamma(\frac12+a_{i_1}+s_{1}+it)}
   +\frac{\Gamma(\frac12-a_{i_1}-s_{1}-it)}{\Gamma(\frac12+b_{i_2}+s_{2}-it)}\ll t^{-2}\left(\Im(s_1)^{2}+\Im(s_2)^{2}\right)e^{\frac{\pi}{2}|\Im(s_1+s_2)|},
   \end{equation}
 for all $s_1$, $s_2$ with $\Re(s_1)=\Re(s_2)=1$.

   Using \eqref{Phibd}, \eqref{eqn:weak-stirling} and  \eqref{eqn:ratio-gamma} we see that the absolute value of the $s_1,s_2$ integrals in \eqref{eqn:Ii1i2} is 
   \begin{align*}
   &\ll \frac{K^2}{r^{2}t^{2}}\int_{(1)} \int_{(1)}\left|\Phi(s_{1}) \Phi(s_{2})\right| 
\left|\Gamma(a_{i_1}+b_{i_2}+s_{1}+s_{2})  \right|
  \left(\Im(s_1)^{2}+\Im(s_2)^{2}\right)e^{\frac{\pi}{2}|\Im(s_1+s_2)|}
  \;\left|ds_{1}\right|  \;\left|ds_{2}\right|\\&\ll \frac{K^2}{r^{2}t^{2}}.
   \end{align*}
   Therefore, \begin{align*}
   &c_{i_{1},i_{2}}\sum_{r=R_0}^{\infty}\sum_{q=1}^{\infty} \frac{c_q(r)G_{\I}(1-a_{i_1},q)G_{\J}(1-b_{i_2},q)  }{q^{2-a_{i_1}-b_{i_2}}r^{a_{i_1}+b_{i_2}}(2\pi i)^2}\int_{-\infty}^{\infty}\omega(t)\int_{(1)} \int_{(1)}\Phi(s_{1}) \Phi(s_{2})\frac{K^{s_{1}+s_{2}}}{r^{s_{1}+s_{2}}}\nonumber\\&\hspace{1em}\times 
 \Gamma(a_{i_1}+b_{i_2}+s_{1}+s_{2})   \Big( 
   \frac{\Gamma(\frac12-b_{i_2}-s_{2}+it)}{\Gamma(\frac12+a_{i_1}+s_{1}+it)}
   +\frac{\Gamma(\frac12-a_{i_1}-s_{1}-it)}{\Gamma(\frac12+b_{i_2}+s_{2}-it)} \Big)
  \;ds_{1}  \;ds_{2}\;dt \\&\ll K^2\left(\log T\right)^{(k-1)(\ell-1)}\sum_{r=R_0}^{\infty}\tau_2(r)r^{-2+2\delta}\int_{c_1T}^{c_2T}t^{-2}\;dt
  \ll K^2T^{-1}R_0^{-1+2\delta+\epsilon}\left(\log T\right)^{(k-1)(\ell-1)} \ll 1,
   \end{align*}
   where the last bound is obtained by choosing $R_0=T^A$ for a sufficiently large value of $A$ and assuming that $\delta$ is small enough. Therefore, at the cost of increasing the error term in \eqref{eqn:Ii1i2} to $O(1)$, we may extend the range of summation for $r$  to include all positive integers. By absolute convergence, we may swap the order of summation and integration to get
\begin{align}\label{eqn:I+I-}
 &  I_{i_{1},i_{2}}^{+}+I_{i_{1},i_{2}}^{-}
   = J_{i_1,i_2} +O(1),
\end{align}
where \begin{equation}
\label{Ji1i2}
\begin{split}
J_{i_1,i_2}&=\frac{c_{i_{1},i_{2}}}{(2\pi i)^2}\int_{-\infty}^{\infty}\omega(t)\int_{(1)} \int_{(1)}\Phi(s_{1}) \Phi(s_{2})K^{s_1+s_2}
   H_{\I,\J; \{ a_{i_1} \}, \{ b_{i_2} \}}(s_{1}+s_{2})\\&\hspace{1em}\times 
 \Gamma(a_{i_1}+b_{i_2}+s_{1}+s_{2})   \Big( 
   \frac{\Gamma(\frac12-b_{i_2}-s_{2}+it)}{\Gamma(\frac12+a_{i_1}+s_{1}+it)}
   +\frac{\Gamma(\frac12-a_{i_1}-s_{1}-it)}{\Gamma(\frac12+b_{i_2}+s_{2}-it)} \Big)
  \;ds_{1}  \;ds_{2}\;dt ,
  \end{split}
    \end{equation}
and $H_{\I,\J; \{ a_{i_1} \} , \{ b_{i_2} \}}(s)$ is the Dirichlet series defined in \eqref{H}.
\end{proof}

Proposition  \ref{secondstepoff} relates the off-diagonal terms to a certain double integral of the Dirichlet series $H_{\I,\J; \{ a_{i_1} \} , \{ b_{i_2} \}}(s)$. 
By Proposition \ref{Hid} below we are able to meromorphically extend this series to $\Re(s) > - \frac{1}{2}+2 \delta$. 
  Thus a main term arising from the off-diagonals can be obtained
by moving the contours to the left.  

\section{Completing the proof of Proposition \ref{offdiagonal}} \label{completeproof}

Combining Proposition \ref{firststepoff} and Proposition \ref{secondstepoff} gives
\begin{equation}
\label{prop4.4prop4.5}
\begin{split}
    \mathscr{D}_{\I,\J;\omega}^{\text{off}}(K) = \sum_{i_{1}=1}^{k}\sum_{i_{2}=1}^{\ell}J_{i_1,i_2} 
    +O\left(K^{\vartheta_{k,\ell}} T^{\varepsilon} \Big( \frac{T}{T_0} \Big)^{1+C_{k,\ell}} +1\right),
    \end{split}
\end{equation}
where \begin{align*}J_{i_1,i_2}&= \frac{c_{i_{1},i_{2}}}{(2\pi i)^2} 
 \int_{-\infty}^{\infty}\omega(t)\int_{(1)} \int_{(1)}\Phi(s_{1}) \Phi(s_{2})K^{s_1+s_2}
   H_{\I,\J; \{ a_{i_1} \}, \{ b_{i_2} \}}(s_{1}+s_{2})\nonumber\\&\hspace{1em}\times 
 \Gamma(a_{i_1}+b_{i_2}+s_{1}+s_{2})   \Big( 
   \frac{\Gamma(\frac12-b_{i_2}-s_{2}+it)}{\Gamma(\frac12+a_{i_1}+s_{1}+it)}
   +\frac{\Gamma(\frac12-a_{i_1}-s_{1}-it)}{\Gamma(\frac12+b_{i_2}+s_{2}-it)} \Big)
  \;ds_{1}  \;ds_{2}\;dt.\end{align*}

In order to complete the proof of Proposition \ref{offdiagonal} we require the following proposition. 
\begin{prop} \label{thirdstepoff}
We have
\begin{equation*}
\begin{split}
   \sum_{i_{1}=1}^{k}\sum_{i_{2}=1}^{\ell} J_{i_1,i_2}
  & =   \mathcal{M}_{1,\I, \J; \omega}(K)   + O(T^{8 \delta} \log T). 
\end{split}
\end{equation*}
\end{prop}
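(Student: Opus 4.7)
The plan is to transform each $J_{i_1,i_2}$ via four steps -- a contour shift, an asymptotic expansion of the gamma quotients, an application of the functional equation of $\zeta$, and a Mellin convolution identity -- so that after summing over $(i_1,i_2)$ the result coincides with $\mathcal{M}_{1,\I,\J;\omega}(K)$.  First I would move both integration contours from $\Re(s_1)=\Re(s_2)=1$ to $\Re(s_1)=\Re(s_2)=\epsilon$ for a small $\epsilon$ with $2\delta<\epsilon<\tfrac{1}{2}-\eta_0$.  Any residues encountered from the poles of the gamma quotients (which lie at imaginary part $\asymp t\asymp T$) are negligible because of the rapid decay \eqref{Phibd2} of $\Phi(s_i)$; the size conditions \eqref{sizerestriction} together with the meromorphic extension of $H_{\I,\J;\{a_{i_1}\},\{b_{i_2}\}}$ provided by Proposition \ref{Hid} ensure that no other poles of $H$ or of $\Gamma(a_{i_1}+b_{i_2}+s_1+s_2)$ lie in the intervening strip.

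On the shifted contour the hypothesis $\Re(s_1+s_2+a_{i_1}+b_{i_2})\le 1$ of Lemma \ref{Stirling}(i) is satisfied, so I would apply \eqref{sum-of-gamma-quotients} to replace the sum of the two gamma quotients by the closed form $2t^{-(a_{i_1}+b_{i_2}+s_1+s_2)}\cos\bigl(\tfrac{\pi}{2}(a_{i_1}+b_{i_2}+s_1+s_2)\bigr)$ plus a controlled error.  Multiplying by the surviving $\Gamma$ factor and invoking the functional equation of $\zeta$ in the form
\[
2\,\Gamma(z)\cos(\tfrac{\pi z}{2}) = (2\pi)^{z}\,\frac{\zeta(1-z)}{\zeta(z)}
\]
with $z=a_{i_1}+b_{i_2}+s_1+s_2$ converts the main-term integrand into $(2\pi/t)^{z}\zeta(1-z)\cdot\bigl[H_{\I,\J;\{a_{i_1}\},\{b_{i_2}\}}(s_1+s_2)/\zeta(z)\bigr]\cdot\Phi(s_1)\Phi(s_2)K^{s_1+s_2}$.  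By Proposition \ref{Hid} the bracketed factor equals $\mathcal{Z}((\I\setminus\{a_{i_1}\})+s,\J\setminus\{b_{i_2}\})\,\mathcal{A}((\I\setminus\{a_{i_1}\})\cup\{-b_{i_2}-s\},((\J\setminus\{b_{i_2}\})+s)\cup\{-a_{i_1}\})$ with $s=s_1+s_2$, matching exactly the inner structure of \eqref{M1swaps}.  After the substitution $s=s_1+s_2$ the convolution identity from \eqref{phi2} collapses $\frac{1}{(2\pi i)^2}\int\int \Phi(s_1)\Phi(s_2)F(s_1+s_2)\,ds_1\,ds_2$ into $\frac{1}{2\pi i}\int \Phi_2(s)F(s)\,ds$, producing the single Mellin integral of \eqref{M1swaps}; the prefactor $c_{i_1,i_2}$ equals $\mathcal{Z}(\I\setminus\{a_{i_1}\},\{-a_{i_1}\})\mathcal{Z}(\{-b_{i_2}\},\J\setminus\{b_{i_2}\})$ directly from \eqref{ci1i2} and \eqref{ZIJid}, completing the identification of main terms.

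The principal obstacle is the error bookkeeping.  The explosive factor $e^{\pi|\Im(s_1+s_2)|/2}$ in \eqref{sum-of-gamma-quotients} cancels exactly against the exponential decay of $\Gamma(a_{i_1}+b_{i_2}+s_1+s_2)$ from Stirling; the residual polynomial growth in $|\Im(s_i)|$ is absorbed by the rapid decay \eqref{Phibd2} of $\Phi$, so the $(s_1,s_2)$-integral of the error term is absolutely convergent of size $O(K^{2\epsilon})$ times a constant.  The $t$-integration of $t^{-1+2\delta-2\epsilon}$ over the support of $\omega$ yields $O(T^{2\delta-2\epsilon}\log T)$ (the logarithm appearing when $2\epsilon$ is close to $2\delta$), giving a total error per summand of $O(T^{2\delta+2\epsilon\eta}\log T)$; choosing $\epsilon$ small enough that $2\epsilon\eta\le 6\delta$ reduces this to $O(T^{8\delta}\log T)$, and summing over the $k\ell$ pairs $(i_1,i_2)$ preserves the bound.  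A remaining technical point is to verify that the factorization of $H_{\I,\J;\{a_{i_1}\},\{b_{i_2}\}}/\zeta$ from Proposition \ref{Hid} contributes no residues in the strip $\epsilon<\Re(s)<2$ beyond those accounted for above, which rests on the precise structural information established there.
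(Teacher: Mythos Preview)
Your plan follows the paper's proof almost exactly: shift both contours to a small abscissa, apply the asymptotic \eqref{sum-of-gamma-quotients}, convert via the functional equation of $\zeta$, collapse to a single integral through $\Phi_2$, and identify the result with $\mathcal{M}_{1,\I,\J;\omega}(K)$. The error accounting you sketch is also the paper's, and your choice of $\epsilon$ slightly above $2\delta$ works (the paper takes $\varepsilon_1=2\delta$ exactly).

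There is, however, one concrete oversight. Your assertion that ``no other poles of $H$ \ldots\ lie in the intervening strip'' is false: $H_{\I,\J;\{a_{i_1}\},\{b_{i_2}\}}(s)$ has a simple pole at $s=1-a_{i_1}-b_{i_2}$ coming from the factor $\zeta(a_{i_1}+b_{i_2}+s)$ in \eqref{Hfactorization}, and this pole has real part within $2\delta$ of $1$, hence is crossed when you move $\Re(s_2)$ from $1$ down to $\epsilon$. The resolution, which the paper states explicitly, is that the sum of gamma quotients vanishes precisely at $s_1+s_2=1-a_{i_1}-b_{i_2}$ (one checks that the two quotients reduce to $1/w$ and $1/(-w)$ for a common $w$), so the product $H\cdot(\text{gamma sum})$ has a removable singularity there and no residue is picked up. Your closing remark about ``the factorization of $H/\zeta$'' does not cover this, since the pole sits in the $\zeta$ factor you have already divided out; it must be handled at the level of the full integrand before the functional equation is applied.

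A smaller point: the identification of $\mathcal{C}_{\I,\J;\{a_{i_1}\},\{b_{i_2}\}}(s)$ with the $\mathcal{A}$-factor in \eqref{M1swaps} is not part of Proposition~\ref{Hid} but is the content of Proposition~\ref{CAprop}; you should cite both. Also, \eqref{sum-of-gamma-quotients} is only valid for $|\Im(s_i)|\le t+1$, so strictly one must first truncate the $(s_1,s_2)$-integrals using Lemma~\ref{Stirling}(ii) and Lemma~\ref{lem:Hbounds}, as the paper does, before invoking it.
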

With this proposition in hand, we are able to establish Proposition  \ref{offdiagonal}.
\begin{proof}[Proof of Proposition  \ref{offdiagonal}]

From \eqref{prop4.4prop4.5} and Proposition  \ref{thirdstepoff}, we get

\[
    \mathscr{D}_{\I,\J;\omega}^{\text{off}}(K) =  \mathcal{M}_{1,\I, \J; \omega}(K)  
    +O\left(K^{\vartheta_{k,\ell}} T^{\varepsilon} \Big( \frac{T}{T_0} \Big)^{1+C_{k,\ell}}
    +T^{8\delta}\log T  \right).
\]
If we assume that $\delta$ is small enough (e.g. $8\delta<(1+\eta)\vartheta_{k,\ell}$),  we find that the second error term is smaller than the first, i.e. 
 \[T^{8\delta}\log T=O\left(K^{\vartheta_{k,\ell}} T^{\varepsilon} \Big( \frac{T}{T_0} \Big)^{1+C_{k,\ell}}\right)\] as $T\to\infty$. 
\end{proof}
The rest of this section is dedicated to establishing
Proposition \ref{thirdstepoff}.  Note that we  require the following proposition which gives a meromorphic continuation  of $H_{\I,\J; \{ a_{i_1} \}, \{ b_{i_2} \}}(s)$.
\begin{prop} \label{Hid}
Let $\delta \in (0,\frac{1}{12})$, $\e \in (0,\frac{1}{4})$,  and assume $|a_i|,|b_j| \le \delta$ for $i \in \K$, $j \in \Ll$.  \\
Fix $i_1 \in \K$ and $i_2 \in \Ll$.  
For $\Re(s) > 2 \delta$, we have
\begin{equation}
  \label{Hfactorization}
    H_{\I,\J; \{ a_{i_1} \} , \{ b_{i_2} \}}(s) =  \zeta(a_{i_1}+b_{i_2}+s) \prod_{\substack{k_1 \in \K \backslash \{ i_1 \}  
    \\ k_2 \in \Ll \backslash \{ i_2 \} }} \zeta(1+a_{k_1}+b_{k_2}+s)     \mathcal{C}_{\I,\J; \{ a_{i_1} \},\{ b_{i_2} \}}(s),
\end{equation}
where
\begin{equation}
  \label{CIJa1b1sdefn}
 \mathcal{C}_{\I,\J; \{ a_{i_1} \}, \{ b_{i_2} \}}(s) = \prod_{p} \mathcal{C}_{\I,\J; \{ a_{i_1} \}, \{ b_{i_2} \}}(p;s).
\end{equation}
For $\Re(s)=\sigma \ge -1+2\delta+\e$,  we have
\begin{equation}
  \label{CIJexpansion}
     \mathcal{C}_{\I,\J; \{ a_{i_1} \} , \{ b_{i_2} \}}(p;s) =1 - \Big(
      \sum_{ \substack{ \S \subset \I \backslash \{ a_{i_1} \}  \\ \mathcal{T} \subset \J \backslash \{ b_{i_2} \}  \\ |\mathcal{S}|=|\mathcal{T}|=2}}
  p^{-\mathcal{S}-\mathcal{T}} \Big) p^{-2-2s}     
     +O_{\varepsilon}(p^{8 \delta+\vartheta(\sigma)})
,\end{equation}
where 
\begin{equation}
   \label{thetasigma}
    \vartheta(\sigma) =
    \begin{cases}
    -2 & \text{ if } \sigma \ge 0; \\
    -2-\sigma & \text{ if } 0 > \sigma \ge -\frac{1}{2}; \\
    -3-3 \sigma & \text{ if } - \frac{1}{2} > \sigma
   , \end{cases}
\end{equation}
and we use the following notation:  if $\S \subset \I = \{ a_1, \ldots, a_k \}$ 
and $\S= \{ a_{i_1}, \ldots, a_{i_s} \}$ 
and  if $\mathcal{T} \subset \J = \{ b_1, \ldots, b_{\ell} \}$ 
and $\mathcal{T}= \{ b_{j_1}, \ldots, b_{j_t} \}$ 

then 
\[
   p^{-\S} := p^{-a_{i_1}-a_{i_2} - \cdots - a_{i_s}}
   \text{ and }
   p^{-\mathcal{T}} := p^{-b_{j_1} - b_{j_2} - \cdots -b_{j_t}}. 
\]
It follows that $\mathcal{C}_{\I,\J; \{ a_i \}, \{ b_j \}}(s)$ is absolutely convergent and holomorphic for $\Re(s) > -\frac{1}{2} + 2\delta$. Hence, $ H_{\I,\J; \{ a_i \} , \{ b_j \}}(s)$ has an analytic continuation to $\Re(s) > -\frac{1}{2}+2\delta$
with 
the exception of simple poles at 
\begin{equation}
  \label{Hpoles}
  1-(a_{i_1}+b_{i_2}) \text{ and }
  -(a_{k_1}+b_{k_2}) \text{ for } k_1 \ne i_1, k_2 \ne i_2.
\end{equation} 
\end{prop}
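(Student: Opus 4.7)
The plan is to derive the factorization by first executing the sum over $r$ using the standard identity $c_q(r) = \sum_{d\mid(q,r)} d\,\mu(q/d)$, which extracts the factor $\zeta(a_{i_1}+b_{i_2}+s)$. Writing $q=dq'$ (so that $\mu(q/d)=\mu(q')$) and $r=dr'$, the sum over $r'$ produces $\zeta(a_{i_1}+b_{i_2}+s)$, leaving a double Dirichlet series
\[
T(s) \;=\; \sum_{d,q'\ge 1} \frac{\mu(q')\,G_{\I}(1-a_{i_1},dq')\,G_{\J}(1-b_{i_2},dq')}{d^{1+s}\,(q')^{2-a_{i_1}-b_{i_2}}},
\]
initially valid for $\Re(s)$ large. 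By multiplicativity of $n\mapsto G_\I(\cdot,n)$, the series $T(s)$ factors as an Euler product $\prod_p T_p(s)$; since $\mu(q')$ kills $q'=p^2,p^3,\ldots$, a short computation gives
\[
T_p(s) \;=\; 1 + \bigl(1 - p^{s-1+a_{i_1}+b_{i_2}}\bigr)\sum_{a\ge 1}\frac{G_{\I}(1-a_{i_1},p^a)\,G_{\J}(1-b_{i_2},p^a)}{p^{a(1+s)}}.
\]

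The next step is to compare $T_p(s)$ with the Euler factor of the expected zeta product. Using Lemma \ref{GIk} to write $G_{\I}(1-a_{i_1},p^e) = \sigma_{\I\setminus\{a_{i_1}\}}(p^e) + (\text{explicit smaller term})$, and the analogous expansion for $G_{\J}(1-b_{i_2},p^e)$, the leading part of the inner sum becomes the Euler factor at $p$ of $Z_{\I\setminus\{a_{i_1}\},\J\setminus\{b_{i_2}\}}(s)$. Lemma \ref{lem:Z-sum} then identifies that factor as $\prod_{k_1\ne i_1, k_2\ne i_2} z_p(1+a_{k_1}+b_{k_2}+s) \cdot A_{\I\setminus\{a_{i_1}\},\J\setminus\{b_{i_2}\}}(s;p)$. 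Pulling the local zeta factors out globally yields
\[
H_{\I,\J;\{a_{i_1}\},\{b_{i_2}\}}(s) \;=\; \zeta(a_{i_1}+b_{i_2}+s)\!\!\prod_{\substack{k_1\in\K\setminus\{i_1\}\\k_2\in\Ll\setminus\{i_2\}}}\!\!\zeta(1+a_{k_1}+b_{k_2}+s)\cdot \mathcal{C}_{\I,\J;\{a_{i_1}\},\{b_{i_2}\}}(s),
\]
with $\mathcal{C}_{\I,\J;\{a_{i_1}\},\{b_{i_2}\}}(p;s)$ defined as the ratio of the remaining local factors.

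It remains to establish the Taylor-type expansion \eqref{CIJexpansion}. I would expand $T_p(s)$ multiplied by the reciprocals of the extracted zeta factors $\prod z_p(1+a_{k_1}+b_{k_2}+s)^{-1}=\prod(1-p^{-1-a_{k_1}-b_{k_2}-s})$ as a polynomial in negative powers of $p$. The constant term is $1$, and all single-factor contributions of shape $p^{-1-a_{k_1}-b_{k_2}-s}$ cancel against the corresponding terms produced by expanding the inner sum in $T_p(s)$ through the $\sigma$-approximations. The surviving next-order term is the advertised $\bigl(\sum_{|\mathcal{S}|=|\mathcal{T}|=2} p^{-\mathcal{S}-\mathcal{T}}\bigr)\,p^{-2-2s}$, coming from the $a=1$ contribution combined with the factor $-p^{s-1+a_{i_1}+b_{i_2}}$. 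Everything remaining is then grouped into the error term, whose size is controlled using $|a_i|,|b_j|\le\delta$ and the multiplicative bound $\sigma_A(p^e)\ll p^{e\delta}$. The resulting three-regime function $\vartheta(\sigma)$ in \eqref{thetasigma} reflects which of the expanded terms dominates: on $\sigma\ge 0$ the leftover terms are at worst $p^{-2+8\delta}$; on $-\tfrac12\le \sigma<0$ a term of size $p^{-2-\sigma+8\delta}$ appears from the interaction of $p^{s}$ with higher-order corrections; and on $\sigma<-\tfrac12$ the growth of $|p^{s}|$ produces the $p^{-3-3\sigma+8\delta}$ contribution from the $a=2$ part of the Taylor expansion. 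Meromorphic continuation to $\Re(s)>-\tfrac12+2\delta$ and the location of poles in \eqref{Hpoles} then follow from the factorization and the absolute convergence of $\prod_p \mathcal{C}_p(s)$ on that half-plane.

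The main obstacle is the bookkeeping required to verify \eqref{CIJexpansion} with precisely the error function $\vartheta(\sigma)$. One must identify exactly which combinations of (i) the $\sigma$-approximations of $G_{\I}$ and $G_{\J}$ at $p^a$ for $a=1,2$, (ii) the expansion of the Ramanujan-sum factor $1-p^{s-1+a_{i_1}+b_{i_2}}$, and (iii) the reciprocal Euler factors $\prod(1-p^{-1-a_{k_1}-b_{k_2}-s})$ survive after cancellation of all lower-order terms, and then show that the tails contribute only $O(p^{8\delta+\vartheta(\sigma)})$ in each of the three ranges of $\sigma$. Once this local estimate is in hand, summability of $\sum_p p^{8\delta+\vartheta(\sigma)}$ on $\Re(s)>-\tfrac12+2\delta+\varepsilon$ (requiring $\delta<\tfrac{1}{12}$) yields absolute convergence of $\mathcal{C}_{\I,\J;\{a_{i_1}\},\{b_{i_2}\}}(s)$ and completes the proof.
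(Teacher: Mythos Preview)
Your overall strategy is correct and essentially identical to the paper's: extract $\zeta(a_{i_1}+b_{i_2}+s)$ via the Ramanujan-sum identity, obtain the local Euler factor
\[
T_p(s) \;=\; 1 + \bigl(1 - p^{s-1+a_{i_1}+b_{i_2}}\bigr)\sum_{j\ge 1}\frac{G_{\I}(1-a_{i_1},p^j)\,G_{\J}(1-b_{i_2},p^j)}{p^{j(1+s)}},
\]
multiply by $\Pi=\prod_{k_1\ne i_1,k_2\ne i_2}(1-p^{-1-a_{k_1}-b_{k_2}-s})$ to define $\mathcal{C}_p(s)$, and analyse the result order by order using Lemma~\ref{GIk}.

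However, your identification of the source of the $p^{-2-2s}$ coefficient is wrong. You claim it comes from ``the $a=1$ contribution combined with the factor $-p^{s-1+a_{i_1}+b_{i_2}}$.'' In the paper's variables $U=p^{-1}$, $V=p^{-1-s}$, $X_1^{-1}Y_1^{-1}=p^{a_{i_1}+b_{i_2}}$, that combination is the $-A_{1,0}X_1^{-1}Y_1^{-1}U^2$ piece of $T_1f_1$, which is of order $U^2=p^{-2}$, \emph{not} $V^2=p^{-2-2s}$. The genuine $V^2$ coefficient $c_{02}$ in $\mathcal{C}_p=(1+\sum_jT_jf_j)\Pi$ has three distinct sources: the $j=2$ term $A_{2,0}V^2$ from $T_2f_2$, the quadratic coefficient $g_2V^2$ in the expansion of $\Pi$, and the cross term $-A_{1,0}^2V^2$ from $(1+A_{1,0}V+\cdots)(1-A_{1,0}V+\cdots)$. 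One must then verify the combinatorial identity
\[
c_{02} \;=\; g_2 + A_{2,0} - A_{1,0}^2 \;=\; -\!\!\sum_{\substack{I\subset\K\setminus\{i_1\},\,J\subset\Ll\setminus\{i_2\}\\|I|=|J|=2}} X_I Y_J,
\]
which requires the explicit formulae \eqref{q1j}--\eqref{q2j} from Lemma~\ref{comb} and an expansion of $g_2$; the three pieces individually contain terms like $X_I^2Y_J$, $X_IY_J^2$, $X_I^2Y_J^2$ with $|I|=|J|=1$ that cancel only in combination. This is exactly the ``bookkeeping'' you flagged as the main obstacle, and your proposed shortcut does not produce it. Once this identity is established, the tail bounds and the derivation of $\vartheta(\sigma)$ proceed as you outline.
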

The proof of Proposition \ref{Hid} is given in Section \ref{sec:Hfunction}. We note that special cases of this result were previously proven in \cite[Corollary 6.2,~p.223]{HY}
for the case $|\I|=|\J|=2$ (by setting $h=k=1$ in their article)
and in \cite[Proposition 5.1]{Ng} for the case $|\I|=|\J|=3$.  Furthermore, our proof 
simplifies the proof of Proposition 5.1
in \cite{Ng}, as we do not  use Maple. We also require the following bounds on $H_{\I,\J; \{ a_{i_1} \} , \{ b_{i_2} \}}(s)$ which follow from  \eqref{Hfactorization} and the well known bounds for $\zeta(x+iy)$ when $0\leq x<1$. 

\begin{lem}
\label{lem:Hbounds}
Let $\delta\in(0,\frac{1}{7})$ and suppose that $|a_{i_1} |, |b_{i_2} |<\delta$. When $\Re(s)=4\delta$, we have 
\begin{equation} 
  \label{HIJdeltabd}
H_{\I,\J; \{ a_{i_1} \} , \{ b_{i_2} \}}(s)\ll_{\delta} (1+|\Im(s)|)^\frac{1}{2}.
\end{equation}
When $|\Re(s)-1|\leq 4\delta$ and $\Im(s)\asymp T$, we have 
\[H_{\I,\J; \{ a_{i_1} \} , \{ b_{i_2} \}}(s)\ll T^{4\delta}.\]
\end{lem}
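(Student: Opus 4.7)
The plan is to deduce both bounds directly from the factorization
\begin{equation*}
H_{\I,\J; \{a_{i_1}\}, \{b_{i_2}\}}(s) = \zeta(a_{i_1}+b_{i_2}+s) \prod_{\substack{k_1 \in \K \setminus \{i_1\} \\ k_2 \in \Ll \setminus \{i_2\}}} \zeta(1+a_{k_1}+b_{k_2}+s) \cdot \mathcal{C}_{\I,\J; \{a_{i_1}\}, \{b_{i_2}\}}(s)
\end{equation*}
provided by Proposition \ref{Hid}, by estimating each of the three pieces on the two relevant vertical lines. A preliminary remark, common to both cases: since $\mathcal{C}_{\I,\J; \{a_{i_1}\}, \{b_{i_2}\}}(s)$ is given by an Euler product that converges absolutely for $\Re(s) > -\tfrac{1}{2}+2\delta$ (as asserted in Proposition \ref{Hid}), and since both the line $\Re(s)=4\delta$ and the strip $|\Re(s)-1|\le 4\delta$ lie comfortably inside this half-plane when $\delta \in (0,\tfrac{1}{7})$, the factor $\mathcal{C}_{\I,\J; \{a_{i_1}\}, \{b_{i_2}\}}(s)$ contributes a uniform $O_\delta(1)$.

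For the first bound, with $\Re(s)=4\delta$: each shifted $\zeta$-factor $\zeta(1+a_{k_1}+b_{k_2}+s)$ has argument with real part at least $1+4\delta-2\delta=1+2\delta$, so it lies in the region of absolute convergence of $\zeta$ and is $O_\delta(1)$. The sole remaining factor $\zeta(a_{i_1}+b_{i_2}+s)$ has argument with real part in $[2\delta, 6\delta]$, and invoking the standard convexity bound $\zeta(\sigma+it) \ll (1+|t|)^{(1-\sigma)/2}$ valid uniformly for $\sigma \in [\sigma_0, 2]$ with any fixed $\sigma_0 > 0$, one obtains $\zeta(a_{i_1}+b_{i_2}+s) \ll_\delta (1+|\Im(s)|)^{(1-2\delta)/2} \ll (1+|\Im(s)|)^{1/2}$, which gives the claim.

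For the second bound, with $|\Re(s)-1| \le 4\delta$ and $\Im(s) \asymp T$: the shifted factors $\zeta(1+a_{k_1}+b_{k_2}+s)$ now have real part in $[2-6\delta, 2+6\delta]$, again safely in the region of absolute convergence and so $O_\delta(1)$. The factor $\zeta(a_{i_1}+b_{i_2}+s)$ has argument with real part in $[1-6\delta, 1+6\delta]$. The condition $\Im(s) \asymp T$ keeps us away from the simple pole of $\zeta$ at $1$, and the convexity estimate together with the standard bound $\zeta(1+it) \ll \log(2+|t|)$ gives $\zeta(a_{i_1}+b_{i_2}+s) \ll T^{3\delta+\varepsilon}$ in the worst case $\Re(a_{i_1}+b_{i_2}+s) = 1-6\delta$; this is $\ll T^{4\delta}$ once $\varepsilon$ is chosen smaller than $\delta$. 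Multiplying the three bounds yields the stated estimate.

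No step here is a serious obstacle: the only points requiring care are verifying that $\delta \in (0,\tfrac{1}{7})$ is small enough to keep all three lines inside the half-plane of absolute convergence for $\mathcal{C}$ while still giving the arithmetic $1-6\delta > \tfrac{1}{2}$ that makes the convexity bound for $\zeta$ comfortably better than $T^{4\delta}$, and recording that the bound on $\mathcal{C}$ is uniform in $\Im(s)$ because it comes from an absolutely convergent Euler product with no dependence on the imaginary part.
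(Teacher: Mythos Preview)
Your proof is correct and follows essentially the same approach as the paper: both invoke the factorization of Proposition \ref{Hid}, bound the $\mathcal{C}$-factor and the product of shifted $\zeta$-factors by $O_\delta(1)$ (using absolute convergence), and then apply the convexity bound to the single remaining factor $\zeta(a_{i_1}+b_{i_2}+s)$ to obtain $(1+|\Im(s)|)^{(1-2\delta)/2}$ and $T^{3\delta}\log T \ll T^{4\delta}$ respectively. The only cosmetic difference is that the paper cites Ivi\'{c}'s Theorem~1.9 for the convexity bound and absorbs the log into the exponent at the end, whereas you write the convexity bound with an $\varepsilon$.
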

Note that the implied constant in \eqref{HIJdeltabd} goes to infinity as 
$\delta \to 0$.
\begin{proof}
By  \eqref{Hfactorization} we have for $s=4\delta+iu$
\begin{align*}H_{\I,\J; \{ a_{i_1} \} , \{ b_{i_2} \}}(s)&\ll \left|\zeta(a_{i_1}+b_{i_2}+4\delta+iu)\right| \prod_{\substack{k_1 \in \K \backslash \{ i_1 \}  
    \\ k_2 \in \Ll \backslash \{ i_2 \} }} \left|\zeta(1+a_{k_1}+b_{k_2}+4\delta+iu) \right|\\&\ll  \left|\zeta(a_{i_1}+b_{i_2}+4\delta+iu)\right|,\end{align*}
    since $\Re(1+a_{k_1}+b_{k_2}+4\delta+iu)>1+2\delta$. By \cite[Theorem~1.9]{ivic}, we know that 
    \[\zeta(a_{i_1}+b_{i_2}+4\delta+iu)\ll (1+|u|)^{\frac{1-2\delta}{2}}\log(2+|u|),\]
    since $0<2\delta\leq \Re(a_{i_1}+b_{i_2})+4\delta\leq 6\delta<1$ and $u-2\delta\leq\Im(a_{i_1}+b_{i_2})+u\leq u+2\delta$. Hence,
    
    \[H_{\I,\J; \{ a_{i_1} \} , \{ b_{i_2} \}}(s)\ll(1+|u|)^{\frac{1}{2}}.\]
    Let us now assume that $1-4\delta\leq \Re(s) \leq1+4\delta$, then $\Re(1+a_{i_1}+b_{i_2}+s)\geq 2-6\delta>1$. It follows that $\prod_{\substack{k_1 \in \K \backslash \{ i_1 \}  
    \\ k_2 \in \Ll \backslash \{ i_2 \} }} \left|\zeta(1+a_{k_1}+b_{k_2}+s) \right|\ll1$. If we further assume that $\Im(s)\asymp T$,  then $\zeta(a_{i_1}+b_{i_2}+s)\ll T^{\frac{1-(1-6\delta)}{2}}\log(T)$. Hence, \[H_{\I,\J; \{ a_{i_1} \} , \{ b_{i_2} \}}(s)\ll T^{3\delta}\log T\ll T^{4\delta},\] when $|\Re(s)-1|\leq 4\delta$ and $\Im(s)\asymp T$.
\end{proof}

We also require an identity relating $\mathcal{C}_{\I,\J; \{ a_{i_1} \}, \{ b_{i_2} \}}(s)$ to $\mathcal{A}(\tilde{I},\tilde{J})$
for certain sets $\tilde{I}$ and $\tilde{J}$.  
\begin{prop} \label{CAprop}
Let $\delta >0$, and let
 $\I$ and $\J$ satisfy the size restrictions \eqref{sizerestrictiondelta}. Let $C>0$, and suppose that $|\Re(s)| \le C \delta$. 
If $\delta < \frac{1}{2(C+1)}$, then we have 
\begin{equation}
  \label{CAidentity}
\mathcal{C}_{\I,\J;\{ a_{i_1} \}, \{ b_{i_2} \}}(s)= \mathcal{A}( (\I \setminus \{ a_{i_1} \}) \cup \{ -b_{i_2}-s \},   
     ((\J \setminus \{ b_{i_2} \})+s)
      \cup \{ - a_{i_1} \} ). 
\end{equation}
\end{prop}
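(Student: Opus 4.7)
The plan is to verify the identity prime-by-prime. By Proposition~\ref{Hid} and the definition \eqref{CIJa1b1sdefn}, $\mathcal{C}_{\I,\J;\{a_{i_1}\},\{b_{i_2}\}}(s)$ is an absolutely convergent Euler product $\prod_p \mathcal{C}_{\I,\J;\{a_{i_1}\},\{b_{i_2}\}}(p;s)$ in the strip $|\Re(s)|<\tfrac{1}{2}-2\delta$. On the other side, the definition \eqref{AIJ} expresses $\mathcal{A}(\tilde{\I},\tilde{\J})$ as an Euler product $\prod_p \mathcal{A}_p(\tilde{\I},\tilde{\J})$; the argument used in the proof of Lemma~\ref{lem:Z-sum} applies verbatim, once one checks that every element of $\tilde{\I}=(\I\setminus\{a_{i_1}\})\cup\{-b_{i_2}-s\}$ and $\tilde{\J}=((\J\setminus\{b_{i_2}\})+s)\cup\{-a_{i_1}\}$ has real part bounded by $(C+1)\delta<\tfrac12$, which is precisely the hypothesis $\delta<\tfrac{1}{2(C+1)}$. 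Hence both products converge absolutely in the same region, and by the identity principle it suffices to show $\mathcal{C}_{\I,\J;\{a_{i_1}\},\{b_{i_2}\}}(p;s)=\mathcal{A}_p(\tilde{\I},\tilde{\J})$ at every prime $p$.

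For the right-hand side, I would use the local generating function
\[
\sum_{u\ge 0}\sigma_{\tilde{\I}}(p^u)\,x^u=\prod_{\alpha\in\tilde{\I}}\frac{1}{1-p^{-\alpha}x},
\]
and the analogous identity for $\tilde{\J}$, to express $\mathcal{A}_p(\tilde{\I},\tilde{\J})$ as a closed-form rational function in $p^{-1}$, $p^{-s}$, and the shifts $p^{-a_i}$, $p^{-b_j}$. The distinguished elements $-b_{i_2}-s\in\tilde{\I}$ and $-a_{i_1}\in\tilde{\J}$ produce the special factors that break the otherwise symmetric role of $\I$ and $\J$, and these are what will eventually match the dissymmetric roles of $a_{i_1}$ and $b_{i_2}$ in the definition of $H$.

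For the left-hand side, I would invoke the explicit local formula for $\mathcal{C}_{\I,\J;\{a_{i_1}\},\{b_{i_2}\}}(p;s)$ that emerges from the proof of Proposition~\ref{Hid}. Applying the Ramanujan-sum identity $\sum_{r\ge1}c_q(r)r^{-w}=\sigma_{1-w}(q)/\zeta(w)$ with $w=a_{i_1}+b_{i_2}+s$ to the definition \eqref{H} and then using the multiplicativity of $G_{\I}(1-a_{i_1},\cdot)$ and $G_{\J}(1-b_{i_2},\cdot)$ yields
\[
H_{\I,\J;\{a_{i_1}\},\{b_{i_2}\}}(s)\,\zeta(a_{i_1}+b_{i_2}+s)=\prod_p\sum_{u\ge 0}\frac{G_{\I}(1-a_{i_1},p^u)\,G_{\J}(1-b_{i_2},p^u)\,\sigma_{1-a_{i_1}-b_{i_2}-s}(p^u)}{p^{u(2-a_{i_1}-b_{i_2})}}.
\]
Dividing by the remaining zeta factors from the factorization of Proposition~\ref{Hid} gives $\mathcal{C}_{\I,\J;\{a_{i_1}\},\{b_{i_2}\}}(p;s)$ in closed form.

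The main obstacle is the algebraic verification that these two closed-form local expressions coincide. Both sides are rational functions in $p^{-s}$, so the task reduces to an identity of rational functions. The verification should go through by exploiting the multiplicativity of $G_A(s,n)$ (Lemma~\ref{Gmult}) together with the explicit description of $G_{\I}(1-a_{i_1},p^u)$ provided by Lemma~\ref{GIk}, which identifies it with $\sigma_{\I\setminus\{a_{i_1}\}}(p^u)$ up to a controlled correction. This turns the left side into a generating-function expression of exactly the type appearing on the right. As a consistency check, the truncated expansion \eqref{CIJexpansion} of $\mathcal{C}_{\I,\J;\{a_{i_1}\},\{b_{i_2}\}}(p;s)$ can be compared to the analogous expansion of $\mathcal{A}_p(\tilde{\I},\tilde{\J})$ obtained by expanding its Euler factor in $p^{-1}$; both produce the identical leading coefficient $-\sum p^{-\mathcal{S}-\mathcal{T}}$ at order $p^{-2-2s}$, with the sum ranging over $\mathcal{S}\subset\I\setminus\{a_{i_1}\}$ and $\mathcal{T}\subset\J\setminus\{b_{i_2}\}$ of sizes two, giving strong evidence that the full identity holds.
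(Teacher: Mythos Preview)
The paper itself does not supply a proof of this proposition: immediately after the statement it simply says ``The proof of the identity \eqref{CAidentity} is given in \cite[Section~4]{CK3}'' and cites the special cases in \cite{HY} and \cite{Ng}. So there is no in-paper argument to compare against; your strategy of reducing to a prime-by-prime identity of rational functions is precisely the natural one and is what the cited references carry out.

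Two remarks on your sketch. First, the Ramanujan-sum identity you quote has the roles of $q$ and $r$ interchanged: the classical formula is $\sum_{q\ge 1}c_q(r)q^{-w}=\sigma_{1-w}(r)/\zeta(w)$, whereas summing over $r$ for fixed $q$ gives $\sum_{r\ge 1}c_q(r)r^{-w}=\zeta(w)\sum_{d\mid q}\mu(q/d)d^{1-w}$, which is exactly the computation the paper performs in \eqref{HIJe1}--\eqref{ellsum}. You do not need your own derivation of the Euler product for $H$: the paper already gives the local factor $\mathcal{C}_{\I,\J;\{a_{i_1}\},\{b_{i_2}\}}(p;s)$ explicitly in \eqref{CIJa1b1ps}, and you should start from that formula.

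Second, and more substantively, your proposal stops at ``the main obstacle is the algebraic verification.'' You observe that both sides are rational functions and give a consistency check at order $p^{-2-2s}$, but the actual verification is not carried out. The honest content of the proof lies exactly here: one must show that the generating series built from $G_{\I}(1-a_{i_1},p^j)G_{\J}(1-b_{i_2},p^j)$ via \eqref{CIJa1b1ps}, after unwinding Lemma~\ref{GIk}, coincides with $\prod_{\alpha\in\tilde{\I},\beta\in\tilde{\J}}(1-p^{-1-\alpha-\beta})\sum_{u\ge 0}\sigma_{\tilde{\I}}(p^u)\sigma_{\tilde{\J}}(p^u)p^{-u}$. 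This is a genuine (if mechanical) computation, and your write-up would need to execute it rather than assert that it ``should go through.''
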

The proof of the identity \eqref{CAidentity} is given in \cite[Section~4]{CK3}.  Special cases of this result were proven in \cite[Lemmas 6.10-6.12]{HY} and \cite[Proposition 6.2]{Ng}
corresponding to $|\I|=|\J|=2$  and $|\I|=|\J|=3$ respectively.

\begin{proof}[Proof of Proposition \ref{thirdstepoff}]
By Proposition \ref{Hid},   $H_{\I,\J; \{ a_{i_1} \}, \{ b_{i_2} \}}(s)$ has a meromorphic continuation to 
   $\Re(s)>-\frac12+ 2\delta$
 with  simple poles at $1-a_{i_1}-b_{i_2}$ and $\{-a_{j_1}-b_{j_2}\}_{j_1\neq i_1,j_2\neq i_2}$. 
 Going back to \eqref{Ji1i2}, we move the line of integration in $s_1$ to $\Re(s_1)=\varepsilon_1$  with $\varepsilon_1=2\delta$. In doing so, we pass a pole of $\frac{\Gamma(\frac12-b_{i_2}-s_2+it)}{\Gamma(\frac12+a_{i_1}+s_{1}+it)}
   +\frac{\Gamma(\frac12-a_{i_1}-s_{1}-it)}{\Gamma(\frac12+b_{i_2}+s_2-it)} $ at $s_1=\frac12-a_{i_1}-it$ which contributes an error term of $o(1)$
   as $T \to \infty$. Next, we move the line of integration in $s_2$ to $\Re(s_2)=\varepsilon_1$,  passing a pole of $\frac{\Gamma(\frac12-b_{i_2}-s_2+it)}{\Gamma(\frac12+a_{i_1}+s_{1}+it)}
   +\frac{\Gamma(\frac12-a_{i_1}-s_{1}-it)}{\Gamma(\frac12+b_{i_2}+s_2-it)} $ at $s_2=\frac12-b_{i_2}+it$ which also contributes an error term of $o(1)$ as $T \to \infty$. Notice that the pole of $H_{\I,\J; \{ a_{i_1} \}, \{ b_{i_2} \}}(s_1+s_2)$ at $s_2=1-a_{i_1}-b_{i_2}-s_1$ cancels with a corresponding zero of $\frac{\Gamma(\frac12-b_{i_2}-s_2+it)}{\Gamma(\frac12+a_{i_1}+s_{1}+it)}
   +\frac{\Gamma(\frac12-a_{i_1}-s_{1}-it)}{\Gamma(\frac12+b_{i_2}+s_2-it)} $, and the other poles at $s_2=-a_{j_1}-b_{j_2}-s_1$ for $j_1\neq i_1,\;j_2\neq i_2$ are avoided by our choice of $\varepsilon_1$. 
   Now we have  
   \begin{equation}\label{eqn:s=epsilon_1}
   \begin{split}
  &  J_{i_1,i_2}
  =\frac{c_{i_{1},i_{2}}}{(2\pi i)^2}\int_{-\infty}^{\infty}\omega(t)\int_{(\varepsilon_1)} \int_{(\varepsilon_1)}\Phi(s_1) \Phi(s_2)K^{s_1+s_2}
   H_{\I,\J; \{ a_{i_1} \}, \{ b_{i_2} \}}(s_1+s_2)\nonumber\\&\hspace{3em}\times 
 \Gamma(a_{i_1}+b_{i_2}+s_1+s_2)   \Big( 
   \frac{\Gamma(\frac12-b_{i_2}-s_2+it)}{\Gamma(\frac12+a_{i_1}+s_{1}+it)}
   +\frac{\Gamma(\frac12-a_{i_1}-s_{1}-it)}{\Gamma(\frac12+b_{i_2}+s_2-it)} \Big)
  \;ds_1  \;ds_2\;dt +O(1).
  \end{split}
  \end{equation}
   
We consider the portions of the $s_1$, $s_2$ integrals with $|\Im(s_1)|\geq t+1$ or $|\Im(s_2)|\geq t+1$. We have 

\begin{align}\label{eqn:bd-Im(s)>t+1}
&\int_{\substack{\Re(s_2)=\varepsilon_1\\|\Im(s_2)|\geq t+1}} \int_{\substack{\Re(s_1)=\varepsilon_1\\|\Im(s_1)|\geq t+1}}\Phi(s_1) \Phi(s_2)K^{s_1+s_2}H_{\I,\J; \{ a_{i_1} \} , \{ b_{i_2} \}}(s_1+s_2)\nonumber\\&\hspace{1em}\times 
 \Gamma(a_{i_1}+b_{i_2}+s_1+s_2)   \Big( 
   \frac{\Gamma(\frac12-b_{i_2}-s_2+it)}{\Gamma(\frac12+a_{i_1}+s_{1}+it)}
   +\frac{\Gamma(\frac12-a_{i_1}-s_{1}-it)}{\Gamma(\frac12+b_{i_2}+s_2-it)} \Big)
  \;ds_1  \;ds_2\nonumber\\&\ll K^{2\varepsilon_1}\int_{\substack{\Re(s_2)=\varepsilon_1\\|\Im(s_2)|\geq t+1}} \int_{\substack{\Re(s_1)=\varepsilon_1\\|\Im(s_1)|\geq t+1}}\left|\Phi(s_1) \right|\left|\Phi(s_2)\right|
  |H_{\I,\J; \{ a_{i_1} \} , \{ b_{i_2} \}}(s_1+s_2)|\nonumber\\&\hspace{1em}\times 
\left| \Gamma(a_{i_1}+b_{i_2}+s_1+s_2)\right|   \Big| 
   \frac{\Gamma(\frac12-b_{i_2}-s_2+it)}{\Gamma(\frac12+a_{i_1}+s_{1}+it)}
   +\frac{\Gamma(\frac12-a_{i_1}-s_{1}-it)}{\Gamma(\frac12+b_{i_2}+s_2-it)} \Big|
  \;\left|ds_1\right|  \;\left|ds_2\right|
  \end{align}
  Using \eqref{Phibd2}, \eqref{eqn:weak-stirling}, Lemma \ref{Stirling} (ii) and Lemma \ref{lem:Hbounds}, we see that \eqref{eqn:bd-Im(s)>t+1} is  \begin{align}&\ll K^{2\varepsilon_1}t^{-2}\int_{\substack{\Re(s_2)=\varepsilon_1\\|\Im(s_2)|\geq t+1}} \int_{\substack{\Re(s_1)=\varepsilon_1\\|\Im(s_1)|\geq t+1}}\left|\Im(s_1) \right|^{-m}\left|\Im(s_2)\right|^{-m}|H_{\I,\J; \{ a_{i_1} \} , \{ b_{i_2} \}}(s_1+s_2)|\nonumber\\&\hspace{1em}\times 
\left| \Gamma(a_{i_1}+b_{i_2}+s_1+s_2)\right| (\Im(s_1)^2+\Im(s_1)^2)  \exp\left(\frac{\pi}{2}\left(|\Im(s_1+s_2)|\right)\right)
  \;\left|ds_1\right|  \;\left|ds_2\right|
  \\&\ll K^{2\varepsilon_1}t^{-2}.
\end{align}
   It follows that the contribution to \eqref{eqn:s=epsilon_1} arising from $\left|\Im (s_1)\right|, \left|\Im (s_2)\right| \ge t+1$  is 
   $\ll T^{2\varepsilon_1(1+\eta)}T^{-1}=O(1)$. 
   The contribution to \eqref{eqn:s=epsilon_1} arising from the other portions is likewise $O(1)$.
    Therefore, we have 
   \begin{align}\label{eqn:s=epsilon_1+it}
    J_{i_1,i_2}
   &= \frac{c_{i_{1},i_{2}}}{(2\pi i)^2}\int_{-\infty}^{\infty}\omega(t)\int_{\varepsilon_1-i(t+1)}^{\varepsilon_1+i(t+1)} \int_{\varepsilon_1-i(t+1)}^{\varepsilon_1+i(t+1)}\Phi(s_1) \Phi(s_2)K^{s_1+s_2}H_{\I,\J; \{ a_{i_1} \} , \{ b_{i_2} \}}(s_1+s_2)\nonumber\\&\hspace{3em}\times 
 \Gamma(a_{i_1}+b_{i_2}+s_1+s_2)   \Big( 
   \frac{\Gamma(\frac12-b_{i_2}-s_2+it)}{\Gamma(\frac12+a_{i_1}+s_{1}+it)}
   +\frac{\Gamma(\frac12-a_{i_1}-s_{1}-it)}{\Gamma(\frac12+b_{i_2}+s_2-it)} \Big)
  \;ds_1  \;ds_2\;dt +O(1)\nonumber\\&=\frac{c_{i_{1},i_{2}}}{(2\pi i)^2}\int_{-\infty}^{\infty}\omega(t)\int_{\varepsilon_1-i(t+1)}^{\varepsilon_1+i(t+1)} \int_{\varepsilon_1-i(t+1)}^{\varepsilon_1+i(t+1)}\Phi(s_1) \Phi(s_2)K^{s_1+s_2}H_{\I,\J; \{ a_{i_1} \} , \{ b_{i_2} \}}(s_1+s_2)\nonumber\\&\hspace{3em}\times
 \Gamma(a_{i_1}+b_{i_2}+s_1+s_2)  2t^{-a_{i_1}-b_{i_2}-s_1-s_2}\cos\left(\frac{\pi}{2}(a_{i_1}+b_{i_2}+s_1+s_2)\right)\;ds_1\;ds_2\;dt\nonumber\\&\hspace{1em}+\frac{c_{i_{1},i_{2}}}{(2\pi i)^2}\int_{-\infty}^{\infty}\omega(t)\int_{\varepsilon_1-i(t+1)}^{\varepsilon_1+i(t+1)} \int_{\varepsilon_1-i(t+1)}^{\varepsilon_1+i(t+1)}\Phi(s_1) \Phi(s_2)K^{s_1+s_2}H_{\I,\J; \{ a_{i_1} \} , \{ b_{i_2} \}}(s_1+s_2)\nonumber\\&\hspace{3em}\times
 \Gamma(a_{i_1}+b_{i_2}+s_1+s_2) O\left(\exp\left(\frac{\pi}{2}|\Im\left(s_1+s_2\right)|\right)\left(\frac{1+|s_{1}|^2+|s_{2}|^2}{t^{1-2\delta+\Re(s_1+s_2)}}\right)\right)
  \;ds_1  \;ds_2\;dt +O(1),\end{align}
  where the last equality follows from  \eqref{sum-of-gamma-quotients}. 
  
   We use  \eqref{Phibd}, \eqref{eqn:weak-stirling}  and Lemma \ref{lem:Hbounds} to get that the contribution of the second term to \eqref{eqn:s=epsilon_1+it}  is bounded by $T^{8\delta}\log T$.
 Therefore,
  \begin{align*}
   J_{i_{1},i_{2}}&=\frac{c_{i_{1},i_{2}}}{(2\pi i)^2}\int_{-\infty}^{\infty}\omega(t)\int_{\varepsilon_1-i(t+1)}^{\varepsilon_1+i(t+1)} \int_{\varepsilon_1-i(t+1)}^{\varepsilon_1+i(t+1)}\Phi(s_1) \Phi(s_2)K^{s_1+s_2}H_{\I,\J; \{ a_{i_1} \} , \{ b_{i_2} \}}(s_1+s_2)\nonumber\\
   &\times
 \Gamma(a_{i_1}+b_{i_2}+s_1+s_2)  2t^{-a_{i_1}-b_{i_2}-s_1-s_2}\cos\left(\frac{\pi}{2}(a_{i_1}+b_{i_2}+s_1+s_2)\right)\;ds_1\;ds_2\;dt+ O(T^{8\delta}\log T).
  \end{align*}
 Since 
 \[\Gamma(a_{i_1}+b_{i_2}+s_1+s_2)\cos\left(\frac{\pi}{2}(a_{i_1}+b_{i_2}+s_1+s_2)\right)\ll \left|\Im(s_1+s_2)\right|^{\Re(s_1+s_2)+2\delta-\frac12},\] the part of the integral where $\left|\Im(s_1)\right|\geq t+1$ or $\left|\Im(s_2)\right|\geq t+1$ is $O(1)$ and can be added to the error term. Hence, we may extend the bounds of integration in $s_1$ and $s_2$ to all of $\Re(s_1)=\varepsilon_1$ and $\Re(s_2)=\varepsilon_1$. This yields,
 
   \begin{align*}
   J_{i_{1},i_{2}}&=\frac{c_{i_{1},i_{2}}}{(2\pi i)^2}\int_{-\infty}^{\infty}\omega(t)\int_{(\varepsilon_1)}\int_{(\varepsilon_1)}\Phi(s_1) \Phi(s_2)K^{s_1+s_2}H_{\I,\J; \{ a_{i_1} \} , \{ b_{i_2} \}}(s_1+s_2)\Gamma(a_{i_1}+b_{i_2}+s_1+s_2)\nonumber \\
  &\hspace{1em} \times  2 t^{-a_{i_1}-b_{i_2}-s_1-s_2}\cos\left(\frac{\pi}{2}(a_{i_1}+b_{i_2}+s_1+s_2)\right)\;ds_1\;ds_2\;dt+ O(T^{8\delta}\log T).
 \end{align*}
 Observe that the inner double integral is of the shape $\int_{(\varepsilon_1)}\int_{(\varepsilon_1)} \Phi(s_1) \Phi(s_2)f(s_1+s_2) \, ds_2 \, ds_1$, where $f(z) = K^{z}H_{\I,\J; \{ a_{i_1} \} , \{ b_{i_2} \}}(z)
 \Gamma(a_{i_1}+b_{i_2}+z)  2t^{-a_{i_1}-b_{i_2}-z}\cos(\frac{\pi}{2}(a_{i_1}+b_{i_2}+z))$.  In the inner integral, we make the 
 variable change $s=s_1+s_2$ and then change order of integration to find
 \begin{align*}
  \frac{1}{2 \pi i} \int_{(\varepsilon_1)}\int_{(\varepsilon_1)} \Phi(s_1) \Phi(s_2)f(s_1+s_2) \, ds_2 \, ds_1
   & = \int_{(2\varepsilon_1)} \Phi_2(s) f(s) \, ds,
 \end{align*} 
 where we recall that  $\Phi_2$ is defined in \eqref{phi2}.  It follows that 
 \begin{align*}
  J_{i_{1},i_{2}}
&=\frac{c_{i_{1},i_{2}}}{2\pi i}\int_{-\infty}^{\infty}\omega(t)\int_{\Re(s)=2\varepsilon_1}\Phi_2(s) K^{s}H_{\I,\J; \{ a_{i_1} \} , \{ b_{i_2} \}}(s)
 \Gamma(a_{i_1}+b_{i_2}+s)\nonumber\\&\hspace{1em}\times  2t^{-a_{i_1}-b_{i_2}-s}\cos\left(\frac{\pi}{2}(a_{i_1}+b_{i_2}+s)\right)\;ds\;dt+ O(T^{8\delta}\log T).
  \end{align*}
By the functional equation $\zeta(1-z)=2^{1-z}\pi^{-z}\cos\left(\frac{\pi}{2}z\right)\Gamma(z)\zeta(z)$
and \eqref{Hfactorization}
it follows that 
 \begin{align*}
 &2H_{\I,\J; \{ a_{i_1} \} , \{ b_{i_2} \}}(s)
 \Gamma(a_{i_1}+b_{i_2}+s)\cos\left(\frac{\pi}{2}(a_{i_1}+b_{i_2}+s)\right)\\
 &=2\Big(\zeta(a_{i_1}+b_{i_2}+s)\prod_{\substack{a_{j_1}\in \I\backslash\{a_{i_1}\}\\b_{j_2}\in \J\backslash\{b_{i_2}\}}}\zeta(1+a_{j_1}+b_{j_2}+s)\Big) \mathcal{C}_{\I,\J,a_{i_1},b_{i_2}}(s)\\&\hspace{4em}\times \Gamma(a_{i_1}+b_{i_2}+s)\cos\left(\frac{\pi}{2}(a_{i_1}+b_{i_2}+s)\right)\\&=\prod_{\substack{a_{j_1}\in \I\backslash\{a_{i_1}\}\\b_{j_2}\in \J\backslash\{b_{i_2}\}}}\zeta(1+a_{j_1}+b_{j_2}+s)
 \mathcal{C}_{\I,\J,a_{i_1},b_{i_2}}(s)\zeta(1-a_{i_1}-b_{i_2}-s)(2\pi)^{a_{i_1}+b_{i_2}+s}.
 \end{align*}
Hence, 
\begin{align*}
   J_{i_{1},i_{2}}&=\frac{c_{i_{1},i_{2}}}{2\pi i}\int_{-\infty}^{\infty}\omega(t)\int_{\Re(s)=2\varepsilon_1}\Phi_2(s) K^{s} \left( \frac{2\pi}{t}\right)^{a_{i_1}+b_{i_2}+s}\prod_{\substack{a_{j_1}\in \I\backslash\{a_{i_1}\}\\b_{j_2}\in \J\backslash\{b_{i_2}\}}}\zeta(1+a_{j_1}+b_{j_2}+s)\\&\hspace{3em}\times \mathcal{C}_{\I,\J,a_{i_1},b_{i_2}}(s)\zeta(1-a_{i_1}-b_{i_2}-s)\;ds\;dt+ O(T^{8\delta}\log T).
\end{align*}
By an application of \eqref{CAidentity} we obtain 
\begin{align*}
    J_{i_{1},i_{2}}&=\frac{c_{i_{1},i_{2}}}{2\pi i}\int_{-\infty}^{\infty}\omega(t)\int_{\Re(s)=2\varepsilon_1}\Phi_2(s) K^{s} \left( \frac{2\pi}{t}\right)^{a_{i_1}+b_{i_2}+s}\zeta(1-a_{i_1}-b_{i_2}-s)  \\
   & \times
   \prod_{\substack{a_{j_1}\in \I\backslash\{a_{i_1}\}\\b_{j_2}\in \J\backslash\{b_{i_2}\}}}\zeta(1+a_{j_1}+b_{j_2}+s)
    \mathcal{A}( (\I \setminus \{ a_{i_1} \}) \cup \{ -b_{i_2}-s \},   
     ((\J \setminus \{ b_{i_2} \})+s)
      \cup \{ - a_{i_1} \} )\;ds\;dt\\&+ O(T^{8\delta}\log T).
 \end{align*}
 Hence,
\begin{align}\label{eqn:offdiag-CK}
&\sum_{i_{1}=1}^{k}\sum_{i_{2}=1}^{\ell}J_{i_{1},i_{2}}=\sum_{i_{1}=1}^{k}\sum_{i_{2}=1}^{\ell}\frac{c_{i_{1},i_{2}}}{2\pi i}\int_{-\infty}^{\infty}\omega(t)\int_{\Re(s)=2\varepsilon_1}\Phi_2(s) K^{s} \left( \frac{2\pi}{t}\right)^{a_{i_1}+b_{i_2}+s}\zeta(1-a_{i_1}-b_{i_2}-s)\nonumber\\
&\hspace{3em} \times
\prod_{\substack{a_{j_1}\in \I\backslash\{a_{i_1}\}\\b_{j_2}\in \J\backslash\{b_{i_2}\}}}\zeta(1+a_{j_1}+b_{j_2}+s) \mathcal{A}( (\I \setminus \{ a_{i_1} \}) \cup \{ -b_{i_2}-s \},   
     ((\J \setminus \{ b_{i_2} \})+s)
      \cup \{ - a_{i_1} \} )\;ds\;dt\nonumber\\&\hspace{3em}+ O(T^{8\delta}\log T).
\end{align}
We now remark that from definitions \eqref{ci1i2} and \eqref{ZIJid} we have 
\begin{equation*}
   c_{i_{1},i_{2}} = 
   \Z(\I \backslash \{ a_{i_1} \}, \{ -a_{i_1} \}) \Z( \{ - b_{i_2} \}, \J \backslash \{ -b_{i_2} \}),
\end{equation*}
and similarly from \eqref{ZIJid} we have 
\begin{equation*}
 \prod_{\substack{a_{j_1}\in \I\backslash\{a_{i_1}\}\\b_{j_2}\in \J\backslash\{b_{i_2}\}}}\zeta(1+a_{j_1}+b_{j_2}+s)
 = \Z(\I \backslash \{ a_{i_1} \} + \{ s \}  , \J \backslash \{ b_{i_2} \}  ).
\end{equation*}
From these identities, it follows from  \eqref{eqn:offdiag-CK} and \eqref{M1swaps} that 
\begin{equation*}
   \sum_{i_{1}=1}^{k}\sum_{i_{2}=1}^{\ell}J_{i_{1},i_{2}}
   =   \mathcal{M}_{1,\I, \J; \omega}(K)  + O(T^{8\delta}\log T).
\end{equation*}
\end{proof}

\section{The function $H_{\I,\J; \{ a_{i_1} \}, \{ b_{i_2} \}}(s)$}\label{sec:Hfunction}

In this section, we study the behaviour of the Dirichlet series $H_{\I,\J, \{ a_{i_1} \} , \{ b_{i_2} \}}(s)$  that was introduced in Section \ref{sec:off-diag}:

\[H_{\I,\J; \{ a_{i_1} \} , \{ b_{i_2} \}}(s)=\sum_{r=1}^{\infty}\sum_{q=1}^{\infty} \frac{c_q(r)G_{\I}(1-a_{i_1},q)G_{\J}(1-b_{i_2},q)  }{q^{2-a_{i_1}-b_{i_2}}r^{a_{i_1}+b_{i_2}+s}}.\]

For the rest of this section,  we establish the proof of Proposition \ref{Hid}. 
We require  \cite[Lemma~7.2]{Ng} which we state as follows. 
\begin{lem} \label{Gmult}
Let $k \in \mathbb{N}$, $I=\{1, \ldots, k \}$, and
$X =\{ x_1, x_2, \ldots, x_k\}$, where the $x_{i}$'s are distinct complex numbers. 
For $\Re(s) > -\min_{i=1, \ldots k} \Re(x_{i})$,
$p$ prime and $j \ge 1$, we have
\begin{equation}
 \label{GXspj}
  G_{X}(s,p^j) = 
(1-p^{-s-x_1}) \cdots (1-p^{-s-x_k}) \frac{1}{p-1}
   \sum_{i=1}^{k}
  \frac{ p^{1-x_i j} - p^{s-x_i (j-1)}}{1-p^{-x_i-s}}
  \prod_{\ell \in I \setminus \{ i \}} (1-p^{x_i-x_{\ell}})^{-1}.
\end{equation}
\end{lem}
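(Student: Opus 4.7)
\medskip

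\textbf{Proof proposal.} The plan is to reduce everything to a clean generating-function identity for $\sigma_X(p^m)$ via partial fractions, then plug in to the defining expressions for $g_X$ and $G_X$.

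First I would start from the standard Euler-product identity
\begin{equation*}
\sum_{m=0}^{\infty}\sigma_X(p^m)T^m = \prod_{i=1}^{k}\frac{1}{1-p^{-x_i}T}.
\end{equation*}
Because the $x_i$ are distinct, the $p^{-x_i}$ are distinct, so partial fractions gives
\begin{equation*}
\prod_{i=1}^{k}\frac{1}{1-p^{-x_i}T}=\sum_{i=1}^{k}\frac{A_i}{1-p^{-x_i}T},\qquad A_i=\prod_{\ell\in I\setminus\{i\}}\frac{1}{1-p^{x_i-x_\ell}},
\end{equation*}
and hence the closed form $\sigma_X(p^m)=\sum_{i=1}^{k}A_i\,p^{-x_i m}$. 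Setting $T=p^{-s}$ and shifting by $j$ yields
\begin{equation*}
\sum_{m=0}^{\infty}\frac{\sigma_X(p^{m+j})}{p^{ms}}=\sum_{i=1}^{k}\frac{A_i\,p^{-x_ij}}{1-p^{-s-x_i}}.
\end{equation*}
Combined with the $j=0$ case of the same identity and the definition \eqref{eqn:g-mult}, this gives
\begin{equation*}
g_X(s,p^j)=\prod_{i=1}^{k}(1-p^{-s-x_i})\,\sum_{i=1}^{k}\frac{A_i\,p^{-x_ij}}{1-p^{-s-x_i}}.
\end{equation*}

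Next I would specialize \eqref{eqn:G-mult} at $n=p^j$. Since $\mu(d)$ vanishes on non-squarefree integers, only $d\in\{1,p\}$ (and for each such $d$, only $e\mid d$) contribute. A direct bookkeeping of the four resulting terms (with $\phi(p)=p-1$) collapses to
\begin{equation*}
G_X(s,p^j)=\frac{p}{p-1}\,g_X(s,p^j)-\frac{p^s}{p-1}\,g_X(s,p^{j-1}).
\end{equation*}
Substituting the closed form for $g_X(s,p^j)$ and $g_X(s,p^{j-1})$, the prefactor $\prod_i(1-p^{-s-x_i})$ pulls out, and the $i$-th summand becomes
\begin{equation*}
\frac{A_i\bigl(p\cdot p^{-x_i j}-p^s\cdot p^{-x_i(j-1)}\bigr)}{(p-1)(1-p^{-s-x_i})}=\frac{A_i\bigl(p^{1-x_ij}-p^{s-x_i(j-1)}\bigr)}{(p-1)(1-p^{-s-x_i})},
\end{equation*}
which is exactly the summand in \eqref{GXspj}. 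This completes the derivation.

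The only potential obstacle is purely bookkeeping: one must track signs carefully in the $d=p$, $e=p$ term (where the two Möbius factors give a $+$ that cancels against the $d=1$ term to produce the $p/(p-1)$ coefficient), and ensure the partial fraction constants $A_i$ agree with those in \eqref{GXspj}. No analytic issue arises since the derivation is purely a formal identity between rational functions of $p^{-s}$, valid on the half-plane $\Re(s)>-\min_i\Re(x_i)$ where the underlying series converges.
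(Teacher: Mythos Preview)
Your proof is correct. The paper does not actually prove this lemma itself; it simply quotes it as \cite[Lemma~7.2]{Ng} and moves on, so there is no in-paper argument to compare against. Your partial-fraction derivation of $\sigma_X(p^m)$, the resulting closed form for $g_X(s,p^j)$, and the reduction $G_X(s,p^j)=\tfrac{p}{p-1}g_X(s,p^j)-\tfrac{p^s}{p-1}g_X(s,p^{j-1})$ from the definition \eqref{eqn:G-mult} are all sound (minor quibble: there are three nonzero $(d,e)$ terms, not four, but your collapsed formula is correct). This is the natural direct verification and is presumably what the cited reference does as well.
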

We now further simplify the multiplicative functions $ G_{X}(s,p^j)$.   We shall express $ G_{X}(s,p^j)$ in terms of the rational function
\begin{equation}
  \label{Fa}
F_a(Y_1, \ldots, Y_m;Z) :=  \sum_{i=1}^{m} Y_i^a 
 \prod_{\ell \in M \backslash \{ i \}}   \frac{(1 - Z Y_{\ell})}{(1-Y_{\ell}/Y_i)},
 \end{equation}
 where $Y_1, \ldots, Y_m,Z$ are variables and $M=\{1,2,\dots,m\}$.  A key point will be to demonstrate that $F_a$ is a polynomial and this shall be established 
 in the combinatorial result,  Lemma \ref{comb}, which follows. This lemma was  proven by the authors in the case $a=1$.  In the case $a=2$, Gabriel Verret conjectured the formula \eqref{newidentity}, where
$q_{2,j}$ is given by \eqref{q2j}.  Based on this,  Dave Morris extended the lemma to the case  $a >1$.  In order to describe the polynomials $q_{a,j}$ which appear in the lemma, we
require the elementary symmetric polynomials.  Associated to variables $Y_1, \ldots, Y_m$, we let
\begin{align}
  e_1 := e_1(Y_1, \ldots, Y_{m}) & = Y_1 + \cdots + Y_m, \\
  e_2 := e_2(Y_1, \ldots, Y_{m}) & = Y_1 Y_2 + \cdots + Y_{m-1}Y_m = \sum_{1 \le i_1 < i_2 \le m}
  Y_{i_1} Y_{i_2}, \\
  & \vdots   \nonumber \\
  e_j := e_{j} (Y_1, \ldots, Y_{m}) & =   \sum_{1 \le i_1 < i_2 < \cdots < i_j \le m}
  Y_{i_1} Y_{i_2} \cdots Y_{i_j}, \\
  & \vdots \nonumber \\
  e_m :=e_{m}(Y_1, \ldots, Y_{m}) & =Y_1 \cdots Y_m. 
\end{align} 
It is convenient to set 
$e_0 := e_0(Y_1, \ldots, Y_m)=1$ and  $
 e_{m+1} := e_{m+1}(Y_1, \ldots, Y_m) =0$. 
 With this notation in hand, we may now state the lemma.
\begin{lem} \label{comb}
Let $a, m \in \mathbb{N}$. Then  
   \begin{equation}
 \label{newidentity}
 F_a(Y_1, \ldots, Y_m;Z) 
  = \sum_{j=0}^{m-1} q_{a,j}(Y_1, \ldots, Y_m) Z^{j},
\end{equation}
where $q_{a,j}(Y_1, \ldots, Y_m)$ is a polynomial in $\mathbb{Q}[Y_1, \ldots, Y_m]$ of degree $j+a$. 
We have
\begin{align}
  \label{q1j}
   q_{1,j} & : = q_{1,j}(Y_1, \ldots, Y_m) = (-1)^{j} e_{j+1}, \\
   \label{q2j}
   q_{2,j} &  : = q_{2,j}(Y_1, \ldots, Y_m) = (-1)^{j-1} (e_{j+2}-e_1 e_{j+1}  ) ,
\end{align}
and in general, 
\begin{equation}
  \label{qaj}
 q_{a,j}:=q_{a,j}(Y_1, \ldots, Y_m) \in \mathbb{Z}[e_1, \ldots, e_m].
\end{equation}
\end{lem}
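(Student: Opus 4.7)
The plan is to reinterpret each summand of $F_a$ through partial fractions, turning the claim into a classical symmetric function identity. First I would extract the coefficient of $Z^j$ directly. Writing $e_j^{(i)} := e_j(Y_1,\ldots,\widehat{Y_i},\ldots,Y_m)$, the coefficient of $Z^j$ in $\prod_{\ell\neq i}(1-ZY_\ell)$ is $(-1)^j e_j^{(i)}$, and the factor $\prod_{\ell\neq i}(1-Y_\ell/Y_i)^{-1}$ equals $Y_i^{m-1}/\prod_{\ell\neq i}(Y_i-Y_\ell)$. Therefore
\begin{equation*}
q_{a,j}(Y_1,\ldots,Y_m)=(-1)^j\sum_{i=1}^m\frac{Y_i^{a+m-1}\,e_j^{(i)}}{\prod_{\ell\neq i}(Y_i-Y_\ell)}.
\end{equation*}
This already shows $F_a$ is polynomial in $Z$ of degree at most $m-1$, so all that remains is to prove polynomiality in the $Y_i$ and compute the coefficients.

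The main tool is the partial fraction expansion of the generating function for the complete homogeneous symmetric polynomials. Expanding
\begin{equation*}
\prod_{\ell=1}^m\frac{1}{1-ZY_\ell}=\sum_{i=1}^m\frac{1}{(1-ZY_i)\prod_{\ell\neq i}(1-Y_\ell/Y_i)}
\end{equation*}
and comparing with $\sum_{n\geq 0}h_n(Y_1,\ldots,Y_m)Z^n$ yields the classical identity
\begin{equation*}
h_n(Y_1,\ldots,Y_m)=\sum_{i=1}^m\frac{Y_i^{n+m-1}}{\prod_{\ell\neq i}(Y_i-Y_\ell)}\qquad(n\geq 0).
\end{equation*}
Next, factoring $\prod_\ell(1-ZY_\ell)=(1-ZY_i)\prod_{\ell\neq i}(1-ZY_\ell)$ and comparing coefficients of $Z^j$ gives the recursion $e_j=e_j^{(i)}+Y_i e_{j-1}^{(i)}$, which unfolds to $e_j^{(i)}=\sum_{k=0}^j(-Y_i)^k e_{j-k}$. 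Substituting this into the formula for $q_{a,j}$ and interchanging sums produces
\begin{equation*}
q_{a,j}=(-1)^j\sum_{k=0}^j(-1)^k e_{j-k}\,h_{a+k}(Y_1,\ldots,Y_m),
\end{equation*}
because the inner sum over $i$ becomes exactly $h_{a+k}$ (the exponent $a+k\geq 1$ keeps us in the valid range). Since $h_n$ and $e_n$ both lie in $\mathbb{Z}[e_1,\ldots,e_m]$ by the fundamental theorem of symmetric polynomials (or Newton's identities), this establishes \eqref{qaj}, and the degree count $\deg h_{a+k}+\deg e_{j-k}=a+j$ gives $\deg q_{a,j}=j+a$.

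Finally, the explicit formulas for $a=1,2$ drop out from the Newton-type relation $E(t)H(-t)=1$, which gives $\sum_{k=0}^n(-1)^k e_{n-k}h_k=0$ for every $n\geq 1$. Applied with $n=j+1$ it yields $\sum_{k=0}^j(-1)^k e_{j-k}h_{k+1}=e_{j+1}$, whence $q_{1,j}=(-1)^j e_{j+1}$, proving \eqref{q1j}. Applied with $n=j+2$ and isolating the first two terms ($h_0=1$, $h_1=e_1$), it gives $\sum_{k=0}^j(-1)^k e_{j-k}h_{k+2}=-(e_{j+2}-e_1e_{j+1})$, whence $q_{2,j}=(-1)^{j-1}(e_{j+2}-e_1e_{j+1})$, proving \eqref{q2j}. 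The only mildly delicate point is justifying the inner-sum identity for $h_n$; everything else is routine manipulation of generating functions, so I do not expect any real obstacle once that partial fraction expansion is in hand.
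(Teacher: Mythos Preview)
Your proof is correct and takes a genuinely different route from the paper's.  The paper works with the partial fraction decomposition of $1/(Z^{a}P(Z))$ where $P(Z)=\prod_{i}(1-ZY_i)$: the sum of the simple-pole pieces at $Z=Y_i^{-1}$ is recognised as $F_a(Z)/P(Z)$, so that $F_a(Z)=(1-P(Z)R(Z))/Z^{a}$ with $R(Z)$ the principal part at $Z=0$.  Differentiating $1=P R+Z^{a}F_a$ at $Z=0$ then yields a recursion expressing the coefficients of $R$ (and hence $q_{a,j}=-\theta_{a+j}$) as integer polynomials in $e_1,\ldots,e_m$; the cases $a=1,2$ are read off from the first steps of that recursion.

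You instead use partial fractions only on $1/P(Z)$ to obtain the classical identity $h_n=\sum_i Y_i^{\,n+m-1}/\prod_{\ell\neq i}(Y_i-Y_\ell)$, combine it with the elementary expansion $e_j^{(i)}=\sum_{k}(-Y_i)^{k}e_{j-k}$, and arrive at the closed form $q_{a,j}=(-1)^{j}\sum_{k=0}^{j}(-1)^{k}e_{j-k}\,h_{a+k}$.  This is shorter, produces an explicit formula valid for all $a$ simultaneously, and makes the specialisations \eqref{q1j}--\eqref{q2j} drop out from a single use of $E(-t)H(t)=1$; the paper's recursion gives the same information but only implicitly.  The one place you should be slightly careful is the degree assertion: your formula immediately gives $\deg q_{a,j}\le a+j$, which is all that is actually used downstream (in the bound \eqref{Fbound}); the paper is equally informal on the reverse inequality.
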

A direct consequence of this lemma, is the following bound.  Let $Z \in \C$ and $X \geq1$.  If $|Y_j| \le X$ for each $1 \le j \le m$, then 
\begin{equation}
  \label{Fbound}
  | F_a(Y_1, \ldots, Y_m;Z)| \ll \sum_{j=0}^{m-1} X^{j+a} |Z|^j.
\end{equation}
\begin{proof}[Proof of Lemma \ref{comb}]
In order to simplify notation we set $F= F_a(Y_1, \ldots, Y_m;Z)$
and write
\begin{equation}
  \label{P}
 P=P(Y_1, \ldots, Y_m;Z) := \prod_{i \in M}  (1-ZY_i) =  \sum_{j=0}^{m} (-1)^j e_j Z^j.
\end{equation}
  Note that since $P(Z) = \sum_{j=0}^{m} (P^{(j)}(0)/j!) Z^j$, it follows that
\begin{equation}
  \label{Pj0}
   P^{(j)}(0) = (-1)^j j! e_j \text{ for }
   0 \le j \le m.  
\end{equation}
Let us consider the partial fraction 
decomposition of $1/(Z^aP)$.  This is of the form 
\begin{equation}\label{partial-fractions}
   \frac{1}{Z^aP} = \sum_{j=1}^{a} \frac{c_j}{Z^j}
   + \sum_{i=1}^{m} \frac{ Y_{i}^{a}}{\prod_{\ell \in M \backslash \{ i \} } (1-Y_{\ell}/Y_i)   } \cdot \frac{1}{ (1-ZY_i)}, 
\end{equation}
for certain polynomials $c_j \in \R[Y_1,\dots,Y_m]$. 
  Letting \begin{equation}\label{R}R(Z) := \sum_{i=0}^{a-1} c_{a-i} Z^i,\end{equation} we can rewrite this as 
\begin{equation*}
   \frac{1}{Z^aP} =  \sum_{j=1}^{a} \frac{c_j}{Z^j} + \frac{F(Z)}{P(Z)}
   =\frac{R(Z)}{Z^a} + \frac{F(Z)}{P(Z)}
\end{equation*}
which follows from \eqref{Fa} and \eqref{P}.
Note that
\begin{equation}
  \label{Rj0}
  R^{(j)}(0) = j! c_{a-j} \text{ for } 0 \le j \le a-1. \end{equation}
We now compute the coefficients $c_j$.   
Rearranging \eqref{partial-fractions} gives
\begin{equation}
  \label{pfnum}
  1 = P(Z)R(Z) +F(Z)Z^a.
\end{equation}
Letting $Z=0$   we obtain
\begin{equation}
  \label{caval}
 1= P(0)R(0) = c_a. 
\end{equation}

Then we differentiate \eqref{pfnum} $i$ times where $1 \le i \le a-1$.  Observe
that $\left.  \frac{d^i}{dZ^{i}} (F(Z) Z^a)  \right|_{Z=0} =0$, and thus we obtain 
by the generalized product rule, \eqref{Pj0},  and \eqref{Rj0}
\begin{equation}
   \label{pfnumdiff}
   0 =  \sum_{u+v=i} \binom{i}{u} P^{(u)}(0) R^{(v)}(0)
   =   \sum_{ u+v=i }   \binom{i}{u}   (-1)^u u! e_u v! c_{a-v}.
\end{equation}
Simplifying yields the condition
\begin{equation*}
  0 = \sum_{u+v=i} (-1)^u e_u c_{a-v}
  \text{ for } 1 \le i \le a-1. 
\end{equation*}
Note that if $i=1$ then this simplifies to $c_{a-1} = c_a \cdot e_1 = e_1$ 
(since $e_0 =1$), and we have used \eqref{caval}. 
Since $e_0=1$, we have
\begin{equation*}
  c_{a-i} = - \sum_{\substack{u+v=i \\ u \ge 1}} (-1)^u e_u c_{a-v}
  \text{ for } 1 \le i \le a-1.  
\end{equation*}
It follows from this that 
 $c_{a-i} \in \mathbb{Z}[e_1, \ldots, e_i]$ and $\text{degree}(c_{a-i})=i$.   From \eqref{pfnum} and the definitions of $P(Z)$ and $R(Z)$ in \eqref{P} and \eqref{R}, we get 
\begin{equation*}
  F =  \frac{1-P(Z) R(Z)}{Z^a} 
  = Z^{-a} \Big( 1 - \sum_{i=0}^{m+a-1}  \theta_i  Z^i  \Big),
  \text{ where }
  \theta_i = \sum_{ \substack{u+v=i \\ 0 \le u \le m \\ 0 \le v \le a-1} }   (-1)^u e_u c_{a-v}.
\end{equation*}
From the definition of $\theta_i$ and \eqref{caval}, we observe that $\theta_0 =1$. In addition, we have $\theta_i =0$ for $1 \le i \le a-1$ since $\left. \frac{d^i}{dZ^{i}} \left(P(Z)R(Z)\right)  \right|_{Z=0} =0$. This follows from \eqref{pfnum} and the observation that  $\left. \frac{d^i}{dZ^{i}} \left(F(Z) Z^a\right)  \right|_{Z=0} =0$. Thus, we have 
\begin{equation*}
   F = - \sum_{i=a}^{m+a-1}  \theta_i  Z^{i-a}
   = -  \sum_{j=0}^{m-1}  \theta_{a+j}  Z^{j},
\end{equation*}
where we note that the degree of $\theta_{a+j}$ is $a+j$. Setting $q_{a,j} :=q_{a,j}(Y_1, \ldots, Y_m) = - \theta_{a+j}$ gives the desired result.  
\end{proof}

\begin{lem}  \label{GIk}
Let $k \ge 2$,  let $\I = \{a_1,a_2,a_3, \ldots, a_k \} \subset \mathbb{C}$, $p$ a prime, and $j \ge 1$.  Then
\begin{equation}
\begin{split}
  \label{GIpn}
  G_{\I}(1-a_1,p^n) 
  & =  \sum_{j=0}^{k-2} q_{n,j}(X_2, \ldots, X_k)  (X_{1}^{-1})^j 
  p^{-j},
\end{split}
\end{equation}
where $q_{n,j}$ is defined in \eqref{qaj} and
\begin{equation}
  \label{Xi}
X_i = p^{-a_i} \text{ for } 1 \le i \le k.
\end{equation}
In particular, we obtain 
\begin{equation}
\label{GIp}
  G_{\I}(1-a_1,p) 
  = \sum_{j=0}^{k-2} (-1)^{j}  e_{j+1}(X_2, \ldots, X_k) 
   (X_{1}^{-1})^j 
  p^{-j}, 
\end{equation}
and 
\begin{equation}
\begin{split}
  \label{GIp2}
  G_{\I}(1-a_1,p^2) 
  & =  \sum_{j=0}^{k-2} (-1)^{j-1} ( e_{j+2}(X_2, \ldots, X_k)-
   e_{j+1}(X_2, \ldots, X_k)  e_1(X_2, \ldots, X_k)) (X_{1}^{-1})^j 
  p^{-j}.
\end{split}
\end{equation}
For any $n \ge 1$ and $\delta \in (0,\frac{1}{2})$ satisfying \eqref{sizerestrictiondelta}, we have the bound
\begin{equation}
  \label{GIpabd}
   |G_{\I}(1-a_1,p^n)| \ll  p^{ n \delta}. 
\end{equation}  
\end{lem}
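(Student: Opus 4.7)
The plan is to specialize the general formula in Lemma \ref{Gmult} at $s = 1-a_1$ and then recognize the resulting rational expression as the polynomial $F_n(X_2, \ldots, X_k; p^{-1}X_1^{-1})$ from Lemma \ref{comb}. Setting $s=1-a_1$ and $x_i=a_i$ in \eqref{GXspj}, the $i=1$ summand vanishes, since its numerator is
\[
  p^{1-a_1 n} - p^{(1-a_1) - a_1(n-1)} = p^{1-a_1 n} - p^{1-a_1 n} = 0.
\]
Therefore only the terms $i=2,\dots,k$ survive, and after a short calculation using the identity $(1-p^{-1})p/(p-1)=1$, the prefactor simplifies so that
\[
  G_{\I}(1-a_1,p^n) = \sum_{i=2}^{k} \bigl( X_i^n - X_1 X_i^{n-1} \bigr)(1-X_1/X_i)^{-1} \prod_{\ell \in \{2,\dots,k\}\setminus\{i\}} \frac{1-p^{-1}X_\ell/X_1}{1-X_\ell/X_i}.
\]
Next I would use the algebraic simplification
\[
  (X_i^n - X_1 X_i^{n-1})(1-X_1/X_i)^{-1} = X_i^{n-1}(X_i-X_1) \cdot \frac{X_i}{X_i-X_1} = X_i^n,
\]
which eliminates the $\ell=1$ factor. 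After the index shift $Y_j := X_{j+1}$ for $1 \le j \le k-1$ and with $Z := p^{-1}X_1^{-1}$, the identity becomes exactly
\[
  G_{\I}(1-a_1,p^n) = \sum_{i=1}^{k-1} Y_i^n \prod_{\ell \in \{1,\dots,k-1\}\setminus\{i\}} \frac{1-ZY_\ell}{1-Y_\ell/Y_i} = F_n(Y_1,\dots,Y_{k-1};Z).
\]

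Applying Lemma \ref{comb} with $a=n$ and $m=k-1$, this equals
\[
  \sum_{j=0}^{k-2} q_{n,j}(X_2,\dots,X_k)\, (p^{-1}X_1^{-1})^j = \sum_{j=0}^{k-2} q_{n,j}(X_2,\dots,X_k)\, (X_1^{-1})^j p^{-j},
\]
which is precisely \eqref{GIpn}. Substituting the explicit formulas \eqref{q1j} and \eqref{q2j} for $q_{1,j}$ and $q_{2,j}$ yields \eqref{GIp} and \eqref{GIp2} immediately.

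For the bound \eqref{GIpabd}, I would apply the estimate \eqref{Fbound} to $F_n(X_2,\dots,X_k;p^{-1}X_1^{-1})$. Under \eqref{sizerestrictiondelta} we have $|X_i| = p^{-\Re(a_i)} \le p^{\delta}$ for all $i$, and $|Z| = p^{-1}|X_1^{-1}| \le p^{-1+\delta}$. Therefore
\[
  |G_{\I}(1-a_1,p^n)| \ll \sum_{j=0}^{k-2} p^{\delta(j+n)}\, p^{j(-1+\delta)} = p^{n\delta} \sum_{j=0}^{k-2} p^{j(2\delta-1)},
\]
and the inner sum is $O(1)$ since $\delta < \tfrac{1}{2}$, giving the desired $O(p^{n\delta})$ bound. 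The only mildly delicate step is the algebraic simplification that produces the clean sum $\sum X_i^n \prod (\cdots)$; everything else is a direct substitution into results already established.
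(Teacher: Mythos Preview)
Your proof is correct and follows essentially the same route as the paper's own argument: specialize Lemma~\ref{Gmult} at $s=1-a_1$, observe that the $i=1$ summand vanishes, simplify the prefactor against the $r=i$ and $\ell=1$ factors to obtain $G_{\I}(1-a_1,p^n)=F_n(X_2,\dots,X_k;Z)$ with $Z=p^{-1}X_1^{-1}$, and then invoke Lemma~\ref{comb} and \eqref{Fbound}. The only cosmetic difference is that you isolate the factor $(X_i^n-X_1X_i^{n-1})(1-X_1/X_i)^{-1}$ and simplify it to $X_i^n$ explicitly, whereas the paper carries out the same cancellation in a slightly different order.
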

\begin{proof}
We apply Lemma \ref{Gmult} with $s=1-a_1$ and $x_i=a_i$ for $1 \le i \le k$ 
to obtain
\begin{align*}
  & G_{\I}(1-a_1,p^j) 
   =  (1-p^{-(1-a_1)-a_1}) (1-p^{-(1-a_1)-a_2})  \cdots (1-p^{-(1-a_1)-a_k}) \frac{1}{p-1}  \\
   & \times \sum_{i=1}^{k}
  \frac{ p^{1-a_i j} - p^{(1-a_1)-a_i (j-1)}}{1-p^{-a_i-(1-a_1)}}
  \prod_{\ell \in I \setminus \{ i \}} (1-p^{a_i-a_\ell})^{-1}  \\
  & = \frac{1}{p}  \Big( \prod_{r=2}^{k} (1-p^{-1+a_1-a_r}) \Big)
  \sum_{i=1}^{k} \frac{ p^{1-a_i j}(1-p^{a_i-a_1}) }{ 1-p^{-1+a_1-a_i} }
  \prod_{\ell \in I \backslash \{ i \}} (1-p^{a_i-a_{\ell}})^{-1}. 
\end{align*}
Observe that the $i=1$ term vanishes and 
further simplification yields
\begin{align*}
  G_{\I}(1-a_1,p^j) & =
  \Big( \prod_{r=2}^{k} (1-p^{-1+a_1-a_r}) \Big)
  \sum_{i=2}^{k} \frac{ p^{-a_i j}(1-p^{a_i-a_1}) }{ 1-p^{-1+a_1-a_i} }
  \prod_{\ell \in I \backslash \{ i \}} (1-p^{a_i-a_{\ell}})^{-1} \\
  & = \sum_{i=2}^{k}  p^{-a_i j} 
   \Big( \prod_{r \in I \backslash \{ i, 1 \}} (1-p^{-1+a_1-a_r}) \Big)
  \prod_{\ell \in I \backslash \{ 1, i \}} (1-p^{a_i-a_{\ell}})^{-1} \\
  & =  \sum_{i=2}^{k}  X_i^{ j} 
   \Big( \prod_{r \in I \backslash \{ i, 1 \}} (1-X_r Z) \Big)
  \prod_{\ell \in I \backslash \{ 1, i \}} (1-X_{\ell}/X_i)^{-1},
\end{align*} 
where  $X_i=p^{-a_i}$ and $Z = p^{-1+a_1}$. 
We now let $m=k-1$ and set $Y_1 =X_2, \ldots, Y_m = X_k$ to get
\[
  G_{\I}(1-a_1,p^n) = F_n(X_2, \ldots, X_k; Z),
\]
where $F_n$ is defined in \eqref{Fa}.
By  Lemma \ref{comb}
we immediately obtain \eqref{GIp}, \eqref{GIp2}, and \eqref{GIpn}.  Finally, we note that 
by \eqref{Fbound} it follows that 
\[
  | G_{\I}(1-a_1,p^n) | \ll \sum_{j=0}^{k-2} (p^{\delta})^{j+n} (p^{\delta-1})^j
  \ll p^{n \delta}   \sum_{j=0}^{k-2} (p^{2 \delta-1})^j 
  \ll p^{n \delta}, 
\]
since $\delta < \frac{1}{2}$. 
\end{proof}
With these two lemmas in hand we are ready to prove Proposition \ref{Hid}.

\begin{proof}[Proof of Proposition \ref{Hid}] 
We shall prove the Proposition in the case $i_1=1$ and $i_2=1$.  The general case follows by a permutation of the variables.
 Throughout this proof we let $\sigma =\Re(s)$. 
Using the bound $|c_{q}(r)|  \le (q,r)$ and Lemma \ref{GIk}, we can check that the series 
for $H_{\I,\J;\{ a_1 \} , \{ b_1 \} }(s)$, defined in \eqref{H},
is absolutely convergent 
for  $\Re(s) >1 +2 \delta$ and $\delta < \frac{1}{4}$. Furthermore, 
since $c_{q}(r) = \sum_{d \mid (q,r)} d \mu(\tfrac{q}{d})$ we have
\begin{align*}
 H_{\I,\J;\{ a_1 \} , \{ b_1 \} }(s) & =\sum_{r=1}^{\infty}
      \sum_{q=1}^{\infty} \frac{G_{\I}(1-a_1,q)G_{\J}(1-b_1,q)  }{q^{2-a_1-b_1} r^{a_1+b_1+s}}
      \sum_{d \mid (q,r)} d \mu(\tfrac{q}{d}) 
        =  \sum_{q=1}^{\infty} \alpha_{q} \sum_{r=1}^{\infty} \frac{1}{r^c} 
   \sum_{d \mid q, d \mid r} d \mu(\tfrac{q}{d}), 
\end{align*} 
where 
$\alpha_{q} =  \frac{G_{\I}(1-a_1,q)G_{\J}(1-b_1,q)  }{q^{2-a_1-b_1}}$ and $c=a_1+b_1+s$.
Thus, 
\begin{equation}
\begin{split}
  \label{HIJe1}
    H_{\I,\J;\{ a_1 \} , \{ b_1 \} }(s) & =\sum_{q=1}^{\infty} \alpha_{q} \sum_{d \mid q} d 
    \mu(\tfrac{q}{d}) \sum_{r \ge 1, d \mid r} \frac{1}{r^c} 
  = \sum_{q=1}^{\infty} \alpha_{q} 
  \sum_{d \mid q}  \frac{d \mu(\tfrac{q}{d})}{d^c} \zeta(c) \\
  & = \zeta(c) \sum_{q=1}^{\infty} \alpha_{q} \sum_{d \mid q} d^{1-c} \mu(\tfrac{q}{d}) 
   = \zeta(c) \sum_{q=1}^{\infty} \alpha_{q}  q^{1-c} \sum_{d \mid q} \frac{\mu(d)}{d^{1-c}}.
\end{split}
 \end{equation}
 For prime powers $p^j$  we have $
    \sum_{d \mid p^j} \frac{\mu(d)}{d^{1-c}}
    = 1- \frac{1}{p^{1-c}}$.  By multiplicativity, we have
 \begin{equation}
 \begin{split}
  \label{ellsum} 
  \sum_{q=1}^{\infty} \alpha_{q}  q^{1-c} \sum_{d \mid q} \frac{\mu(d)}{d^{1-c}}
  & = \prod_{p} \Big(
  1 + \sum_{j=1}^{\infty} 
  \frac{G_{\I}(1-a_1,p^j)G_{\J}(1-b_1,p^j)}{(p^j)^{2-a_1-b_1}}
  (p^j)^{1-a_1-b_1-s} ( 1- p^{a_1+b_1+s-1})
  \Big)  \\
&   = \prod_{p} \Big( 
  1 + \sum_{j=1}^{\infty} G_{\I}(1-a_1,p^j)G_{\J}(1-b_1,p^j)
        \cdot \frac{  1-p^{a_1+b_1+s-1} }{ (p^j)^{1+s}}
  \Big) \\
 & =  \prod_{p} \Big( 1 + \sum_{j=1}^{\infty} T_j f_j \Big)
\end{split}
\end{equation}
where we have set 
\begin{equation}
  \label{Tjfj}
  T_j  = G_{\I}(1-a_1,p^j)G_{\J}(1-b_1,p^j)  \text{ and }  f_j  = \frac{  1-p^{a_1+b_1+s-1} }{ (p^j)^{1+s}}
  \text{ for } j \in \mathbb{Z}_{\ge 0}. 
\end{equation}
We now aim to simplify the last expression within the brackets
in \eqref{ellsum}.
At this point it will be convenient to introduce some notation.  Let 
\begin{equation}
  \label{variables}
U=p^{-1},  \ V = p^{-1-s}, \ X_i=p^{-a_i},  \ Y_i = p^{-b_i},  \text{ for } i\in\mathbb{N}. 
\end{equation}
Observe that we have the bounds 
\begin{align}
  \label{variableboundsa}
 &  |V| \le p^{-1- \sigma}, \\
  \label{variableboundsb}
 &  |X_i^{\pm 1}|, |Y_i^{\pm 1}| \le p^{\delta} \text{ for }  i=1,2,\dots.
\end{align}
Notice that 
\begin{equation}
  \label{fjformula}
  f_j =   V^j - X_{1}^{-1} Y_{1}^{-1} U^2 V^{j-1} \text{ for } j \in \mathbb{N}. 
\end{equation}
Given  $I= \{ i_1, \ldots, i_s \} \subset \K = \{ 1, \ldots, k \}$,
and $J = \{ j_1, \ldots, j_t \} \subset \Ll= \{ 1, \ldots, \ell \}$, we put 
\[
  X_I :=  X_{i_1} X_{i_2} \cdots X_{i_s}  \text{ and } Y_J := Y_{j_1}  Y_{j_2} \cdots Y_{j_t}. 
\]
We also define the polynomial rings 
\begin{equation}
\begin{split}
  \label{rings}
  & R_1 = \mathbb{Z}[X_1^{-1},X_2, \ldots, X_k],    \, 
  R_2 = \mathbb{Z}[Y_1^{-1}, Y_2, \ldots, Y_{\ell}],  \\ 
  & \text{and } R = \mathbb{Z}[X_{1}^{-1}, X_2, \ldots, X_k, Y_{1}^{-1}, Y_2, \ldots, Y_{\ell}]. 
 \end{split}
\end{equation}

By \eqref{GIpn} it follows that 
\begin{equation}\label{Tn}
\begin{split}
   T_n & = G_{\I}(1-a_1,p^n)G_{\J}(1-b_1,p^n) 
   =  \Big( \sum_{j=0}^{k-2} \alpha_{n,j} U^j \Big)
   \Big( \sum_{j'=0}^{\ell-2} \beta_{n,j'} U^{j'} \Big)  
    = \sum_{i=0}^{k+\ell-4} A_{n,i}  U^i, 
\end{split}
\end{equation}
where  by Lemma \ref{comb}
\begin{align}
  \label{alphabeta}
  \alpha_{n,j} &=  q_{n,j}(X_2, \ldots, X_k) (X_{1}^{-1})^j\in R_1,\nonumber \\ 
  \beta_{n,j'} &= q_{n,j'}(Y_2, \ldots, Y_{\ell}) (Y_{1}^{-1})^{j'}\in R_2, \nonumber\\
\end{align}
and thus 
\begin{equation}
  \label{Ani} 
     A_{n,i} = \sum_{j+j'=i}  \alpha_{n,j} \beta_{n,j'}\in R.
\end{equation}
In particular, by \eqref{q1j} we have  
\begin{equation}
  \label{A0}
  A_{1,0} := \alpha_{1,0} \beta_{1,0} =  e_1(X_2, \ldots, X_k) \cdot e_1(Y_2, \ldots, Y_{\ell})
  =\sum_{ \substack{ I \subset  \K_1   \\ J \subset \Ll_1 \\ |I|=|J|=1}}
 X_I Y_J,
\end{equation}
where 
\begin{equation*}
    \K_1=\K\setminus\{1\} \text{ and } \Ll_1=\Ll\setminus\{1\}.
\end{equation*}
By \eqref{q2j} we have
\begin{equation}\label{eqn:B0}
\begin{split}
  A_{2,0}  = \alpha_{2,0} \beta_{2,0}  & = (e_1(X_2, \ldots, X_k)^2 
  -e_2(X_2, \ldots, X_k))
  (  e_1(Y_2, \ldots, Y_{\ell})^2 
  -e_2(Y_2, \ldots, Y_{\ell}) ) \\
    & = \Big(  
    \sum_{ \substack{ I \subset  \K_1   \\  |I|=2}}
 X_I +
    \sum_{ \substack{ I \subset  \K_1   \\  |I|=1}}
 X_I^2 
   \Big) \cdot 
    \Big(  
    \sum_{ \substack{ J \subset  \Ll_1   \\  |J|=2}}
 Y_J +
    \sum_{ \substack{ J \subset \Ll_1   \\  |J|=1}}
 Y_J^2 
   \Big) \\
   & =  \sum_{ \substack{ I \subset  \K_1   \\ J \subset \Ll_1 \\ |I|=|J|=2}}
 X_I Y_J +
 \sum_{ \substack{ I \subset  \K_1  \\ J \subset \Ll_1 \\ |I|=2 , |J|=1}}
 X_I Y_J^2
 +\sum_{ \substack{ I \subset  \K_1   \\ J \subset \Ll_1 \\ |I|=1, |J|=2}}
 X_I^2 Y_J
 + \sum_{ \substack{ I \subset  \K_1   \\ J \subset \Ll_1 \\ |I|=|J|=1}}
 X_I ^2Y_J^2.
\end{split}
\end{equation}
Observe that 
$\text{deg}(\alpha_{n,j}) =2j+n$ since $q_{n,j}$ has degree $j+n$
and 
$\text{deg}(\beta_{n,j'}) = 2j'+n$ since $q_{n,j'}$ has degree $j'+n$.
It follows from \eqref{Ani} that $\text{deg}(A_{n,i}) = 2(i+n)$. 
Using this fact, with the bounds  \eqref{variableboundsb} we find that
\begin{equation}
    \label{Anibound}
    |A_{n,i}| \ll p^{2(n+i)\delta}.
\end{equation}
Therefore by \eqref{variables}, \eqref{variableboundsa}, and \eqref{variableboundsb}, we also have 
\begin{equation}
   \label{Aniterms1}
    A_{n,i} U^i V^n \ll p^{-(1-2 \delta) i - (1+\sigma-2 \delta)n},
\end{equation}
and
\begin{equation}
   \label{Aniterms2}
    A_{n,i} X_1^{-1}Y_1^{-1}U^{i+2} V^{n-1} \ll  p^{-(1-2\delta)i-(1+\sigma-2\delta)n+2\delta-1+\sigma}
  \end{equation}
for $n\ge1$ and $i \ge 0$. These bounds will be employed frequently in the sequel.
By \eqref{fjformula} and \eqref{Tn} we have 
\begin{equation}\label{T1f1}
\begin{split}
 T_1 f_1 & =  \Big( \sum_{i=0}^{k+\ell-4} A_{1,i}  U^i \Big) \Big(  V-X_{1}^{-1} Y_{1}^{-1}U^2\Big) 
  = A_{1,0}V+\sum_{i=1}^{k+\ell-4} A_{1,i}  U^i V-  \sum_{i=0}^{k+\ell-4} A_{1,i} X_{1}^{-1}Y_{1}^{-1} U^{i+2}.
\end{split}
\end{equation}

It follows from \eqref{Aniterms1} and \eqref{Aniterms2} that
\begin{equation}
\begin{split}
 \label{oneT1f1}
  1+T_1 f_1 
    &   =  1+A_{1,0} V + O \Big( 
  \sum_{i=1}^{k+\ell-4} 
   p^{-(1-2 \delta) i - (1+\sigma-2 \delta)}
   +  \sum_{i=0}^{k+\ell-4} 
    p^{-(1-2 \delta) (i+2)} \Big) \\
    & =  1+\Big( \sum_{ \substack{ I \subset  \K_1   \\ J \subset \Ll_1 \\ |I|=|J|=1}}
 X_I Y_J \Big) V + O( (p^{-2-\sigma}+ p^{-2})p^{4 \delta}). 
\end{split}
\end{equation}
Next, observe that by \eqref{Tjfj} and \eqref{GIpabd}
\begin{equation}
   \label{Tjbound}
    |T_j| \ll p^{2j \delta}
\end{equation}
and by \eqref{Tjfj}, \eqref{variables}, \eqref{variableboundsa}, and \eqref{variableboundsb}
\begin{equation}
    \label{fjbound}
    |f_j| \ll |V|^j + |X_1^{-1} Y_1^{-1}|p^{-2} |V|^{j-1}
    \ll (p^{-1-\sigma})^j + (p^{-1-\sigma})^{j-1} p^{2\delta-2}.
\end{equation}
Combining these bounds gives
\begin{equation}
    \label{Tjfjgr2}
    \sum_{j=2}^{\infty} T_j f_j \ll 
    \sum_{j=2}^{\infty} p^{2j \delta} ( (p^{-1-\sigma})^j + (p^{-1-\sigma})^{j-1} p^{2\delta-2})
    =\sum_{j=2}^{\infty} (p^{2\delta-1-\sigma})^j(1+p^{\sigma+2 \delta-1})
    \ll 1
\end{equation}
for $\Re(s) \ge 0$. Therefore, from \eqref{T1f1} and \eqref{Tjfjgr2} we deduce, that for $\Re(s) > 2 \delta$, the sum over $q$ in \eqref{ellsum} equals
$
  \prod_{  (i,j) \in \K_1 \times \Ll_1 } \zeta(1+a_i+b_j+s)
 \mathcal{C}_{\I,\J; \{ a_1 \} , \{ b_1 \}}(s),
$
where
\begin{equation*}
    \mathcal{C}_{\I,\J; \{ a_1 \} , \{ b_1 \}}(s)
  = \prod_{p} \mathcal{C}_{\I,\J; \{ a_1 \} , \{ b_1 \}}(p;s),
\end{equation*}
  and
\begin{equation}
\begin{split}
  \label{CIJa1b1ps}
   \mathcal{C}_{\I,\J; \{ a_1 \} , \{ b_1 \}}(p;s) 
&  =   \Big(
 1 + \sum_{j=1}^{\infty} \frac{G_{\I}(1-a_1,p^j)G_{\J}(1-b_1,p^j)}{(p^j)^{1+s}}
    (1-p^{a_1+b_1+s-1})
\Big)\\&\hspace{3em}\times   \prod_{(i,j) \in \K_1 \times \Ll_1} 
 \Big(1- \frac{1}{p^{1+a_i+b_j+s}} \Big).
\end{split}
\end{equation}
Hence,
\begin{align*}
   & H_{\I,\J;\{ a_1 \} , \{ b_1 \} }(s) = \zeta(a_1+b_1+s) 
 \prod_{(i,j) \in \K_1 \times \Ll_1}  
  \zeta(1+a_i+b_j+s)  
 \mathcal{C}_{\I,\J; \{ a_1 \} , \{ b_1 \}}(s).
\end{align*}
We now establish \eqref{CIJexpansion}.  
It is convenient to set $
    \Pi = 
    \prod_{ \substack{ i \in \K_1   \\ j \in \Ll_1} }
    (1 - X_i Y_j V)$.
Observe that 
\begin{equation*}
\begin{split}
    \Pi & 
    =   \sum_{i=0}^{k+\ell-2} g_i V^i,
\end{split}
\end{equation*}
where $g_i \in R$, $\text{deg}(g_i)  =2i$, and $g_0=1$. 
We find that 
\begin{equation*}\label{eqn:g1}
 g_1  = -\sum_{ \substack{ I \subset  \K_1   \\ J \subset \Ll_1 \\ |I|=|J|=1}}
 X_I Y_J =-e_1(X_2, \ldots, X_k) e_1(Y_2, \ldots, Y_{\ell})
 =-A_{1,0}, 
 \end{equation*}
by \eqref{A0}, and
\begin{equation}
   \label{eqn:g2}
  g_2  =   \sum_{ \substack{ I \subset  \K_1   \\ J \subset \Ll_1 \\ |I|=1, |J|=2}}
 X_I^2 Y_J+ \sum_{ \substack{ I \subset  \K_1   \\ J \subset \Ll_1 \\ |I|=2, |J|=1}}
 X_I Y_J^2
 + 2   \sum_{ \substack{ I \subset  \K_1   \\ J \subset \Ll_1 \\ |I|=|J|=2}}
 X_I Y_J.
\end{equation}

From \eqref{CIJa1b1ps} it follows that 
\begin{equation}
  \label{CIJTjfjPi}
  \mathcal{C}_{\I,\J; \{ a_1 \} , \{ b_1 \}}(p;s) = \Big( 1+\sum_{j=1}^{\infty} T_j f_j \Big) \Pi. 
\end{equation}
We now consider $T_2f_2$.  
By \eqref{fjformula} and \eqref{Tn} we have 
\begin{equation}\label{T2f2}
\begin{split}
 T_2 f_2  & =  \Big( \sum_{i=0}^{k+\ell-4} A_{2,i}  U^i \Big) \Big(  V^2 - X_{1}^{-1} Y_{1}^{-1} U^2 V  \Big) 
 \\&= A_{2,0} V^2+ \sum_{i=1}^{k+\ell-4} A_{2,i}  U^i  V^2 -  \sum_{i=0}^{k+\ell-4} A_{2,i}  X_{1}^{-1} Y_{1}^{-1} U^{i+2} V.
\end{split}
\end{equation}
Combining \eqref{T1f1} and \eqref{T2f2} yields
\begin{equation*}
\begin{split}
  1+ T_1 f_1 + T_2 f_2 
  & =1+A_{1,0}V+\sum_{i=1}^{k+\ell-4} A_{1,i}  U^i V-  \sum_{i=0}^{k+\ell-4} A_{1,i} X_{1}^{-1}Y_{1}^{-1} U^{i+2}\\&\hspace{2em}+ 
  A_{2,0} V^2+ \sum_{i=1}^{k+\ell-4} A_{2,i}  U^i  V^2 -  \sum_{i=0}^{k+\ell-4} A_{2,i}  X_{1}^{-1} Y_{1}^{-1} U^{i+2} V.
\end{split}
\end{equation*}
Thus,
\begin{equation}
\begin{split}
  \label{initialsum}
& (1+ T_1 f_1 + T_2 f_2) \Pi =  \\
&
  \Big( 1+ A_{1,0} V + A_{2,0} V^2 + \sum_{i=1}^{k+\ell-4} A_{1,i}  U^i V-  \sum_{i=0}^{k+\ell-4} A_{1,i} X_{1}^{-1}Y_{1}^{-1} U^{i+2}
 \\&\hspace{2em} + \sum_{i=1}^{k+\ell-4} A_{2,i}  U^i  V^2 -  \sum_{i=0}^{k+\ell-4} A_{2,i}  X_{1}^{-1} Y_{1}^{-1} U^{i+2} V
  \Big)   \Big(  1 -A_{1,0} V + g_2 V^2 + \sum_{i=3}^{k+\ell-2} g_i V^i  \Big).
\end{split}
\end{equation}
Observe that 
\begin{equation}
  \label{firstterms}
(1+ A_{1,0} V + A_{2,0} V^2)(  1 -A_{1,0} V + g_2 V^2)
= 1 + (g_2+A_{2,0}-A_{1,0}^2  )V^2  + (A_{1,0} g_2 -A_{1,0}A_{2,0}) V^3 +g_2 A_{2,0} V^4. 
\end{equation}
By expanding out \eqref{initialsum} and using \eqref{firstterms} we find that 
\begin{equation}
\begin{split}
  \label{jle2bounds}
    & (1+ T_1 f_1 +T_2 f_2) \Pi  =1 + c_{02}V^2 + \sum_{\substack{u \ge 2 \\ (u,0) \in S}} c_{u0} U^u + 
    \sum_{\substack{u \ge 1 \\ (u,0) \in S}} c_{u1} U^u V + \sum_{\substack{(u,v) \in S \\  v \ge 2, (u,v) \ne (0,2) }} c_{uv} U^u V^v,
\end{split}
\end{equation}
where $S\subset\mathbb{Z}_{\geq0}\times\mathbb{Z}_{\geq0}$ is finite, $c_{uv} \in R$,
$\text{deg}(c_{uv}) = 2(u+v)$, and $c_{02} = g_2+A_{2,0}-A_{1,0}^2$. 
Here we have made use of the fact that 
$\text{deg}(A_{n,i})=2(i+n)$ and $\text{deg}(g_i)=2i$.   Furthermore, we have
\begin{equation}
   \label{cuvterms}
    c_{uv} U^u V^v \ll p^{2(u+v) \delta-u-(1+\sigma)v}
    = p^{-(1-2 \delta) u - (1+\sigma-2 \delta)v}
    \text{ for } u,v \ge 0. 
\end{equation}
In particular, if $v\geq2$ and $(u,v)\neq(0,2)$, we have 
\begin{equation}
\label{cuvtermsvgeq2}
    c_{uv} U^u V^v \ll p^{-(u+v)\min(1-2 \delta+\sigma,1-2\delta)}\leq p^{-3(1-2 \delta+\min(\sigma,0))}.
 \end{equation}
Given that $S$ is finite, that  $\sigma>-1+2\delta$, and that we certainly have $\delta < \frac{1}{2}$, it follows from \eqref{variables}, \eqref{jle2bounds},  \eqref{cuvterms} and \eqref{cuvtermsvgeq2}
that
\begin{equation}
\begin{split}
   \label{firstparteq}
    (1+ T_1 f_1 +T_2 f_2) \Pi  & =1 + c_{02}V^2 + O \Big(   p^{-2(1-2 \delta)}   +    
    p^{-(1-2 \delta)-(1+\sigma-2\delta)}  
    +p^{-3\left(1-2\delta+\min(\sigma,0)\right)}  \Big)  \\
    & =1+c_{02} V^2 + O \Big( p^{6 \delta+ \max\left(-2,-2-\sigma,-3-3 \min(\sigma,0)\right)} \Big) \\
     & =1+c_{02} V^2+ O \Big( p^{6 \delta +\vartheta(\sigma)} \Big) ,
\end{split}
\end{equation}    
where $\vartheta(\sigma)$ is defined by \eqref{thetasigma}.  Finally, we bound the contribution from $j \ge 3$ to 
\eqref{CIJTjfjPi}.  We make use of
\eqref{Tjbound}, \eqref{fjbound}
and $|\Pi| \le (1+p^{-(1+ \sigma-2 \delta)}  )^{(k-1)(\ell-1)} < 2^{(k-1)(\ell-1)}  $  (since $\sigma > -1+2\delta$)  to obtain 
\begin{equation}
\begin{split}
  \label{jg3bounds}
 &  \Big( \sum_{j=3}^{\infty}  T_j f_j  \Big) \Pi   \ll 
  \sum_{j=3}^{\infty}   p^{2j \delta} \Big(  \Big( \frac{1}{p^{1+ \sigma}} \Big)^{j} +   \frac{p^{2 \delta}}{p^2} 
   \Big( \frac{1}{p^{1+ \sigma}} \Big)^{j-1} \Big)  \\
   & \ll  p^{6 \delta } \Big(  \Big( \frac{1}{p^{1+ \sigma}} \Big)^{3} +   \frac{p^{2 \delta}}{p^2} 
   \Big( \frac{1}{p^{1+ \sigma}} \Big)^{2} \Big)  
       \le  p^{8 \delta} \Big(   \frac{1}{p^{3+3 \sigma}}   +  
    \frac{1}{p^{4+2 \sigma}} \Big)  
     \ll p^{8 \delta} p^{\vartheta(\sigma)}
\end{split}
\end{equation}
for $\sigma \ge -1+2\delta +\e$. 

Next, we simplify the formula for $c_{02}$ in \eqref{firstparteq}.  By \eqref{A0} we have
\begin{equation*}
\begin{split}
A_{1,0}^2 = \sum_{\substack{ I,I' \subset \K_1 \\
J,J' \subset \Ll_1
 }} X_I X_{I'} Y_J Y_{J'}. 
\end{split}
\end{equation*}
We proceed as follows in 4 cases:  (i) $I=I'$, $J=J'$, (ii)  $I\ne I'$, $J=J'$,  (iii) $I= I'$, $J \ne J'$, 
and (iv) $I \ne I'$, $J \ne J'$ to obtain 
\begin{equation*}
A_{1,0}^2 = \sum_{ \substack{ I \subset  \K_1   \\ J \subset \Ll_1 \\ |I|=|J|=1}} X_I^2 Y_J^2
+ 2\sum_{ \substack{ I \subset  \K_1   \\ J \subset \Ll_1 \\ |I|=2 , |J|=1}} X_I Y_J^2
+  2\sum_{ \substack{ I \subset  \K_1   \\ J \subset \Ll_1 \\ |I|=1 , |J|=2}} X_I^2 Y_J
+ 4 \sum_{ \substack{ I \subset  \K_1   \\ J \subset \Ll_1 \\ |I|= |J|=2}} X_I  Y_J.
\end{equation*}

From  \eqref{eqn:B0} and \eqref{eqn:g2}  we see that 
\begin{equation*}
 g_2+A_{2,0} 
 =  3\sum_{ \substack{ I \subset \K_1  \\ J \subset \Ll_1 \\ |I|=|J|=2}}
 X_I Y_J +
 2\sum_{ \substack{ I \subset  \K_1   \\ J \subset \Ll_1 \\ |I|=2 , |J|=1}}
 X_I Y_J^2
 +2\sum_{ \substack{ I \subset  \K_1   \\ J \subset\Ll_1 \\ |I|=1, |J|=2}}
 X_I^2 Y_J
 + \sum_{ \substack{ I \subset \K_1   \\ J \subset \Ll_1 \\ |I|=|J|=1}}
 X_I ^2Y_J^2
\end{equation*}
and it follows that 
\begin{equation}
  \label{c02identity}
 c_{02}= g_2+A_{2,0}-A_{1,0}^2 = -\sum_{ \substack{ I \subset  \K_1   \\ J \subset \Ll_1 \\ |I|=|J|=2}}
 X_I Y_J.
\end{equation}

Combining \eqref{CIJTjfjPi}, \eqref{firstparteq}, \eqref{jg3bounds}, and \eqref{c02identity} we establish 
\eqref{CIJexpansion} along with \eqref{thetasigma}.
Note that from \eqref{thetasigma}, it follows that 
if  $\sigma=\Re(s) = -\frac{1}{2}+2\delta+\e$ with $\e >0$, then \[ \mathcal{C}_{\I,\J; \{ a_1 \} , \{ b_1 \}}(p;s) 
= 1 + O(p^{-1-2 \e}).\] Therefore, $\mathcal{C}_{\I,\J; \{ a_1 \} , \{ b_1 \}}(s)$ is holomorphic and 
absolutely convergent for $\Re(s) > - \frac{1}{2} + 2\delta$.  Furthermore, we see that the poles listed 
in \eqref{Hpoles}  arise from the zeta factors in \eqref{Hfactorization}.
\end{proof}

\section{Technical Lemmas}\label{sec:technical-lemmas}
In this section we establish Lemma \ref{fderivativebd} and  Lemma \ref{Stirling}. 

\begin{proof}[Proof of Lemma \ref{fderivativebd}]
We have 
$$f_{r,M,N}(x,y)=W\left(\frac{x}{M}\right)W\left(\frac{y}{N}\right)\varphi\left(\frac{x}{K}\right)\varphi\left(\frac{y}{K}\right) a_r(y)$$
where 
\begin{equation}
  \label{a}
   a(y)=a_r(y) =  \frac{\widehat{\omega}(\tfrac{1}{2 \pi}\log(1+\tfrac{r}{y}))}{T}
   = \frac{1}{T} \int_{-\infty}^{\infty} \omega(t)   \Big(1+\frac{r}{y} \Big)^{-it}   \, dt 
   ,
\end{equation}
and $1 \le |r| \ll \tfrac{M}{T_0} T^{\varepsilon}$.  First we shall show
\begin{equation}
   \label{avbd}
    y^{v} \frac{d^v}{dy^v} a(y) \ll P^{v} \text{ for } v \ge 0 \text{ where } P= \Big(\frac{T}{T_0} \Big) T^{\e},
\end{equation}
and then deduce the lemma from this bound.  The case $v=0$ is trivial.  Observe that for $v \ge 1$
\begin{equation}
   \label{avformula}
     \frac{d^v}{dy^v}   a(y)  =  
\frac{1}{T} \int_{-\infty}^{\infty}  \omega(t) \frac{d^v}{dy^v}   \Big( 
1+\frac{r}{y} \Big)^{-it}    dt. 
\end{equation}
It is shown in \cite[Equation~8.18]{Ng} that for 
 $v \ge 1$, one has
\begin{equation}
\begin{split}
  \label{derivativeoneplusryit}
\Big| \frac{d^{v}}{dy^{v}} \Big( 1 + \frac{r}{y} \Big)^{-it} \Big| 
& 
\ll_{\nu} \Big( \frac{P}{y} \Big)^{v}
\end{split}
\end{equation}
when $y \asymp N \asymp M$, $t \asymp T$, and 
$1 \le |r|  \ll \frac{M}{T_0} T^{\varepsilon} =o(M)$.  Inserting this last bound in \eqref{avformula} establishes \eqref{avbd}. 

We now deduce the lemma. 
Observe that for $m \ge 0$, we have
\begin{align*}
  \frac{d^m}{dx^m}  W\left(\frac{x}{M}\right)\varphi\left(\frac{x}{K}\right)
  =\sum_{i+j=m} \binom{m}{i}  \frac{d^i}{dx^i} W\left(\frac{x}{M}\right) \frac{d^j}{dx^j} \varphi\left(\frac{x}{K}\right)
  & \ll \sum_{i+j=m} \binom{m}{i}  M^{-i} K^{-j}  \\
  & = \Big( \frac{1}{M}+\frac{1}{K} \Big)^{m} \ll M^{-m}, 
\end{align*}
since $M \le K$. 
Similarly, $ \frac{d^u}{dx^u}  W\left(\frac{y}{N}\right)\varphi\left(\frac{y}{K}\right)  \ll N^{-u}$ for $u \ge 0$.
By the generalized product rule in conjunction with the last two derivative bounds and \eqref{avbd}
\begin{align*}
    f^{(m,n)}(x,y)  & = \frac{d^m}{dx^m} \Big( W\left(\frac{x}{M}\right)\varphi\left(\frac{x}{K}\right) \Big)
   \frac{d^n}{dy^n}  \Big( W\left(\frac{y}{N}\right)\varphi\left(\frac{y}{K}\right)  a(y)  \Big) \\
   & \ll M^{-m} \sum_{u+v=n} \binom{n}{u} \frac{d^u}{dy^{u}} \Big( W\left(\frac{y}{N}\right)\varphi\left(\frac{y}{K}\right)\Big)
   a^{(v)}(y) 
    \ll   M^{-m} \sum_{u+v=n} \binom{n}{u} N^{-u} y^{-v}  P^v \\
  &  \ll M^{-m} N^{-n} 
   \sum_{u+v=n} \binom{n}{u} P^{v} 
    = M^{-m} N^{-n}   (1+P)^n. 
\end{align*}
Therefore $
    x^m y^n f^{(m,n)}(x,y) \ll  P^n$
since $P \ge 1$, $x \asymp M$, and $y \asymp N$.  
 \end{proof}
We now prove Lemma \ref{Stirling}, which makes extensive use of Stirling's formula.

\begin{proof}[Proof of Lemma \ref{Stirling} (i)]
Let $i_1 \in \K$ and $i_2 \in \Ll$. 
We write  
\begin{equation}
  \label{s1s2}
a_{i_1}+s_{1}=\sigma_{1}+iu_{1} \text{ and }  b_{i_2}+s_{2}=\sigma_{2}+iu_{2}.
\end{equation}
  We begin by assuming that $|u_{i}|\leq\sqrt{t}$ for $i=1,2$.  Let $\log z$ be the principal branch of the logarithm, so that $-\pi<\Im(\log z)<\pi$ for $z\in\mathbb{C}\backslash (-\infty,0]$. For $\epsilon>0$ and $0<a\leq1$, we have 
\[\log \Gamma(z+a)= \Big(z+a-\frac12 \Big)\log z-z+\frac12\log(2\pi)+O(|z|^{-1})\] in the sector $\left|\arg(z)\right|\leq\pi-\epsilon$ (see \cite[Section 13.6]{WW}). We have
 \[\log \Gamma\left(\frac{1}{2}-b_{i_2}-s_2+it\right)=-\sigma_{2}\ln(t-u_{2})-\frac{\pi}{2}(t-u_{2})+i\left((t-u_{2})\ln(t-u_{2})-\sigma_{2}\frac{\pi}{2}\right)+O\left(\frac{1}{t-u_{2}}\right),\] and 
 \[\log \Gamma\left(\frac{1}{2}+a_{i_1}+s_1+it\right)=\sigma_{1}\ln(t+u_{1})-\frac{\pi}{2}(t+u_{1})+i\left((t+u_{1})\ln(t+u_{1})+\sigma_{1}\frac{\pi}{2}\right)+O\left(\frac{1}{t+u_{1}}\right).\]
Hence, \begin{align*}&\log \Gamma\left(\frac{1}{2}-b_{i_2}-s_2+it\right)-\log \Gamma\left(\frac{1}{2}+a_{i_1}+s_1+it\right)\\&=(-s_{2}-b_{i_2})\ln(t-u_{2})-(s_{1}+a_{i_1})\ln(t+u_{1})+it(\ln(t-u_{2})-\ln(t+u_{1}))\\&\hspace{2em}+i(u_{1}+u_{2})-i\frac{\pi}{2}(s_{1}+s_{2}+a_{i_1}+b_{i_2})+O\left(\frac{1}{t-u_{2}}\right)+O\left(\frac{1}{t+u_{1}}\right).\end{align*}
Notice that 
\begin{equation}
 \label{ln1}
 \ln(t-u_{2})=\ln t+\ln\left(1-\frac{u_{2}}{t}\right)
 \end{equation}
and 
\begin{equation}
  \label{ln2}
 \ln(t+u_{1})=\ln t+\ln\left(1+\frac{u_{1}}{t}\right).
\end{equation}
Using this observation, we get
\begin{align*}&\log \Gamma\left(\frac{1}{2}-b_{i_2}-s_2+it\right)-\log \Gamma\left(\frac{1}{2}+a_{i_1}+s_1+it\right)\\&=-(s_{1}+s_{2}+a_{i_1}+b_{i_2})\ln t-(s_{2}+b_{i_2})\ln \Big(1-\frac{u_{2}}{t} \Big)-(s_{1}+a_{i_1})\ln \Big(1+\frac{u_{1}}{t} \Big)+it(\ln(t-u_{2})-\ln(t+u_{1}))\\&\hspace{2em}+i(u_{1}+u_{2})-i\frac{\pi}{2}(s_{1}+s_{2}+a_{i_1}+b_{i_2})+O\left(\frac{1}{t-u_{2}}\right)+O\left(\frac{1}{t+u_{1}}\right).\end{align*}
Exponentiating both sides of the above equation yields
\begin{align*}
\frac{\Gamma(\frac12-b_{i_2}-s_{2}+it)}{\Gamma(\frac12+a_{i_1}+s_{1}+it)}&=t^{-(s_{1}+s_{2}+a_{i_1}+b_{i_2})}\exp\left(-i\frac{\pi}{2}(s_{1}+s_{2}+a_{i_1}+b_{i_2})\right)\exp\left(it(\ln(t-u_{2})-\ln(t+u_{1}))\right)\\&\hspace{2em}\times\exp\left(i(u_1+u_2)\right)\exp\left(-(s_{2}+b_{i_2})
\ln \Big(1-\frac{u_{2}}{t} \Big)\right)\exp\left(-(s_{1}+a_{i_1})\ln \Big(1+\frac{u_{1}}{t} \Big)\right)\\&\hspace{2em}\times\exp\left(O\left(\frac{1}{t-u_{2}}\right)+O\left(\frac{1}{t+u_{1}}\right)\right).
\end{align*}
Notice that \eqref{ln1} and \eqref{ln2} imply 
 \[it(\ln(t-u_{2})-\ln(t+u_{1}))+i(u_{1}+u_{2})=O\left( \frac{u_{1}^2+u_{2}^2}{t}\right).\]
Moreover, we have \[-(s_{2}+b_{i_2})\ln \Big(1-\frac{u_{2}}{t} \Big)-(s_{1}+a_{i_1})\ln \Big(1+\frac{u_{1}}{t} \Big)=-(s_{2}+b_{i_2})O\left(\frac{u_{2}}{t}\right)-(s_{1}+a_{i_1})O\left(\frac{u_{1}}{t}\right)=O\left(\frac{u_{1}^2+u_{2}^2}{t}\right),\]
and
\[\exp\left(O\left(\frac{u_{1}^2+u_{2}^2}{t}+\frac{1}{t}\right)\right)=1+O\left(\frac{u_{1}^2+u_{2}^2+1}{t}\right)=1+O\left(\frac{|s_{1}|^2+|s_{2}|^2+1}{t}\right).\]
Therefore, when $|s_{1}|^2+|s_{2}|^2\ll t$, we have 
\begin{equation*}
\frac{\Gamma(\frac12-b_{i_2}-s_{2}+it)}{\Gamma(\frac12+a_{i_1}+s_{1}+it)}=t^{-(s_{1}+s_{2}+a_{i_1}+b_{i_2})}\exp\left(-i\frac{\pi}{2}\left(s_{1}+s_{2}+a_{i_1}+b_{i_2}\right)\right)\left(1+O\left(\frac{|s_{1}|^2+|s_{2}|^2+1}{t}\right)\right).
\end{equation*}

When $|s_1|^2+|s_2|^2 \gg t$, the $O$ term in \eqref{ratio-gamma-0} becomes larger than 1, and so the dominant term becomes
\[
   \frac{|s_1|^2+|s_2|^2}{t}  t^{-s_1-s_2-a_{i_1}-b_{i_2}} \exp \Big(i\frac{\pi}{2}(-s_1-s_2-a_{i_1}-b_{i_2})   \Big) .
\]
Hence, it suffices to establish for $|s_1|^2+|s_2|^2 \gg t$ that 
\[
     \Big| \frac{\Gamma(\frac{1}{2}-b_{i_2}-s_2+it)}{\Gamma(\frac{1}{2}+a_{i_1}+s_1+it)} \Big|
     \ll \frac{|s_1|^2+|s_2|^2}{t^{1+\Re(s_1+s_2+a_{i_1}+b_{i_2})}} \Big|   e^{\frac{\pi}{2}\left( \Im(s_1+s_2+a_{i_1}+b_{i_2})\right)}
     \Big|,
\]
assuming 
\begin{equation}
   \label{sigmaconditions}
   \sigma_1 \in (0,A-\tfrac12], \, \sigma_2 \in (0, \tfrac{1}{2}-\eta_0]\cup[\tfrac12+\eta_0,\tfrac32-\eta_0], 
   \text{ and } \sigma_1 + \sigma_2 \le 1. 
\end{equation}
Here $\eta_0\in (0,\frac12)$ and $A>0$ are fixed constants. Since we assume that $\sigma_1$ and $\sigma_2$ are in  bounded intervals, then $|s_1| \asymp \left|u_1\right|$ and $|s_2|\asymp \left|u_2\right|$. We proceed to prove the asymptotic bound
\begin{equation}
 \label{asymptoticbound}
   \Big| \frac{\Gamma(\frac{1}{2}-b_{i_2}-s_2+it)}{\Gamma(\frac{1}{2}+a_{i_1}+s_1+it)} \Big| 
    \ll  \frac{u_1^2+u_2^2}{t^{1+\sigma_1+\sigma_2}}   e^{\frac{\pi}{2} (u_1+u_2)}
    \text{ where }  \sqrt{t}\leq |u_1|, |u_2| \leq t+1. 
\end{equation}
We write the interval $[\sqrt{t},t+1]=I_1 \cup I_2 \cup I_3$ where 
\begin{equation}
  \label{intervals}
  I_1 = [\sqrt{t},\tfrac{t}{2}], \ 
  I_2 = [\tfrac{t}{2},t-1], \
  I_3 = [t-1,t+1]. 
\end{equation}
There are nine cases according to $|u_1| \in I_i, |u_2| \in I_j$
with 
\begin{equation*}
  \label{cases}
(i,j) \in \Big\{ (1,1), (1,2),(1,3), (2,1),(2,2),(2,3),(3,1),(3,2),(3,2) \Big\}.
\end{equation*} 
Recall the Stirling estimate
\begin{equation}
  \label{Stirlingasym}
  |\Gamma(\s+iu)| =\sqrt{2 \pi} |u|^{\s-1/2}e^{-\frac{1}{2} \pi  |u|} (1+O(u^{-1}))
  \text{ for } 0<\sigma\ll 1 \text{ and } |u| \ge 1. 
\end{equation}
Note that we also have the bounds 
\begin{align}
   \label{gammaO1}
     |\Gamma(\sigma+iu)|   \ll_{\eta_0, A} 1  \text{ for }   |u| \le 1, \sigma\in[-1+\eta_0,-\eta_0]\cup[\eta_0,A],
\end{align}
and 
\begin{equation}
     \label{gammam1O1}
     |\Gamma(\sigma+iu)|^{-1}  \ll_{A} 1  \text{ for }   -A \le \sigma \le A, \, |u| \le 1 ,
\end{equation}
where $\eta_0\in (0,\frac12)$ and $A>0$ are fixed constants.  The last two bounds follow from the facts that $\Gamma(s)$ 
is holomorphic on and within the given region and $\Gamma(s)^{-1}$ is entire. 
Thus, we obtain
\begin{equation}
    \label{gammanum}
     | \Gamma(\tfrac{1}{2}-b_{i_2}-s_2+it) | 
         \ll
        \begin{cases}
       |t-u_2|^{-\sigma_2} e^{-\frac{\pi}{2}|t-u_2|  } & \text{ if }  |t-u_2|\geq1, \\
       1 & \text{ if } |t-u_2|\leq 1,\; \sigma_2 \in [\tfrac12-A,\tfrac12-\eta_0]\cup[\tfrac12+\eta_0,\tfrac32-\eta_0]     \end{cases}
\end{equation}
and
\begin{equation}
  \label{gammaden}
     |\Gamma(\tfrac{1}{2}+a_{i_1}+s_1+it) |^{-1} 
       \ll
        \begin{cases}
       |t+u_1|^{-\sigma_1} e^{\frac{\pi}{2}|t+u_1|}&  \text{ if }  |t+u_1|\geq1, \\
       1 & \text{ if } |t+u_1| \ll 1, \sigma_1 \in [-A-\tfrac12,A-\tfrac12].
      \end{cases}
\end{equation}
We provide full details for Cases (i), (v), (vi), and (ix) and note that the 
remaining cases are treated similarly. \\
\noindent Case (i): $(i,j)=(1,1)$.   
It may be checked that the conditions $\sqrt{t} \le |u_1| \le \frac{t}{2}$ and $ \sqrt{t} \le |u_2| \le \frac{t}{2}$ imply
$t+u_1, t-u_2 \in [t-\sqrt{t}, \tfrac{3t}{2}]$. 
Thus
\begin{align*}
      \Big|\frac{\Gamma(\frac{1}{2}-b_{i_2}-s_2+it)}{\Gamma(\frac{1}{2}+a_{i_1}+s_1+it)} \Big| 
      & \asymp  \frac{ 1 }{(t-u_2)^{\s_2}(t+u_1)^{\s_1}  }
      \frac{e^{-\frac{\pi}{2}(t-u_2)  } }{ e^{-\frac{\pi}{2}(t+u_1)} }
     =  \frac{ e^{\frac{\pi}{2}(u_1+u_2)} }{(t-u_2)^{\s_2}(t+u_1)^{\s_1}  }    \ll\frac{(u_1^2+u_2^2) e^{\frac{\pi}{2}(u_1+u_2)} }{t^{1+\s_1+\s_2}}.
\end{align*}
\noindent Case (v): $(i,j)=(2,2)$.  In this case, we have  $\frac{t}{2} \le |u_1|, |u_2| \le t-1$.  
This implies that $t+u_1 \geq 1$ and $t-u_2 \geq1$.  Thus it follows that 
\begin{align*}
      \Big|\frac{\Gamma(\frac{1}{2}-b_{i_2}-s_2+it)}{\Gamma(\frac{1}{2}+a_{i_1}+s_1+it)} \Big| 
      & \asymp  \frac{ 1 }{(t-u_2)^{\s_2}(t+u_1)^{\s_1}  }
      \frac{e^{-\frac{\pi}{2}(t-u_2)  } }{ e^{-\frac{\pi}{2}(t+u_1)} }
     =  \frac{ e^{\frac{\pi}{2}(u_1+u_2)} }{(t-u_2)^{\s_2}(t+u_1)^{\s_1}  }    \\
     &\ll  \frac{(u_1^2+u_2^2)}{t^2} 
     \frac{ e^{\frac{\pi}{2}(u_1+u_2)} }{(t-u_2)^{\s_2}(t+u_1)^{\s_1}}, 
\end{align*}
since $u_1^2 +u_2^2 \asymp t^2$. 
By considering cases, we find that  
\begin{equation*}
     \frac{ 1 }{(t-u_2)^{\s_2}(t+u_1)^{\s_1}  }
     \ll \begin{cases}
      t^{-\s_1}   & \text{ if } u_1, u_2 >0, \\
     t^{-\s_1-\s_2} & \text{ if } u_1>0, u_2 <0, \\
     1 & \text{ if } u_1 <0, u_2 >0, \\
     t^{-\s_2} & \text{ if } u_1, u_2 <0
     \end{cases}
\end{equation*}
and thus 
\begin{equation*}
      \Big|\frac{\Gamma(\frac{1}{2}-b_{i_2}-s_2+it)}{\Gamma(\frac{1}{2}+a_{i_1}+s_1+it)} \Big| 
      \ll \begin{cases}
      \frac{(u_1^2+u_2^2)}{t^{2+\s_1}} 
      e^{\frac{\pi}{2}(u_1+u_2)}     & \text{ if } u_1, u_2 >0, \\
      \frac{(u_1^2+u_2^2)}{t^{2+\s_1+\s_2}} 
      e^{\frac{\pi}{2}(u_1+u_2)}    & \text{ if } u_1>0, u_2 <0, \\
      \frac{(u_1^2+u_2^2)}{t^{2}} 
      e^{\frac{\pi}{2}(u_1+u_2)}   & \text{ if } u_1 <0, u_2 >0, \\
     \frac{(u_1^2+u_2^2)}{t^{2+\s_2}} 
      e^{\frac{\pi}{2}(u_1+u_2)}   & \text{ if } u_1, u_2 <0
      \end{cases}
\end{equation*}
Observe that the assumptions $\s_1+\s_2 \le 1$ and $\sigma_i\geq0$ imply $t^{-2-\s_i}, t^{-2} \le t^{-1-\s_1-\s_2}$ for $i=1,2$. 
Inserting these bounds implies \eqref{asymptoticbound}.

\noindent Case (vi): $(i,j)=(2,3)$. In this case, $\tfrac{t}{2} \le |u_1| \le t-1$ and $t-1 \le |u_2| \le t+1$.  
We demonstrate the proof in the case  $u_1<0$ and $u_2>0$ in which case $1<t+u_1<\frac{t}{2}$ and $-1<t+u_2<1$.  
It follows from \eqref{gammanum} and \eqref{gammaden} that 
\begin{align*}
      \Big|\frac{\Gamma(\frac{1}{2}-b_{i_2}-s_2+it)}{\Gamma(\frac{1}{2}+a_{i_1}+s_1+it)} \Big| 
      &
     \ll  \frac{ e^{\frac{\pi}{2}(t+u_1)} }{(t+u_1)^{\s_1} }    \ll\frac{(u_1^2+u_2^2) e^{\frac{\pi}{2}(u_1+u_2)} }{t^{2}}  \ll\frac{(u_1^2+u_2^2) e^{\frac{\pi}{2}(u_1+u_2)} }{t^{1+\sigma_1+\sigma_2}} ,
\end{align*}
where the last step holds 
provided that $\sigma_1+\sigma_2\leq 1$. The other cases are similar.


\noindent Case (ix): $(i,j)=(3,3)$.
In this case, we have  $t-1 \le |u_1|, |u_2| \le t+1$.   Splitting into cases, depending on the sign of $u_i$ we find that 
\begin{equation}
\begin{split}
  \label{cases1}
 & \text{if }u_1, u_2 > 0, \text{ then } t+u_1 \in [2t-1,2t+1], t-u_2 \in [-1,1] ; \\
  & \text{if }u_1 <0, u_2 > 0, \text{ then } t+u_1, t-u_2 \in [-1,1] ; \\
  & \text{if }u_1 >0, u_2 < 0, \text{ then } t+u_1, t-u_2  \in [2t-1,2t+1] ; \\
  & \text{if }u_1, u_2 < 0, \text{ then } t+u_1 \in [-1,1], t-u_2 \in [2t-1,2t+1]. 
\end{split}
\end{equation}
It follows from the above cases \eqref{cases1} and  \eqref{gammanum} and \eqref{gammaden} that 
\begin{align*}
      \Big|\frac{\Gamma(\frac{1}{2}-b_{i_2}-s_2+it)}{\Gamma(\frac{1}{2}+a_{i_1}+s_1+it)} \Big|
 \ll
 \begin{cases}
 \frac{e^{\frac{\pi}{2}\left(t+u_1\right)}}{(t+u_1)^{\sigma_1}} & \text{ if } u_1,u_2 >0, \\
 1 & \text{ if } u_1 <0, u_2 >0, \\
  \frac{ e^{\frac{\pi}{2}(u_1+u_2)} }{(t-u_2)^{\s_2}(t+u_1)^{\s_1}  }  & 
  \text{ if } u_1 >0, u_2 <0, \\
  \frac{e^{\frac{\pi}{2}\left(-t+u_2\right)}}{(t-u_2)^{\sigma_2}} 
& \text{ if } u_1, u_2 <0.
 \end{cases}
\end{align*}
In this case we have $u_1^2+u_2^2 \asymp t^2$.  Using this along with 
 the size conditions  \eqref{cases1} on $t+u_1$ and $t-u_2$ in the various cases, and the assumptions
$\s_1 +\s_2 \le 1$, $\s_1 \geq0$, and $\s_2 \geq0$, it follows that  \eqref{asymptoticbound} holds.

This completes the proof of \eqref{asymptoticbound}. 
To complete the proof of lemma we must establish \eqref{ratio-gamma-0} in the case that  $\sqrt{t}\leq |u_1|\leq t+1$ and $ |u_2|\leq\sqrt{t}$ or $\sqrt{t}\leq |u_2|\leq t+1$ and $ |u_1|\leq\sqrt{t}$.  The first case is dealt with by considering 
three subcases, depending on whether $|u_1|$ lies in the interval $[\sqrt{t},\tfrac{t}{2}], [\tfrac{t}{2},t-1],$ or $[t-1, t+1]$. 
The second case,  $\sqrt{t}\leq |u_2|\leq t+1$ and $ |u_1|\leq\sqrt{t}$ is treated in the same way and in each case we 
obtain the desired bound
\[
   \Big| \frac{\Gamma(\frac{1}{2}-b_{i_2}-s_2+it)}{\Gamma(\frac{1}{2}+a_{i_1}+s_1+it)} \Big| 
    \ll  \frac{u_1^2+u_2^2}{t^{1+\sigma_1+\sigma_2}}   e^{\frac{\pi}{2} (u_1+u_2)}.
\]
This completes the proof of part (i) of the lemma. 
\end{proof}

\begin{proof}[Proof of Lemma \ref{Stirling} (ii)]

We use the notation from the previous part and consider the following cases:
(i) $  |u_1| \ge t+1$ and  $  |u_2| \ge t+1$, (ii) $|u_1| \ge t+1$ and  $  |u_2| \le t+1$, 
and (iii) $  |u_1| \le t+1$ and  $  |u_2| \ge t+1$. We provide a proof in the first case
and just observe that cases (ii) and (iii) can be treated similarly. 
Note that we have the following inequalities
\begin{equation}
\begin{split}
     \label{cases2}
 & \text{if }u_1, u_2 > 0, \text{ then } t+u_1\geq 2t+1  \text{ and } t-u_2\leq -1; \\
  & \text{if }u_1 <0, u_2 > 0, \text{ then } t+u_1\leq -1 \text{ and } t-u_2\leq -1; \\
  & \text{if }u_1 >0, u_2 < 0, \text{ then } t+u_1\geq 2t+1\text{ and } t-u_2\geq2t+1; \\
  & \text{if }u_1, u_2 < 0, \text{ then } t+u_1\leq-1 \text{ and } t-u_2\geq2t+1.
\end{split}
\end{equation}
Using these facts, it follows from \eqref{gammanum} and \eqref{gammaden} that 
\begin{equation}
\begin{split}
   \label{Gammafracbd}
      \Big|\frac{\Gamma(\frac{1}{2}-b_{i_2}-s_2+it)}{\Gamma(\frac{1}{2}+a_{i_1}+s_1+it)} \Big| 
      \ll
      \begin{cases}
     \frac{ e^{\frac{\pi}{2}(2t+u_1-u_2)} }{|t-u_2|^{\s_2}|t+u_1|^{\s_1}  }   & \text{ if }  u_1,u_2 >0, \\
      \frac{ e^{-\frac{\pi}{2}(u_1+u_2)  } }{ |t-u_2|^{\sigma_2}|t+u_1|^{\sigma_1}  }
      & \text{ if }  u_1 <0,u_2 >0, \\
       \frac{ e^{\frac{\pi}{2}(u_1+u_2)  } }{ |t-u_2|^{\sigma_2}|t+u_1|^{\sigma_1}  } & \text{ if } u_1 >0, u_2 <0, \\
        \frac{ e^{\frac{\pi}{2}(-2t+u_2-u_1)  } }{ |t-u_2|^{\sigma_2}|t+u_1|^{\sigma_1}  }
      &  \text{ if } u_1, u_2 <0.
      \end{cases}
\end{split}
\end{equation}
By the conditions \eqref{cases2} it may be checked that each numerator on the right hand side of \eqref{Gammafracbd}
is bounded by $e^{\frac{\pi}{2}|u_1+u_2|}$. Since $|u_1|^2+|u_2|^2\geq t^2$ and $|t+u_1|, |t-u_2|\geq1$ in all cases,
we get
 \[ \Big|\frac{\Gamma(\frac{1}{2}-b_{i_2}-s_2+it)}{\Gamma(\frac{1}{2}+a_{i_1}+s_1+it)} \Big|\ll \frac{|u_1|^2+|u_2|^2}{t^2}e^{\frac{\pi}{2}|u_1+u_2|}, \] as desired.
\end{proof}


\begin{thebibliography}{1}

\bibitem{Ar}
F. Aryan, {\it Binary and quadratic divisor problems}, Int. J. Number Theory 13 (2017), no. 6, 1457-1471.



\bibitem{At}
F.V. Atkinson, 
{\em The mean value of the zeta-function on the critical line}, 
Proc. London Math. Soc. (2) 47, (1941), 174-200. 

\bibitem{BCHB}
R. Balasubramanian, J.B. Conrey, and D.R. Heath-Brown,  {\it 
Asymptotic mean square of the product of the Riemann zeta-function and a Dirichlet polynomial},
J. Reine Angew. Math. 357 (1985), 161-181. 








\bibitem{CGh} 
J.B. Conrey and A. Ghosh, {\it A conjecture for the sixth power moment of the Riemann zeta-function}, Internat. Math. Res. Notices 15 
(1998) 775-780.

\bibitem{CG}
J. B. Conrey and S. M. Gonek, {\em High moments of the Riemann zeta-function}, Duke Math. J. 107 (2001), 577--604. 


\bibitem{CK1} J.B. Conrey and J. P. Keating, {\it Moments of zeta and correlations of divisor-sums: I},  Phil. Trans. R. Soc. A373, 20140313. 

\bibitem{CK2} J.B. Conrey and J. P. Keating, {\it Moments of zeta and correlations of divisor-sums: II}, 
Advances in the theory of numbers, 75-85, Fields Inst. Commun., 77, Fields Inst. Res. Math. Sci., Toronto, ON, 2015.



\bibitem{CK3} J.B. Conrey and J. P. Keating, {\it Moments of zeta and correlations of divisor-sums: III}, Indag. Math. (N.S.) 26 (2015), no. 5, 736-747.
  
\bibitem{CK4} 
 J.B. Conrey and J. P. Keating,
{\it Moments of zeta and correlations of divisor-sums: IV},  Res. Number Theory 2 (2016), Art. 24, 24 pp. 

\bibitem{CK5}
J.B. Conrey and J. P. Keating, {\it 
Moments of zeta and correlations of divisor-sums: V}, Proc. Lond. Math. Soc. (3) 118 (2019), no. 4, 729-752.


\bibitem{CFKRS} 
 J. B. Conrey and  D. W. Farmer and  J. P. Keating and
               M. O. Rubinstein, and  N. C. Snaith,
{\it Integral moments of {$L$}-functions}, Proc. London Math. Soc. (3) 91 (2005), no. 1, 33--104. 

\bibitem{Co}
E.T.	Copson, {\it An introduction to the theory of functions of a complex variable}, 
Oxford University Press, 1935, 	448 pp. 

 
\bibitem{Dr}
S. Drappeau, {\it Sums of Kloosterman sums in arithmetic progressions, and the error term in the dispersion method},
Proc. Lond. Math. Soc. (3) 114 (2017), no. 4, 684-732.
 
 \bibitem{DFI}
W. Duke, J.B. Friedlander, and H. Iwaniec, 
{\em A quadratic divisor problem}, 
Invent. Math. 115 (1994), no. 2, 209--217. 
 
\bibitem{FK} 
L. Faber, H. Kadiri, {\it New bounds for $\psi(x)$}, Math. Comp. 84 (2015), no. 293, 1339-1357.
 
\bibitem{FGH}
D. 
Farmer, S.M. Gonek, C.P. Hughes, {\it
The maximum size of $L$-functions}. 
J. Reine Angew. Math. 609 (2007), 215-236. 
 
\bibitem{GG}
 D.A. Goldston and S.M. Gonek, {\it Mean value theorems for long Dirichlet polynomials and tails of Dirichlet series}, Acta Arith. 84 (1998).
 


\bibitem{H}
G. Harcos, {\em An additive problem in the Fourier coefficients of cusp forms},  
Math. Ann.  326  (2003), no. 3, 347--365.


\bibitem{HB}
D. R. Heath-Brown, {\em The fourth power moment of the Riemann zeta function},  
Proc. London Math. Soc. (3)  38  (1979), no. 3, 385--422.


\bibitem{HL} 
G.H. Hardy and J.E. Littlewood, {\em Contributions to the theory of the Riemann zeta function and
the theory of the distribution of primes}, Acta. Math. {\bf 41} (1918), 119-196.  

 \bibitem{HN}
 A. Hamieh and N. Ng, {\em Mean values of long Dirichlet polynomials with divisor coefficients}, preprint. 

\bibitem{HR}
 G. A. Hiary, M.O. Rubinstein, {\it Uniform asymptotics for the full moment conjecture of the Riemann zeta function}. J. Number Theory 132 (2012), no. 4, 820-868. 


 \bibitem{HY}
C.P. Hughes and Matthew P. Young, 
{\it The twisted fourth moment of the Riemann zeta function},  J. Reine Angew. Math. 641 (2010), 203-236. 

\bibitem{In}
A.E. Ingham, {\em Mean-value theorems in the theory of the Riemann zeta function}, Proc. Lond. Math.
Soc. {\bf 27} (1926), 273-300.


\bibitem{ivic} 
 A. Ivi\'{c},  {\em The Riemann zeta-function. Theory and applications}, Reprint of the 1985 original [Wiley, New York; MR0792089], Dover Publications, Inc., Mineola, NY, 2003. 

\bibitem{Iv1} A. Ivi\'{c},  {\em Lectures on mean values of the Riemann zeta function},
Tata Institute of Fundamental 
Research Lectures on Mathematics and Physics, 82. Published for the Tata Institute of Fundamental Research, Bombay; by Springer-Verlag, Berlin, 1991.
Tata Institute Lectures Notes 82, Springer-Verlag, 1991.

\bibitem{Iv2}
A. Ivi\'{c}, {\em 
On the ternary additive divisor problem and the sixth moment of the zeta-function}, in Sieve methods, exponential sums, and their applications in number theory (Cardiff, 1995), 205--243,
London Math. Soc. Lecture Note Ser., 237, Cambridge Univ. Press, Cambridge, 1997. 

\bibitem{Iv3} A. Ivi\'{c}, {\em The general additive divisor problem and moments of the zeta-function}, in New Trends in Probability and Statistics, Vol. 4: Analytic and Probabilistic Methods in Number Theory (Palanga, Lithuania, 1996), VSP, Utrecht, Netherlands, 1997, 69--89.
Lectures on mean values of the Riemann zeta function.
\bibitem{Iwaniec-spectral} 
 H. Iwaniec,  {\em Spectral Methods of Automorphic Forms}, Studies in Mathematics, Vol. 53, 2nd edn. American Mathematical Society, Providence, 2002.

\bibitem{KS}
J. P. Keating and N. C. Snaith, {\it Random matrix theory and $\zeta(\tfrac{1}{2}+it)$}, Comm. Math. Phys. 214 (2000) 57-89.  
 
\bibitem{MV}
H. L. 
Montgomery and R. C. Vaughan, {\it  Hilbert's inequality}, J. London Math. Soc. (2) 8 (1974), 73-82. 


 \bibitem{Ng}
N. Ng, 
{\em The sixth moment of the Riemann zeta function and ternary additive divisor sums}. Discrete Anal. 2021, Paper No. 6, 60 pp. 
 

\bibitem{NSW} 
N.~Ng, Q.~Shen, and P.J.~Wong, {\it The eighth moment of the Riemann zeta function}, preprint,
\url{https://arxiv.org/abs/2204.13891}.

\bibitem{NT}
N. Ng, M. Thom, {\it 
Bounds and conjectures for additive divisor sums},
Funct. Approx. Comment. Math. 60 (2019), no. 1, 97-142. 

\bibitem{PK}
R.B.~Paris, D.~Kaminski, {\it Asymptotics and Mellin-Barnes integrals}. Encyclopedia of Mathematics and its Applications, 85. Cambridge University Press, Cambridge, 2001. xvi+422 pp.

\bibitem{S}
K.~Soundararajan, {\it The distribution of values of $L$-functions}, preprint, available at 
\url{https://arxiv.org/pdf/2112.03389.pdf}.

\bibitem{Ti}
E.C. Titchmarsh, {\it The theory of the Riemann zeta-function. Second edition}, Edited and with a preface by D. R. Heath-Brown. The Clarendon Press, Oxford University Press, New York, 1986. 412 pp.

\bibitem{T1}
B. Topacogullari,  {\it The shifted convolution of divisor functions}  Q. J. Math. 67 (2016), no. 2, 331-363. 


\bibitem{T2}
B. Topacogullari, {\it On a certain additive divisor problem},  Acta Arith. 181 (2017), no. 2, 143-172. 


\bibitem{WW}
E.T. Whittaker, G.N. Watson, {\it A course of modern analysis},  4th edition, Cambridge University Press, 1996.



\end{thebibliography}
\end{document}